\numberwithin{equation}{section}
\newtheorem{theorem}{Theorem}[section]
\newtheorem{proposition}{Proposition}[section]
\newtheorem{lemma}{Lemma}[section]
\theoremstyle{definition}
\newtheorem{remark}{Remark}[section]
\newtheorem{definition}{Definition}[section]
\newtheorem{example}{Example}[section]
\newtheorem{assumption}{Assumption}
\newcommand{\AAA}{{\cal A}}
\newcommand{\s}{\sigma}
\newcommand{\FF}{{\cal F}}
\newcommand{\EE}{{\cal E}}
\newcommand{\ie}{{\em i.e.}, }
\newcommand{\eg}{{\em e.g.}, }
\newcommand{\cf}{\emph{cf.\ }}
\newcommand{\ksta}{k_{\footnotesize \rm statio}}
\newcommand{\kinfo}{k_{\footnotesize \rm info}}
\newcommand{\nn}{\mathbb{N}} 
\newcommand{\rr}{\mathbb{R}} 
\newcommand{\R}{\mathbb{R}} 
\newcommand{\Zone}{\mathcal{Z}} 
\newcommand{\real}{\mathbb{R}} 
\newcommand{\Lag}{\mathcal{L}} 
\newcommand{\Laug}{\mathcal{L}^{\sharp}} 
\newcommand{\norm}[1]{\left\Vert {#1} \right\Vert} 
\newcommand{\erl}{\left(-\infty , +\infty\right]} 
\newcommand{\dom}[1]{\mathrm{dom}\,{#1}} 
\newcommand{\rank}{\mathrm{rank}}
\newcommand{\crit}[1]{\mathrm{crit}\,{#1}}
\newcommand{\dist}{\mathrm{dist}} 
\newcommand{\act}[1]{\left\langle {#1} \right\rangle} 
\newcommand{\seq}[2]{\left\{{#1}_{{#2}}\right\}_{{#2} \in \mathbb{N}}}
\newcommand{\Seq}[2]{\left\{{#1}^{{#2}}\right\}_{{#2} \in \mathbb{N}}}
\newcommand{\argmin}{\mathrm{argmin}}
\definecolor{lblue}{rgb}{0.8,0.85,1.00}
\definecolor{myblue}{rgb}{.8, .8, 1}
\title{Nonconvex Lagrangian-Based Optimization: Monitoring Schemes and Global Convergence}
\author{J\'{e}r\^{o}me Bolte\footnote{TSE  (Universit\'{e} Toulouse I), Manufacture des Tabacs, 21 all\'{e}e de Brienne, 31015 Toulouse, France. E-mail: jerome.bolte@tse-fr.eu.} \and Shoham Sabach\footnote{Faculty of Industrial Engineering, The Technion, Haifa, 32000, Israel. E-mail: ssabach@ie.technion.ac.il.}\and Marc Teboulle\footnote{School of Mathematical Sciences, Tel-Aviv University, Ramat-Aviv 69978, Israel. E-mail: teboulle@post.tau.ac.il.}}
\date{{\em Accepted for publication in ``Mathematics of Operations Research", August 27, 2017}}
\begin{document}
\maketitle

\begin{abstract}
We introduce a novel approach addressing global analysis of a difficult class of nonconvex-nonsmooth optimization problems within the important framework of Lagrangian-based methods. This genuine nonlinear class captures many problems in modern disparate fields of applications. It features complex geometries, qualification conditions, and other regularity properties do not hold everywhere. To address these issues we work along several research lines to develop an original general Lagrangian methodology which can deal, all at once, with the above obstacles. A first innovative feature of our approach is to introduce the concept of Lagrangian sequences for a broad class of algorithms. Central to this methodology is the idea of turning an arbitrary descent method into a multiplier method. Secondly, we provide these methods with a transitional regime allowing us to identify in finitely many steps a zone where we can tune the step-sizes of the algorithm for the final converging regime. Then, despite the min-max nature of Lagrangian methods, using an original Lyapunov method we prove that each bounded sequence generated by the resulting monitoring schemes are globally convergent to a critical point for some fundamental Lagrangian-based methods in the broad semialgebraic setting, which to the best of our knowledge, are the first of this kind.
\end{abstract}

\section{Introduction.}
	Consider the following nonconvex and nonlinear composite minimization problem
	\begin{equation*}
		\text{(CM)} \qquad \mbox{minimize} \left\{ f\left(x\right) \equiv f_{0}\left(x\right) + h\left(F
		\left(x\right)\right) : \, x \in \rr^{n} \right\},
	\end{equation*}
	where
	\begin{itemize}
		\item $f_{0} : \rr^{n} \rightarrow \rr$ is a continuously differentiable function.
 		\item $F : \rr^{n} \rightarrow \rr^{m}$ ($m \leq n$) is a continuously differentiable mapping
 			defined by
 			\begin{equation*}
 				F\left(x\right) := \left(f_{1}\left(x\right) , f_{2}\left(x\right) , \ldots , f_{m}
 				\left(x\right)\right).
 			\end{equation*}
  		\item $h : \rr^{m} \to \erl$ is a proper and lower semi-continuous (lsc) function.
	\end{itemize}
	The structure of the composite model (CM) offers extreme versatility over the traditional nonlinear
	programming formulation. The smooth assumptions are in the mapping $F$ and the function $f_{0}$, while
	constraints, penalties and nonconvex/nonsmooth terms can be handled by the nonconvex and nonsmooth
	function $h$. The composite structure allows to beneficially model a given problem and exploit data
	information, and essentially captures most optimization problems. This is illustrated below in Section
	\ref{ssec:examples}.
\medskip

	The main objective of this paper is to layout the main theoretical tools to achieve a deep understanding
	of augmented Lagrangian based methods and their fundamental properties in the nonconvex setting
	described by model (CM).
\medskip

	The Augmented Lagrangian (AL) methodology has a long history which can be traced back to the works of
	Hestenes \cite{H1969}, Powell \cite{P1969} and Haarhoff and Buys \cite{HB1970} with the so-called
	multipliers method for problems with equality constraints. The AL algorithmic framework was a major
	breakthrough in nonlinear optimization providing the ground to fundamental algorithms and applications
	which have been extensively studied in the literature for various classes of problems. For classical
	results  on the subject including many key results, extensions and closely related schemes such as the
	Proximal Methods of Multipliers (PMM) \cite{R1976} and the Alternating Direction of Multipliers (ADM)
	\cite{FG1983,GLT1989}, we refer the reader to the monographs of Bertsekas \cite{B1982-B} and
	Bertsekas-Tsitsiklis \cite{BT1989-B} and references therein.
\medskip

	Recently, there has been an intensive renewed interest in augmented Lagrangian based methods, and in
	particular within the ADM scheme. This surge of interest is mainly due to the emergence of new and
	modern applications arising in a broad diversity of applications areas such as signal processing, sparse
	approximation in data analysis and machine learning. These problems share particular structures which
	often adapt well to ADM and lead to computationally attractive schemes. A typical prototype which has
	been extensively studied is when all the data is {\em convex} with $F$ being a {\em linear} mapping, and
	problem (CM) reduces to the {\em convex linear composite problem}:
	\begin{equation*}
		\text{\rm (CM-L)} \qquad \mbox{minimize} \left\{ f_{0}\left(x\right) + h\left(Fx\right) : \, x \in
		\rr^{n} \right\}.
	\end{equation*}
	The recent literature on ADM for this convex problem is voluminous and clearly it is not the purpose of
	this paper to review it here. See, for instance, the recent work \cite{ST2014} for an account of old and
	new results on the convergence analysis of various augmented Lagrangian schemes, as well as many
	relevant references to earlier works and to more modern and recent contributions in the convex setting.
\medskip

	This work is a complete departure from the classical convex linear composite model. Indeed, in many of
	the modern applications alluded above the optimization model turns out to be not only nonsmooth but also
	includes inherent nonlinearities which the nonlinear composite model (CM) conveniently captures.
	Unfortunately, while as just mentioned, the analysis of Lagrangian based methods has been extensively
	studied in the convex case, the situation in the nonconvex setting is far from being well understood,
	and global analysis of Lagrangian methods for the general model (CM) remains scarce. In fact, only very
	recently some progress has been initiated in the nonconvex case, but {\em only for the linear} composite
	model (CM-L), see \eg \cite{LP2015} and references therein. Even in the simpler linear composite model,
	the situation is not trivial and the authors in \cite{LP2015} have to rely on various assumptions on the 
	problem's data. Out of studies on the linear composite model, we are not aware of any work attempting to 
	fully understand Lagrangian based methods for the general nonlinear composite model (CM) considered 
	here. The objective of the present work is to address this situation, and to develop the main 
	theoretical tools to achieve a deeper understanding of Lagrangian based methods and their fundamental 
	properties in the nonconvex setting described by the nonlinear composite model (CM).
\smallskip
	
	Before outlining some details on our approach, main contributions and results, we first recall some of
	the major obstacles met in the study of Lagrangian methods by evoking three most salient theoretical
	issues:
	\begin{enumerate}
		\item {\em AL methods are non-feasible methods:} this is due to the very nature of the penalty
			approach used to construct an augmented Lagrangian. As a consequence feasibility issues have to
			be dealt with particular care as they have a direct damaging impact on qualification conditions,
			as explained next.
		\item {\em Failure of qualification conditions}: A major problem with non-feasible methods is that
			qualification conditions must hold in a larger sense in order to allow for the good behavior of
			the algorithm when the current point is far from the feasible set. Yet, for very simple
			constraints, for instance spherical constraints (see Example \ref{ex:sc} and Remark
			\ref{r:failure}), assuming a qualification condition everywhere is not a viable option.
		\item {\em Oscillation issues}: AL methods are particularly well designed to handle problems having
			complex geometry, like for instance nonlinear inequality/equality constrained problems. A
			typical and difficult problem in this context is to tame oscillations of minimizing
			sequences\footnote{Similar difficulties occur in other approaches, see for instance,
			\cite{BP2015} for an illustration in the context of sequentially convex programming approaches,
			and \cite{AUS2015} in the context of an exact penalty approach.}. Moreover, AL methods are of
			{\em min-max dynamics} and thus, by nature, the values taken by the augmented Lagrangian
			function alternatively increase and decrease even if the sequence eventually converges. This
			oscillatory behavior makes the use and the design of Lyapunov functions particularly difficult.
	\end{enumerate}
\medskip

	One of the goals of this paper is to provide the reader with an original general Lagrangian methodology
	which can deal, all at once, with the above obstacles under general and mild assumptions on the
	problem's data. Let us briefly outline our exact contributions now.
\medskip

	The first innovative feature of our approach is to introduce and to study a broad class of algorithms
	through sequences that we call {\em Lagrangian sequences}. At the heart of this methodology is the idea
	of turning an arbitrary descent method into a multiplier method. The rationale is simple, once a method
	or mechanism is chosen, it is implemented on the primal variable(s) of the augmented Lagrangian, while
	the multiplier variable is updated in the classical and straightforward fashion. An illustrative but
	very informative instance of this approach is the famous proximal method of multipliers (PMM) alluded
	above which is modeled through an augmented Lagrangian with an added proximal term and consists of
	performing a proximal step on the primal variable while the multiplier is updated as in the classical AL
	method.
\medskip

	Based on the above methodology, we proceed and describe how we address the three points evoked above.
\medskip

	To circumvent the qualification failures and the lack of knowledge of fundamental constants, we
	introduce the notion of {\em information zone}. It is a subset of the space containing the feasible set
	and on which Lipschitz continuity and qualification conditions are known to hold and are quantifiable by
	simple real numbers (Lipschitz constants and regularity modulus). Then we provide our methodology with
	an {\em adaptive regime} that aims at detecting this zone and at forcing the iterates to stay within the
	zone. The detection of the zone is made by tuning dynamically the penalization parameter of the
	augmented Lagrangian at a sufficiently high value. This approach is shown to identify the zone in
	finitely many steps and deals thus with points 1 and 2.
\medskip

	Once the information zone is found, another crucial issue remains to address: rule out oscillations to
	ensure descent properties of the method, this is point 3 above. This is done by using once more the
	adaptive idea to detect an adequate Lyapunov function. At a technical level this function is
	nonincreasing but the rate of decrease is only controlled for one block of the primal sequence which is
	a departure from classical analysis.
\medskip

	The proposed novel approach and theoretical analysis developed in Sections \ref{Sec:LagCM} to
	\ref{Sec:proofs} allow us to eliminate the difficulties evoked above and to derive a generic Adaptive
	Lagrangian Based mUltiplier Method (ALBUM) for tackling the general nonconvex and nonlinear composite
	model (CM) which encompasses fundamental Lagrangian methods. This paves the way to derive convergence
	results, and in particular, global convergence results to a critical point of problem (CM) with
	semialgebraic data, by relying on the nonsmooth Kurdyka-{\L}ojasiewicz (KL) inequality
	\cite{L1963, K1998, BDL2006}. The potential of our results is demonstrated through the study of two 
	major Lagrangian schemes whose convergence was never analyzed in the proposed general setting: the 
	proximal multiplier method  and the proximal alternating direction of multipliers scheme, this is done 
	in Section \ref{sec:variants} where we also consider some additional interesting variants. We end the 
	introduction with some examples illustrating the versatility of model (CM).

\subsection{Examples of model (CM).} \label{ssec:examples}
	Below we give some examples which exhibit the versatility of model (CM). The first example describes 
	various well-known and classical models in the nonlinear optimization literature, while the remaining 
	four examples describe models arising in some recent applications.
\medskip

	\begin{example}[Nonlinear programming] \label{E:NLP}
		The standard nonlinear program with equality and inequality constraints:
		\begin{equation*}
			\mbox{(NLP)} \qquad \inf_{x \in \rr^{n}} \left\{ f_{0}\left(x\right) : \, f_{i}\left(x\right)
			\leq 0, \, i = 1 , 2 , \ldots , p, \,\, f_{i}\left(x\right) = 0, \, i = p + 1 , p + 2 , \ldots ,
			m \right\},
		\end{equation*}
		can be reformulated through the composite model (CM) by defining the separable model function 	
		$h\left(u\right) := \sum_{i = 1}^{m} h_{i}\left(u_{i}\right)$, where
		\begin{equation*}
			h_{i}\left(u_{i}\right) = i_{(-\infty , 0]}\left(u_{i}\right), \, i = 1 , 2 , \ldots , p, \quad
			\text{and} \quad h_{i}\left(u_{i}\right) = i_{\{0\}}\left(u_{i}\right), \, i = p + 1 , p + 2 ,
			\ldots , m.
		\end{equation*}
		{\em Lagrangians and Smooth penalties.} The standard Lagrangian associated to (NLP) as well as
		linear and quadratic penalty terms can easily be reformulated through model (CM) with a separable
		model function $h$ and an adequate choice of $h_{i}$, $i = 1 , 2 , \ldots , m$. For instance with
		$h_{i}\left(u_{i}\right) = y_{i}u_{i}$, $i = 1 , 2 , \ldots , m$, the standard Lagrangian of problem
		(NLP) is recovered. Likewise the usual penalized counterpart of the problem (NLP) given by
 		\begin{equation*}
 			\mbox{(P-NLP)} \qquad \inf \left\{ f_{0}\left(x\right) + \sum_{i = 1}^{p} \mu_{i}\max \left\{ 0
 			, f_{i}\left(x\right) \right\}^{2} + \sum_{i = p + 1}^{m} \mu_{i}\left|f_{i}\left(x\right)
 			\right|^{2} \right\}, \, (\mu_{i} > 0),
		\end{equation*}
		is recovered through model (CM) with the obvious choices
		\begin{equation*}
			h_{i}\left(u_{i}\right) = \mu_{i}\max \left\{0 , u_{i} \right\}^{2}, \, i = 1 , 2 , \ldots , p ,
			\quad \text{and} \quad  h_{i}\left(u_{i}\right) := \mu_{i}\left|u_{i}\right|^{2}, i = p + 1 , p
			+ 2 , \ldots ,m.
		\end{equation*}
		Obviously, the classical augmented Lagrangian itself for NLP can easily be recovered from model (CM)
		as well, with an adequate piecewise quadratic choice of $h_{i}$, $i = 1 , 2 , \ldots , m$, for
		the inequality constraints.
\smallskip
	
		{\em Nonsmooth and nonseparable $h$.} A classical nonsmooth model is the  $\ell_{1}$-norm penalized
		problem for equality constraints ($p \equiv 0$ in (NLP)) given by
		\begin{equation*}
			\inf_{x \in \rr^{n}} \left\{ f_{0}\left(x\right) + \sum_{i = 1}^{m} w_{i}\left|f_{i}\left(x
			\right)\right| \right\},
		\end{equation*}
 		which is covered by model (CM) with $h_{i}\left(u_{i}\right) := w_{i}\left|u_{i}\right|$ for some
 		$w_{i} > 0$, $i = 1 , 2 , \ldots , m$.
\smallskip

 		{\em Nonseparable nonsmooth: mini-max problems.} Let $f_{0} \equiv 0$ and $h\left(u\right) := \max
 		\left\{ u_{1} , u_{2} , \ldots , u_{m} \right\}$. Then, model (CM) produces the classical nonlinear
 		mini-max problem
		\begin{equation*}
			\inf_{x \in \rr^{n}} \max_{1 \leq i \leq m} f_{i}\left(x\right).
		\end{equation*}
	\end{example}
	The above example exhibit the versatility of model (CM) for traditional NLP. In all these examples $h$
	was convex. We now give three examples with {\em nonconvex} $h$ which include a broad variety of
	fundamental problems arising in applications.
	\begin{example}[Sparsity constrained problems]
		These problems arise in many areas of applications, for example, compressive sensing and machine
		learning see \eg \cite{SNW11}. A basic model (see \cite{BE2013}) reads
		\begin{equation*}
			\min \left\{ f\left(x\right) : \, \norm{x}_{0} \leq s \right\},
		\end{equation*}
		where $\norm{\cdot}_{0}$ stands for the usual counting function, \ie the number of nonzero
		coordinates of $x$, $s > 0$ is the desired sparsity level, and $f$ can be any smooth fidelity
		criterion (\eg least squares). Let $S := \left\{ x : \, \norm{x}_{0} \leq s \right\}$. Then, the
		above problem is a special case of model (CM) with $f_{0}\left(x\right) \equiv f\left(x\right)$, $F
		\left(x\right) \equiv x$ and $h$ is the nonconvex function described by the indicator of the closed
		set $S$, \ie $h\left(u\right) \equiv i_{S}\left(u\right)$.
\medskip

		Matrix rank minimization problems can be similarly formulated in the space of symmetric matrices
		using a constraint of the form $\rank(x) \leq s$.
\medskip
		
		Moreover, nonconvex {\em penalized approximations} of the following form have also been considered
		and found useful (see, \eg \cite{LT14} and references therein)
		\begin{equation*}
			\min \left\{ f\left(x\right) + \rho\sum_{i = 1}^{n} \varphi\left(\left|x_{i}\right|\right) \,
			x \in \rr^{n} \right\}, \quad (\rho > 0 \; \mbox{is a penalty parameter}),
		\end{equation*}
		where $\varphi$ is a concave (increasing) function on $\rr$ used to approximate the $l_{0}$-quasi
		norm. A typical example is the $l_{p}$-quasi norm with $\varphi\left(t\right) := t^{p}$, $0 < p <
		1$, and model (CM) covers this case as well, with an obvious identification for the nonconvex
		function $h$.
	\end{example}
	\begin{example}[Matrix minimization on Stiefel manifolds] \label{ex:sc}
		Optimization problems with matrix orthogonality constraints arise in many applications of science
		and engineering (\eg polynomial optimization, combinatorial optimization, eigenvalue problems,
		sparse PCA, matrix rank minimization, etc., \cite{EAS1998}). A basic problem reads as:
		\begin{equation*}
			\text{(O)} \quad \min \left\{ \Psi\left(X\right) : \, X^{T}X = I, \, X \in \rr^{n \times p}
			\right\},
		\end{equation*}
		where $\Psi : \rr^{n \times p} \rightarrow \rr$ is a smooth function (often quadratic), and $I$
		stands for the $p \times p$ identity matrix. The feasible set ${\cal S}_{n,p} := \left\{ X \in
		\rr^{n \times p} : \, X^{T}X = I \right\}$ is known as the \textit{Stiefel manifold}, which for $p =
		1$ reduces to the unit-sphere manifold ${\cal S}_{n,1} \equiv {\cal S}_{n} = \left\{ x \in \rr^{n} :
		\, \norm{x}_{2} = 1 \right\}$. Clearly, with $h$ being the nonconvex function described by the
		indicator of the closed set ${\cal S}_{n,p}$, problem (O) can easily be seen as a special case of
		model (CM) with the obvious identification for $f_{0}$ and $F$ in the space of real matrices $\rr^{n
		\times p}$.
	\end{example}
	\begin{example}[Nonconvex feasibility] \label{feas}
		Let $S_{1} , S_{2} , \ldots , S_{p}$ (for $p \geq 2$)  be nonempty and closed subsets of $\rr^{n}$. 
		The nonconvex feasibility problem consists in finding a point in the intersection $\displaystyle
		\cap_{i = 1}^{p} S_{i}$. These type of problems abound in many applications such as phase retrieval, 
		network sensors localizations or protein conformation, see \eg \cite{HL2013} for some recent 
		developments. One standard way to tackle the feasibility problem is simply to reformulate it as an 
		optimization problem:
		\begin{equation*}
			\min \left\{ \frac{1}{2\left(p - 1\right)}\sum_{i = 2}^{p} \norm{x_{1} - x_{i}}^{2} + \sum_{i = 
			1}^{p} i_{S_{i}}\left(x_{i}\right) : \,  \left(x_{1} , x_{2} , \ldots , x_{p}\right) \in \rr^{n 
			\times p} \right\},
		\end{equation*}
		Observe that ${\bar x} \in \displaystyle\cap_{i = 1}^{p} S_{i}$ if and only if the optimal value of 
		the above optimization problem at $\left({\bar x} , {\bar x} , \ldots , {\bar x}\right) \in \rr^{n 
		\times p}$ is zero.
\medskip

		Choosing $\rr^{n \times p}$ as the base space, setting $f_{0}\left(x_{1} , x_{2} , \ldots , x_{p}
		\right) = \left(2\left(p - 1\right)\right)^{-1}\sum_{i = 2}^{p} \norm{x_{1} - x_{i}}^{2}$ (which is 
		obviously a $C^{1 , 1}$ function), $F\left(x_{1} , x_{2} , \ldots , x_{p}\right) = \left(x_{1} , 
		x_{2} , \ldots , x_{p}\right)$ and $h\left(x_{1} , x_{2} , \ldots , x_{p}\right) = \sum_{i = 1}^{p} 
		i_{S_{i}}\left(x_{i}\right)$, we see that the above optimization problem fits our general model 
		(CM).
	\end{example}
\medskip

	\noindent {\bf Notations.} For any vector $w \in \real^{d}$, the standard Euclidean norm is simply 
	denoted by $\norm{w}$. Unless otherwise stated, for the subdifferential operators $\hat\partial$, $
	\partial$ and $\partial^{\infty}$ and other objects coming from variational analysis, we adopt the 
	notations and definitions of the monograph by Rockafellar and Wets \cite{RW1998-B}.

\section{The Lagrangian for nonlinear composite problems.} \label{Sec:LagCM}
	This section outlines the first steps toward the generic algorithm we develop and analyze in this paper.
	We define the augmented Lagrangian associated to problem (CM), basic qualification condition and
	assumptions,  and in particular, we introduce the fundamental and new concept of {\em information zone}
	which play a central role in the forthcoming analysis.

\subsection{Lagrangian and qualification condition.} \label{s:lag}
	In analogy to standard NLP, one can construct a natural Lagrangian for problem (CM) as follows. We first
	reformulate problem (CM) in the equivalent split form:
	\begin{equation*}
		\mbox{(CM)} \qquad \inf_{} \left\{ f_{0}\left(x\right) + h\left(u\right) : \, u = F\left(x\right),
		\, (x , u)\in \rr^{n}\times \R^m \right\}.
	\end{equation*}
	For this abstract equality constrained reformulation, the classical {\em Lagrangian} is defined by $\Lag
	: \rr^{n} \times \rr^{m} \times \rr^{m} \to \erl$ via
	\begin{equation*}
		\Lag\left(x , u , y\right) \equiv f_{0}\left(x\right) + h\left(u\right) + \act{y , F\left(x\right) -
		u}.
	\end{equation*}
	An {\em augmented Lagrangian} is a quadratic penalized version of the Lagrangian:
	\begin{align}
		\Laug_{\rho}\left(x , u , y\right) & := \Lag\left(x , u , y\right) + \frac{\rho}{2}\norm{F\left(x
		\right) - u}^{2} \nonumber \\
		& = f_{0}\left(x\right) + h\left(u\right) + \act{y , F\left(x\right) - u}+ \frac{\rho}{2}\norm{F
		\left(x\right) - u}^{2}, \label{D:AugLAc}
	\end{align}
	where $\rho > 0$ is a penalty parameter.
\medskip

	To ensure the well-posedness of the algorithms to come, throughout this paper we assume:
	\begin{equation} \label{WellPosed}
		\inf_{x , u} \Laug_{\rho}\left(x , u , y\right) > -\infty \,\,\, \text{for any fixed} \,\, y \in
		\rr^{m}.
	\end{equation}
	We assume below that model (CM) satisfies a standard qualification condition which we express in the
	compact form provided by variational analysis \cite[Chapter 10, pp. 428--430]{RW1998-B}. We denote by
	$\nabla F\left(x\right) \in \rr^{m \times n}$ the Jacobian matrix of $F$, whose rows are given by the
	gradient vectors $\left[\nabla f_{i}\left(x\right)\right]_{i = 1}^{m}$.
	\begin{assumption} \label{AssumptionA}
		The following constraint qualification  holds for problem (CM),
		\begin{equation*}
			\mbox{[CQ]} \qquad \nabla F\left(x\right)^{T}y = 0 , \quad y \in \partial^{\infty} h\left(F
			\left(x\right)\right) \, \Longrightarrow \, y = 0.
		\end{equation*}
	\end{assumption}	
	For the classical NLP case, which can be obtained from model (CM) as described in Example \ref{E:NLP},
	the condition [CQ] reduces to the classical Mangasarian-Fromovitz constraint qualification, see \eg
	\cite{B1982-B}.
\medskip
	
	The condition [CQ] is not only essential to provide smoothness and regularity of the constraint set, at
	a technical level, it is also important to provide a chain rule for the objective function of model
	(CM). This allows us to derive the first order necessary conditions for this model.
	\begin{definition}[First order optimality condition] \label{D:Opt}
		Let $F : \rr^{n} \rightarrow \rr^{m}$ be a continuously differentiable mapping, and let $h : \rr^{m}
		\rightarrow \erl$ be a proper and lsc function. If $x$ is a local minimizer of problem {\rm (CM)}
		satisfying Assumption \ref{AssumptionA}, then there exists $y \in \rr^{m}$ such that
		\begin{equation*}
 			\nabla f_{0}\left(x\right) + \nabla F\left(x\right)^{T}y = 0 \quad \mbox{with} \quad  y \in
 			\partial h\left(F\left(x\right)\right).
		\end{equation*}
	\end{definition}
	The set of critical points of a function $\psi$, is denoted by $\crit \psi$. For problem (CM) with the
	objective function $f$, we have	
	\begin{equation} \label{crit-f}
		\crit f = \left\{ x \in \rr^{n} : \, 0 \in \nabla f_{0}\left(x\right) + \nabla F\left(x\right)^{T}
		\partial h\left(F\left(x\right)\right) \right\}.
	\end{equation}
	
\subsection{The information zone.}
	Lagrangian based methods require to handle simultaneously penalty parameters, constants, and
	qualification condition which is a delicate matter. An important aspect of this work is to address these
	issues.
\medskip

	Augmented Lagrangian methods are based on relaxing the classical Lagrangian and therefore by nature
	these are unfeasible methods. Measures of unfeasibility of these methods are naturally connected to the
	``looseness" of the relaxation. The looser is the relaxation, the more unfeasible is the method. Over
	relaxation could even result in absurd behaviors.
\medskip

	The augmented Lagrangian $\Laug_{\rho}$ as given in \eqref{D:AugLAc} is
	\begin{equation*}
		\Laug_{\rho}\left (x , u , y\right) := f_{0}\left(x\right) + h\left(u\right) + \act{y , F\left(x
		\right) - u} + \frac{\rho}{2}\norm{F\left(x\right) - u}^{2}, \,\, \mbox{ with }\rho > 0.
	\end{equation*}
	In this context the looseness/sharpness of the relaxation is embodied within the penalty parameter $\rho
	$ which is used to penalize the constraint $F\left(x\right) = u$ in the augmented Lagrangian
	$\Laug_{\rho}$. At an analytic level this penalty reflects the fact that for a fixed triple $\left(x , u
	, y\right)$ one has
	\begin{equation*}
		\lim_{\rho \rightarrow +\infty} \Laug_{\rho}\left(x , u , y\right) =
		\begin{cases}
			f_{0}\left(x\right) + h\left(F\left(x\right)\right), & \mbox{ if } F\left(x\right) = u, \\
			+\infty, & \text{ otherwise,}
		\end{cases}
	\end{equation*}
	which amounts, in some sense, to the convergence of $\Laug_{\rho}$ to $\Lag$ as $\rho \rightarrow +
	\infty$.	
\medskip
	
	A major drawback of such unfeasible methods, easily guessed from the above, is that they generate points
	that might be out of control in the sense that:
	\begin{itemize}
		\item[--] constraint qualification conditions may fail,
		\item[--] assumptions on the problem's data, such as global Lipschitz constants of the various
			objects involved may become unknown or out of reach.
	\end{itemize}
\medskip

	On the other hand, assuming a global control is very demanding and could be unrealistic in practice.
\medskip

	To remedy these obstacles all at once our approach is twofold: we first define an information zone,
	denoted by $\Zone$, to be a region for which regularity is under control and constants are known. Second
	we provide a generic Lagrangian scheme described below with an extra-adaptive search made to reach the
	information zone\footnote{As we shall see soon the adaptive regime allows also for dynamic adjustment of
	the step-sizes to other geometrical features.}

 	Let $\dom{h} = \left\{ u \in \rr^{m} : \, h\left(u\right) < \infty \right\}$ which is nonempty and
 	closed. Then the feasible set of problem (CM) is defined by
	\begin{equation*}
		\FF = \left\{ x \in \rr^{n} : \, F\left(x\right) \in \dom h \right\}.
	\end{equation*}
	\begin{definition}[Information zone] \label{def:ifoz}
		Given the feasible set $\FF$ for problem (CM), an information zone is a subset $\Zone$ of $\rr^{n}$
		such that there exists ${\bar d} \in \left(0 , +\infty\right]$ for which
		\begin{equation} \label{zone}
			\Zone \supset \left\{ x \in \rr^{n} : \, \dist\left(F\left(x\right) , \dom h\right) \leq {\bar
			d} \, \right\} \supset \FF.
		\end{equation}
	\end{definition}
	The information zone is an enlargement of the feasible set $\FF$. It should be noted that the 
	information zone $\Zone$ depends on the parameter ${\bar d}$. For simplicity of exposition, in the 
	forthcoming section, this dependence is not explicitly mentioned. In the next definition we recall a 
	fundamental and classical regularity assumption (see, \eg Milnor \cite{M31}).
	\begin{definition}[Uniform regularity] \label{def:regul}
		Let $\Omega$ be an open subset of $\rr^{n}$, $F : \Omega \rightarrow \rr^{m}$ be a continuously
		differentiable mapping, and let $S$ be a nonempty subset of $\Omega$. We say that $F$ is uniformly
		regular on $S$ with constant $\gamma > 0$ if the following holds:
		\begin{equation*}
			\norm{\nabla F\left(x\right)^{T}v} \geq \gamma\norm{v}, \,\, \forall \, x \in S, \,\, v \in
			\rr^{m}.
		\end{equation*}
	\end{definition}
	\begin{remark} \label{r:param1}
 		For a given $x \in \Omega$, asserting that
 		\begin{equation*}
 			\gamma(F,x) = \min\left\{ \norm{\nabla F\left(x\right)^{T}v} : \, \norm{v} = 1 \right\},
 		\end{equation*}
 		is nonzero is equivalent to the fact that $\nabla F\left(x\right)$ is surjective or $\nabla F\left(x
 		\right)\nabla F\left(x\right)^{T}$ is positive definite. In nonlinear optimization it is also known
 		as Mangasarian-Fromovitz condition at~$x$. Geometrically it means that the set  $\left\{ y \in U :
 		\, F\left(y\right) = F\left(x\right) \right\}$ is a $C^{1}$ manifold for any small open neighborhood
 		around $x$.
\medskip

		Note also that
		\begin{equation} \label{eigen}
			\gamma \equiv \gamma(F,x) = \sqrt{\lambda_{\min}\Big(\nabla F\left(x\right)\nabla F\left(x
			\right)^{T}\Big)}, 				
		\end{equation}
		where $\lambda_{\min} (A)$ denotes the smallest eigenvalue of a real symmetric matrix $A$.
	\end{remark}

\subsection{Basic assumptions for model (CM).}
	We introduce the following essential assumptions.
	\begin{assumption} \label{AssumptionB}
		Given an information zone $\Zone$, we assume that:
		\begin{itemize}
			\item[$\rm{(i)}$] $F$ is uniformly regular over $\Zone$ with constant $\gamma$,
			\item[$\rm{(ii)}$] $\nabla F$ is $L(F)$ Lipschitz continuous over $\Zone$,
			\item[$\rm{(iii)}$] $\nabla f_{0}$ is $L(f_{0})$ Lipschitz continuous over $\Zone$.
		\end{itemize}
	\end{assumption}
	\begin{remark} \label{r:info}
		\begin{itemize}
			\item[(a)] Naturally, the Lipschitz continuity and the uniform regularity are not required on
				the whole space $\rr^{n}$, but only on the information zone $\Zone$. This is a departure
				from the usual setting.
			\item[(b)] When $\nabla f_{0}$ is known to be Lipschitz continuous on the whole space $\rr^{n}$,
				and the mapping $F$ is assumed to be linear, \ie $F\left(x\right) = Fx$ for some matrix $F
				\in \rr^{n \times m}$ with full row rank, then Assumption \ref{AssumptionB} holds with
				$\Zone \equiv \rr^{n}$ (\ie ${\bar d} = +\infty$) and $FF^{T} \succeq \gamma I_{n}$ where
				$\gamma = \sqrt{\lambda_{\min}(FF^{T})} > 0$. 		
		\end{itemize}
	\end{remark}
\medskip

	Let us illustrate the concept of the information zone on a simple but fundamental and emblematic 
	situations (\cf Example \ref{ex:sc}).
	\begin{example}[Spherical constraints] \label{cex}
		Assume that $F\left(x\right) = \norm{x}^{2}$ and $h = i_{\{ 1 \}}$. For simplicity we also assume
		that $f_{0}$ is globally Lipschitz.
\smallskip

		One has $\nabla F\left(x\right) = 2x$ and thus for a fixed $x$, $\gamma\left(F , x\right) =
		2\norm{x}$. Take $r_{1} \in \left(0 , 1\right)$, and define $\Zone = \left\{ x \in \rr^{n} : \,
		r_{1} \leq \norm{x} \right\}$. We see that $F$ is $2r_{1}$ regular on $\Zone$  and $\nabla F$ is
		$2$-Lipschitz continuous. Hence $\Zone$ can be chosen as an information zone as long as we show
		that \eqref{zone} holds true. Take ${\bar d} = 1 - r_{1}^{2}$, it is easy to check that $\left|
		\norm{x}^{2} - 1\right| \leq {\bar d}$ implies, in particular, that $1 - \norm{x}^{2} \leq 1 - r_{1}
		^{2}$. Note that $0 \notin \Zone$ and that $\rr^{n}$ could not be an acceptable choice for an
		information zone because of the degeneracy of $\nabla F$ at $\left\{ 0 \right\}$. 	
	\end{example}
	\begin{remark}[Systematic failure of global CQ with compact equality constraints] \label{r:failure}
		The preceding example reveals a simple and systematic phenomenon which motivates strongly the use of
		an information zone. Consider a  $C^{1}$ function $F:\rr^{n} \rightarrow \rr$ such that $[F = 0]$ is
		a compact manifold and assume that $\mbox{int } [F\leq 0] = [F<0]$. Then, necessarily there exists
		$x^{\ast}$ such that $\nabla F\left(x^{\ast}\right) = 0$. Indeed, by taking $x^{\ast}$ to be a
		minimizer of $F$ over the compact set $[F \leq 0]$ and since this minimizer lies within $[F < 0]$ it
		follows that $\nabla F\left(x^{\ast}\right) = 0$. This shows that in general, it is not possible, to
		have $\Zone = \rr^{n}$.
	\end{remark}

\section{Adaptive Lagrangian based multiplier method.} \label{Sec:ALBUM}
$ $
\medskip

	\begin{center}
		\fbox{From now on Assumptions \ref{AssumptionA} and \ref{AssumptionB} form our blanket assumptions.}
	\end{center}	
\medskip
	
	As explained previously, difficult obstacles are faced both in the design and the study of Lagrangian
	based methods: lack of descent, and above all, feasibility issues. The adaptive idea we develop here is
	precisely meant to put us in a position where these issues are treated in a dynamical fashion: both the
	information zone and the ``energy functional" $\EE_{\beta}$ which we introduce now come into a play.

\subsection{Lagrangian and a Lyapunov function.}	
	We shall need to work with an  auxiliary function which is very similar to the augmented Lagrangian
	$\Laug_{\rho}$ (defined in \eqref{D:AugLAc}). This is a classical approach often called the ``Lyapunov"
	methodology. It will reveal the optimizing property of the generic Lagrangian scheme we introduce next.
\medskip

	Let $\beta > 0$ and $w \in \rr^{n}$, here we consider the Lyapunov function which is defined by
	\begin{equation} \label{D:Lyap}
		\EE_{\beta}\left(x , u , y , w\right) : = \Laug_{\rho}\left(x , u , y\right) + \beta\norm{x - w}
		^{2}.
	\end{equation}
	Below, we record the relationships between the critical point sets of the three relevant functions $f$,
	$\Laug_{\rho}$ and $\EE_{\beta}$. These relations already suggest the pivotal role to be played by
	$\EE_{\beta}$. Recall that condition [CQ] is always assumed, \ie Assumption \ref{AssumptionA} holds.
	\begin{proposition}[Critical points relationships] \label{P:Crit}
		Let $x \in \rr^{n}$ and $u , y \in \rr^{m}$. The following implications hold:
		\begin{equation*}
			\left(x , u , y , x\right) \in \crit{\EE_{\beta}} \, \Longrightarrow \, \left(x , u , y
			\right) \in \crit{\Laug_{\rho}} \, \Longrightarrow \, x \in \crit{f},
		\end{equation*}
		for all $\beta , \rho > 0$.
	\end{proposition}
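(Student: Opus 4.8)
The plan is to reduce the statement to an explicit computation of the (limiting) subdifferentials of $\EE_\beta$ and $\Laug_\rho$, followed by a comparison of the resulting stationarity systems. Both functions have the structure ``a $C^1$ function plus the lsc term $h$ acting on the $u$-block alone'': $\EE_\beta(x,u,y,w) = g(x,u,y,w) + h(u)$ with $g$ continuously differentiable — this is precisely where the smoothness of $f_0$ and $F$ enters — and likewise $\Laug_\rho(x,u,y) = g_0(x,u,y) + h(u)$. Hence the first step is to invoke the sum rule for a smooth perturbation (\eg \cite[Exercise 8.8]{RW1998-B}), together with the elementary fact that on the product space the subdifferential of $(x,u,y,w) \mapsto h(u)$ is supported in the $u$-coordinate only, to obtain
\begin{equation*}
	\partial\EE_\beta(x,u,y,w) = \{\nabla_x g\}\times\big(\nabla_u g + \partial h(u)\big)\times\{\nabla_y g\}\times\{\nabla_w g\},
\end{equation*}
and the analogous identity for $\partial\Laug_\rho$. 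Writing this out, $(x,u,y,w)\in\crit{\EE_\beta}$ amounts to the system
\begin{align*}
	0 & = \nabla f_0(x) + \nabla F(x)^{T}\big(y + \rho(F(x)-u)\big) + 2\beta(x-w), \\
	0 & \in \partial h(u) - y - \rho(F(x)-u), \\
	0 & = F(x) - u, \\
	0 & = -2\beta(x-w),
\end{align*}
while $(x,u,y)\in\crit{\Laug_\rho}$ amounts to exactly the first three relations with the summand $2\beta(x-w)$ deleted.

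For the first implication I would evaluate this system at the point $(x,u,y,x)$, \ie set $w=x$: the fourth equation then holds automatically, and the term $2\beta(x-w)$ in the first equation vanishes, so what remains is verbatim the stationarity system of $\Laug_\rho$ at $(x,u,y)$. Therefore $(x,u,y,x)\in\crit{\EE_\beta}$ forces $(x,u,y)\in\crit{\Laug_\rho}$ for every $\beta,\rho>0$ (in fact the two are equivalent). For the second implication I would start from $(x,u,y)\in\crit{\Laug_\rho}$, use the third relation to get $u = F(x)$, and substitute $F(x)-u=0$ into the first two relations; they collapse to $\nabla f_0(x) + \nabla F(x)^{T}y = 0$ together with $y\in\partial h(F(x))$. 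Combining these two facts gives $0\in\nabla f_0(x) + \nabla F(x)^{T}\partial h(F(x))$, which is exactly the membership $x\in\crit{f}$ recorded in \eqref{crit-f}.

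I do not expect a genuine obstacle here: the argument is essentially a differentiation exercise. The only point demanding a little care is the subdifferential calculus in the first step — justifying $\partial(g+h) = \nabla g + \partial h$ for smooth $g$ and the block structure of the $h$-contribution on the product space — but this is standard in variational analysis once one has checked that $g$ (resp.\ $g_0$) is genuinely $C^1$. Note also that Assumption \ref{AssumptionA} ([CQ]) is not used for this calculus; it intervenes only through the characterization \eqref{crit-f} of $\crit{f}$ invoked at the very end.
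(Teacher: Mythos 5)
Your proposal is correct and follows essentially the same route as the paper: compute $\partial\EE_{\beta}$ and $\partial\Laug_{\rho}$ via the smooth-plus-$h(u)$ sum rule, set $w=x$ to obtain the first implication, and use $F(x)=u$ from the $y$-stationarity to collapse the remaining relations to $0=\nabla f_{0}(x)+\nabla F(x)^{T}y$ with $y\in\partial h(F(x))$, which is the membership \eqref{crit-f}. Your added remarks (that the first implication is in fact an equivalence, and that [CQ] enters only through the characterization of $\crit f$) are accurate but not needed.
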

	\begin{proof}
		The result follows easily from standard subdifferential calculus rules. Indeed, from the definition
		of $\EE_{\beta}$ (see \eqref{D:Lyap}) we have that $\left(x , u ,  y , w\right) \in
		\crit{\EE_{\beta}}$ if and only if
		\begin{equation} \label{critE}
			\hspace{-0.07in} \left(0 , 0 , 0 , 0\right) \in \left(\nabla_{x} \Laug_{\rho}\left(x , u , y
			\right) + 2\beta\left(x - w\right) , \partial_{u} \Laug_{\rho}\left(x , u , y\right) ,
			\nabla_{y} \Laug_{\rho}\left(x , u , y\right) , 2\beta\left(w - x\right)\right).
		\end{equation}
		On the other hand, using the definition of $\Laug_{\rho}$ (see \eqref{D:AugLAc}) we obtain
		\begin{align}
			\nabla_{x} \Laug_{\rho}\left(x , u , y\right) & = \nabla f_{0}\left(x\right) + \nabla F\left(x
			\right)^{T}\left(y + \rho\left(F\left(x\right) - u\right)\right), \label{critAc1} \\
			\partial_u \Laug_{\rho}\left(x , u , y\right) & = \partial h\left(u\right) + \rho\left(u - F
			\left(x\right)\right) - y, \label{critAc2} \\
			\nabla_{y} \Laug_{\rho}\left(x , u , y\right) & = F\left(x\right) - u. \label{critAc3}
		\end{align}
		Therefore, taking $w = x$ in \eqref{critE}, the first implication in the proposition follows. The
		second implication follows by noticing that with $\left(x , u , y\right) \in \crit{\Laug_{\rho}}$,
		the three relations \eqref{critAc1}, \eqref{critAc2} and \eqref{critAc3} reduce to $0 = \nabla f_{0}
		\left(x\right) + \nabla F\left(x\right)^{T}y$ and $0 \in \partial h\left(F\left(x\right)\right) - y
		$. Hence, using Definition \ref{D:Opt}, we obtain that $x \in \crit{f}$. This complete the proof.
	\end{proof}

\subsection{A generic algorithm: {\bf ALBUM}.} \label{s:algo}
	In order to describe the forthcoming generic scheme, we first need to introduce a primal black-box map
	which governs the mechanism of the global convergence methodology to be developed in Section
	\ref{SSec:Meth}.
	\begin{definition}[Lagrangian algorithmic map] \label{D:LagAlg}
		Consider the optimization model (CM) and its associated augmented Lagrangian $\Laug_{\rho}$ which is
		defined in \eqref{D:AugLAc}. Let $\left(x , u , y\right) \in \real^{n} \times \real^{m} \times
		\real^{m}$ be any given triple. A \emph{primal black-box map} $\AAA_{\rho}$ generates a couple
		$\left(x^{+} , u^{+}\right)$ by
		\begin{equation*}
       		\left(x^{+} , u^{+}\right) \in \AAA_{\rho}\left(x , u , y\right).
		\end{equation*}
		A primal black-box map $\AAA_{\rho}$ is called a \emph{Lagrangian algorithmic map} if there are two
		positive constants $a$ and $b$ such that
		\begin{equation*}
			\mbox{(i)} \quad \frac{a}{2}\norm{x^{+} - x}^{2} + \Laug_{\rho}\left(x^{+} , u^{+} , y\right)
			\leq \Laug_{\rho}\left(x , u , y\right),
		\end{equation*}
		and
		\begin{equation*}
			\hspace{-0.7in} \mbox{(ii)} \hspace{0.2in} \norm{\nabla_{x} \Laug_{\rho}\left(x^{+} , u^{+} , y
			\right)} \leq b\norm{x^{+} - x}.
		\end{equation*}
	\end{definition}
\medskip

	Thus, once we chose the Lagrangian algorithmic map $\AAA_{\rho}$, this choice fully determine the
	constants $a$ and $b$, which play an important role in the generic algorithm outlined below. Note that
	these constants  might depend on the problem's data input (\eg Lipschitz constant, uniform regularity 	
	constant, or/and algorithmic constants, \eg proximal/penalty parameters). We deferred to Section~\ref{sec:variants} for two instances of fundamental  Lagrangian algorithmic maps.
\medskip

	The proposed generic adaptive algorithm aims at forcing $x^{k}$ to enter the information zone, which is
	a minimal requirement if we hope for a good behavior of our unfeasible schemes.
\medskip

    {\center\fbox{\parbox{16cm}{{\bf Adaptive Lagrangian-Based mUltiplier Method -- ALBUM}
		\begin{enumerate}
			\item[1.] Input: $\AAA_{\rho}$ a Lagrangian algorithmic map.
			\item[2.] Initialization: Fix $\delta , \rho_{0} > 0$ and start with any $\left(x^{0} , u^{0} ,
				y^{0}\right) \in \rr^{n} \times \rr^{m} \times \rr^{m}$.		
            \item[3.] For each $k = 0 , 1 , \ldots$ generate a sequence $\left\{ \left(x^{k} , u^{k} ,
            		y^{k}\right) \right\}_{k \in \nn}$ as follows:
                	\begin{enumerate}
                    	\item[3.1.] Primal step
                       		\begin{equation} \label{GenericAdap:PriStep}
                        		\left(x^{k + 1} , u^{k + 1}\right) \in \AAA_{\rho_{k}}\left(x^{k} , u^{k} ,
                        		y^{k}\right).
							\end{equation}
                    	\item[3.2.] Multiplier step
                       		\begin{equation} \label{GenericAdap:MultiStep}
                           		y^{k + 1} = y^{k} + \rho_{k}\left(F\left(x^{k + 1}\right) - u^{k + 1}
                           		\right).
                       		\end{equation}
					\item[3.3.] Adaptive step: choose $\tau \in \left(0 , \frac{a}{2}\right)$ and set
						$\beta_{k} := \frac{b^{2}}{\rho_{k}\gamma}$. If $x^{k + 1} \notin \Zone$ or
						\begin{equation} \label{GenericAdap:AdapStep}
							\tau\norm{x^{k + 1} - x^{k}}^{2} > \EE_{\beta_{k}}\left(x^{k} , u^{k} , y^{k} ,
							x^{k - 1}\right) - \EE_{\beta_{k}}\left(x^{k + 1} , u^{k + 1} , y^{k + 1} ,
							x^{k}\right),
						\end{equation}
						set $\rho_{k + 1} = \rho_{k} + \delta$. Otherwise, set $\rho_{k + 1} = \rho_{k}$.
        			\end{enumerate}
			\end{enumerate}}}}
\vspace{0.2in}

	The relations between $a$, $b$, the penalty parameters sequence $\seq{\rho}{k}$ and other data input
	constants will be made more precise whence we develop our analytic framework in Section \ref{Sec:KeyL}.
\medskip

	We record here a simple consequence which will be useful in our analysis that immediately follows from
	the definitions of $\rho_{k}$ and $\beta_{k}$ (see Step 3.3):
 	\begin{equation} \label{parcb}
  		\rho_{k} \geq \rho_{0} > 0 \quad \text{and} \quad \beta_{k} \leq \beta_{0}, \,\,\, \text{for all} \,
  		\, k \in \nn.
  	\end{equation}
	\begin{remark} \label{r:param2}
		In some cases the penalty parameters $\rho_{k}$, $k \in \nn$, can be adjusted so that Step 3.3
		automatically holds with $\rho_{k} = \rho$ for all $k \in \nn$. In this case the iterations boils
		down to Steps 3.1 and 3.2 only. This will happen for instance in the case when the information zone
		is the whole space, \eg when $F$ is linear (\cf Remark \ref{r:info} and Remark \ref{r:lin} below).
	\end{remark}
	
\subsection{A methodology for Lagrangian based methods.} \label{SSec:Meth}
	First note that, once the input Lagrangian algorithmic map $\AAA_{\rho}$ is chosen, {\bf ALBUM}
	generates a sequence $\Seq{z}{k} := \left\{ \left(x^{k} , u^{k} , y^{k}\right) \right\}_{k \in \nn}$,
	which thanks to Definition \ref{D:LagAlg}, must satisfy the following two conditions
	\begin{itemize}
		\item[{\bf C1}] There exists a positive constant $a$ such that
			\begin{equation*}
				\frac{a}{2}\norm{x^{k + 1} - x^{k}}^{2} + \Laug_{\rho_{k}}\left(x^{k + 1} , u^{k + 1} ,
				y^{k}\right) \leq \Laug_{\rho_{k}}\left(x^{k} , u^{k} , y^{k}\right), \quad \forall \,\, k
				\geq 0.
			\end{equation*}
		\item[{\bf C2}] There exists a positive constant $b$ such that
			\begin{equation*}
				\norm{\nabla_{x} \Laug_{\rho_{k}}\left(x^{k + 1} , u^{k + 1} , y^{k}\right)} \leq b
				\norm{x^{k + 1} - x^{k}}, \quad \forall \,\, k \geq 0.
			\end{equation*}
	\end{itemize}	
	Independently of the algorithmic map $\AAA_{\rho}$ which governs the mechanism of a primal black-box,
	we also need two additional assumptions on the corresponding generated sequence $\Seq{z}{k}$ which we
	record now:
	\begin{itemize}
		\item[{\bf C3}] There exists a positive constant $c$ such that
			\begin{equation*}
				\norm{v^{k + 1}} \leq c\norm{x^{k + 1} - x^{k}}, \quad \forall \,\, k \geq 0,
			\end{equation*}
			for some $v^{k + 1} \in \partial_{u} \Laug_{\rho_{k}}\left(x^{k + 1} , u^{k + 1} , y^{k}\right)
			$.
		\item[{\bf C4}] Let ${\bar u}$ be a limit point of a subsequence $\left\{ u^{k} \right\}_{k \in
			{\cal K}}$ of $\Seq{u}{k}$, then $\limsup_{k \in {\cal K} \subset \nn} h(u^k) \leq h({\bar u})$.
	\end{itemize}
\medskip

	Some comments are now in order. First, note that the proposed methodology, while similar in spirit, is
	fundamentally different from the general methodology recently proposed in \cite{BST2014}, which is
	unfortunately not applicable for {\bf ALBUM}, due to the primal-dual structure of this scheme. In
	particular,
	\begin{itemize}
		\item The first condition {\bf C1} is a {\em partial descent property} on $\Laug_{\rho}\left(\cdot
			\right)$. It pertains to the primal variables $\left(x , u\right)$, since by nature the dual
			variable $y$ is an ``ascent variable". The dissymmetry between $x$ and $u$ in the descent
			condition could be removed by further generalizing our approach. For the sake of simplicity, we
			only consider the case when the quantity of decrease in $x$ is known.
		\item Conditions {\bf C2} and {\bf C3} provide subgradient bounds for $\Laug_{\rho}\left(\cdot
			\right)$ with respect to the primal variables.
		\item The sequential assumption on $h$, that is, condition {\bf C4}, is a minimal and extremely weak
			requirement. This property holds for instance when $h : \dom h \rightarrow \rr$ is continuous.
	\end{itemize}	
\medskip

	From now on, and through the rest of this paper we adopt the following terminology:
\medskip

	\begin{center}
		\fbox{\parbox{11cm}{\center \vspace{-0.15in}A sequence $\Seq{z}{k}$ which is generated by {\bf ALBUM} and satisfies
		conditions {\bf C1}--{\bf C4} is called a \textit{Lagrangian sequence}.}}
	\end{center}
\medskip

	As we shall see soon, many fundamental Lagrangian based methods produce  Lagrangian sequences. This
	allows us to derive convergence results in a unified way for such methods and their variants. We
	postpone the description of these methods to Section \ref{sec:variants}, and we announce next, our main
	convergence results for {\bf ALBUM}, which will be proved in the following sections.

\subsection{Main convergence results for {\bf ALBUM}.} \label{mainconv}
	Our central theoretical contributions on the convergence  of {\bf ALBUM} to a critical point of problem
	(CM) are stated in the following two results.
	\begin{theorem}[Subsequence convergence] \label{T:SubConv}
		Let $\Seq{z}{k}$ be a bounded Lagrangian sequence and let $\left({\bar x} , {\bar u} , {\bar y}
		\right)$ be a limit point of $\Seq{z}{k}$. Then ${\bar x}$ is a critical point of the original
		problem {\rm(CM)}.
	\end{theorem}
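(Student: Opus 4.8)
The plan is to extract information from the adaptive mechanism of \PM, then pass to the limit along the converging subsequence. First I would argue that the penalty parameters stabilize: I claim that $\seq{\rho}{k}$ is eventually constant, say $\rho_k \equiv \rho$ for all $k \geq k_0$. Suppose not; then the adaptive test in Step 3.3 fails infinitely often, so $\rho_k \to +\infty$ and hence $\beta_k = b^2/(\rho_k\gamma) \to 0$. Using {\bf C1} together with the multiplier update \eqref{GenericAdap:MultiStep}, one gets the standard identity
\begin{equation*}
	\Laug_{\rho_k}\left(x^{k+1} , u^{k+1} , y^{k+1}\right) = \Laug_{\rho_k}\left(x^{k+1} , u^{k+1} , y^{k}\right) + \frac{1}{\rho_k}\norm{y^{k+1} - y^{k}}^{2},
\end{equation*}
and {\bf C2} gives $\norm{y^{k+1}-y^k} = \rho_k\norm{F(x^{k+1})-u^{k+1}}$ controllable through the stationarity residual; a short computation shows that for $\rho_k$ large enough the decrease of the energy $\EE_{\beta_k}$ between consecutive iterates dominates $\tau\norm{x^{k+1}-x^k}^2$, so the adaptive test must eventually hold — contradicting $\rho_k \to +\infty$. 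Simultaneously the first branch of Step 3.3 forces $x^{k+1} \in \Zone$ for all $k$ large, so from some index on the iterates live in the information zone where Assumption \ref{AssumptionB} is in force.

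Next, with $\rho_k \equiv \rho$ and $\beta_k \equiv \beta$ fixed for $k \geq k_0$, the adaptive test now \emph{holds} at every step, i.e.
\begin{equation*}
	\tau\norm{x^{k+1} - x^{k}}^{2} \leq \EE_{\beta}\left(x^{k} , u^{k} , y^{k} , x^{k-1}\right) - \EE_{\beta}\left(x^{k+1} , u^{k+1} , y^{k+1} , x^{k}\right).
\end{equation*}
Since $\Seq{z}{k}$ is bounded and $F$, $f_0$ are continuous, $\EE_{\beta}$ is bounded below along the sequence (using \eqref{WellPosed} and boundedness), so telescoping yields $\sum_k \norm{x^{k+1}-x^k}^2 < \infty$, hence $\norm{x^{k+1}-x^k}\to 0$. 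Then {\bf C2} gives $\nabla_x\Laug_{\rho}(x^{k+1},u^{k+1},y^k)\to 0$ and {\bf C3} gives a sequence $v^{k+1}\in\partial_u\Laug_{\rho}(x^{k+1},u^{k+1},y^k)$ with $v^{k+1}\to 0$; also $\nabla_y\Laug_\rho(x^{k+1},u^{k+1},y^k)=F(x^{k+1})-u^{k+1}=\rho^{-1}(y^{k+1}-y^k)$, and I would bound $\norm{y^{k+1}-y^k}$ by $\norm{x^{k+1}-x^k}$ using uniform regularity: \eqref{critAc1} plus {\bf C2} and Definition \ref{def:regul} give $\gamma\norm{y^k+\rho(F(x^{k+1})-u^{k+1})}\leq\norm{\nabla F(x^{k+1})^T(\cdots)}\leq b\norm{x^{k+1}-x^k}+\norm{\nabla f_0(x^{k+1})-\nabla f_0(x^k)}+(\text{Lipschitz terms})$, so after rearranging, the feasibility gap $F(x^{k+1})-u^{k+1}\to 0$ as well.

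Finally I would pass to the limit along the subsequence converging to $({\bar x},{\bar u},{\bar y})$. Feasibility $F(x^{k+1})-u^{k+1}\to 0$ gives ${\bar u}=F({\bar x})$. From $\nabla_x\Laug_\rho(x^{k+1},u^{k+1},y^k)\to 0$ and continuity of $\nabla f_0$, $\nabla F$ (on $\Zone$), together with $y^{k+1}-y^k\to 0$ so that $y^k+\rho(F(x^{k+1})-u^{k+1})=y^{k+1}\to{\bar y}$, we obtain $\nabla f_0({\bar x})+\nabla F({\bar x})^T{\bar y}=0$. For the subdifferential inclusion, $v^{k+1}\in\partial h(u^{k+1})+\rho(u^{k+1}-F(x^{k+1}))-y^k$ with $v^{k+1}\to 0$, $u^{k+1}\to{\bar u}$, $\rho(u^{k+1}-F(x^{k+1}))\to 0$, $y^k\to{\bar y}$, so $w^{k+1}:=v^{k+1}-\rho(u^{k+1}-F(x^{k+1}))+y^k\in\partial h(u^{k+1})$ with $w^{k+1}\to{\bar y}$; here condition {\bf C4} is exactly what is needed: $\limsup h(u^k)\leq h({\bar u})$, combined with lower semicontinuity of $h$, gives $h(u^k)\to h({\bar u})$, which together with $w^{k+1}\to{\bar y}$ lets me invoke closedness of the limiting subdifferential (Rockafellar--Wets) to conclude ${\bar y}\in\partial h({\bar u})=\partial h(F({\bar x}))$. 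Hence $({\bar x},{\bar u},{\bar y})\in\crit\Laug_\rho$, and by Proposition \ref{P:Crit}, ${\bar x}\in\crit f$. The main obstacle is the first step — showing the penalty parameters stabilize — since it requires a quantitative estimate of how the energy decrease scales with $\rho_k$ to defeat the oscillatory min-max behavior; the closure argument for $\partial h$ via {\bf C4} is the other delicate point, everything else being routine limiting arguments.
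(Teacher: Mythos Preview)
Your proposal is correct and follows essentially the same route as the paper: stabilize the penalty parameter (the paper's Lemma~\ref{L:SufficentDecrease}, preceded by the Information Lemma~\ref{L:Stab} and the multiplier control Lemma~\ref{L:DualSequence}), telescope the Lyapunov decrease to get $\sum_k\norm{x^{k+1}-x^k}^2<\infty$ (Proposition~\ref{P:WeakConv}), then use {\bf C2}, {\bf C3}, {\bf C4} together with closedness of the limiting subdifferential to pass to the limit and invoke Proposition~\ref{P:Crit}.

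The only packaging difference is that the paper bundles the residual estimates into a single subgradient bound for the Lyapunov function $\EE_\beta$ (Lemma~\ref{L:SubgradientBound}) and shows $(\bar x,\bar u,\bar y,\bar x)\in\crit\EE_{\beta_{\ksta}}$ first, whereas you work component-by-component directly on $\Laug_\rho$ and land on $\crit\Laug_\rho$; this is cosmetic since Proposition~\ref{P:Crit} chains both. One small point to tighten: your control of $\norm{y^{k+1}-y^k}$ via uniform regularity actually requires comparing the $k$-th and $(k{-}1)$-th instances of {\bf C2} (as in the paper's Lemma~\ref{L:DualSequence}), giving a two-term bound in $\norm{x^{k+1}-x^k}$ and $\norm{x^k-x^{k-1}}$ rather than a single term --- this is harmless for the limit but matters for the energy-decrease computation in your stabilization step.
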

	Considering semi-algebraic or definable data, and relying on the so-called nonsmooth KL property
	\cite{BDL2006}, we can rule out oscillatory behaviors and  establish the global convergence of the whole
	sequence.
	\begin{theorem}[Global convergence] \label{T:GlobConv}
		Under the premises of Theorem \ref{T:SubConv}, and assuming that $f_{0}$, $F$ and $h$ are
		semi-algebraic, the whole sequence $\Seq{z}{k}$ converges to a point $\left({\bar x} , {\bar u} ,
		{\bar y}\right)$ such that ${\bar x}$ is a critical point of problem {\rm (CM)}.
	\end{theorem}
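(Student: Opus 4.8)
The plan is to establish global convergence by combining the finite-termination of the adaptive regime with a Lyapunov/KL argument, following the now-standard blueprint of convergence proofs under the Kurdyka--{\L}ojasiewicz inequality, but adapted to the asymmetric descent situation described by conditions {\bf C1}--{\bf C4}. First I would invoke (or re-derive from Theorem \ref{T:SubConv} and its ingredients) the fact that the adaptive step $3.3$ is triggered only finitely many times: since $x^{k}$ is bounded and stays, eventually, in the information zone $\Zone$, the penalty sequence $\seq{\rho}{k}$ stabilizes at some $\rho_{\ast}\geq\rho_{0}$ after a finite index $k_{0}$, so $\beta_{k}\equiv\beta_{\ast}=b^{2}/(\rho_{\ast}\gamma)$ for $k\geq k_{0}$. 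From then on the test \eqref{GenericAdap:AdapStep} fails, which is exactly the sufficient-decrease inequality
\begin{equation*}
	\EE_{\beta_{\ast}}\!\left(x^{k+1},u^{k+1},y^{k+1},x^{k}\right) + \tau\norm{x^{k+1}-x^{k}}^{2} \leq \EE_{\beta_{\ast}}\!\left(x^{k},u^{k},y^{k},x^{k-1}\right),\qquad k\geq k_{0}.
\end{equation*}
Since $\seq{z}{k}$ is bounded and $\EE_{\beta_{\ast}}$ is bounded below on the relevant region (via \eqref{WellPosed} together with boundedness), telescoping gives $\sum_{k}\norm{x^{k+1}-x^{k}}^{2}<\infty$, and $\EE_{\beta_{\ast}}$ along the sequence converges to some value $\EE_{\ast}$.

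Next I would set up the KL machinery on the Lyapunov function $\EE_{\beta_{\ast}}$ at a limit point $\zeta^{\ast}=({\bar x},{\bar u},{\bar y},{\bar x})$, which by Proposition \ref{P:Crit} and Theorem \ref{T:SubConv} is a critical point of $\EE_{\beta_{\ast}}$. Here I need two classical estimates in the form of the abstract recipe (as in Bolte--Sabach--Teboulle \cite{BST2014} / Attouch--Bolte--Svaiter): (a) a \emph{subgradient lower bound} of the form $\dist(0,\partial\EE_{\beta_{\ast}}(z^{k+1},x^{k}))\leq \rho_{1}(\norm{x^{k+1}-x^{k}}+\norm{x^{k}-x^{k-1}})$ for some constant $\rho_{1}>0$, assembled from {\bf C2} (the $x$-gradient bound), {\bf C3} (the $u$-subgradient bound), the identity $\nabla_{y}\Laug_{\rho}=F(x)-u=\rho_{k}^{-1}(y^{k+1}-y^{k})$ which itself is controlled by $\norm{x^{k+1}-x^{k}}$ through the multiplier update and the uniform regularity of $F$ on $\Zone$, and the $w$-gradient $2\beta_{\ast}(x^{k}-x^{k+1})$; and (b) the sufficient-decrease inequality above playing the role of the ``$H$ decreases'' estimate. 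Combined with {\bf C4}, which guarantees $\EE_{\beta_{\ast}}(z^{k})\to\EE_{\beta_{\ast}}(\zeta^{\ast})$ (the semicontinuity-to-continuity upgrade needed to apply KL at the limit), the uniformized KL inequality on the (compact, connected) limit-point set then yields, by the usual argument, that $\sum_{k}\norm{x^{k+1}-x^{k}}<\infty$; hence $\seq{x}{k}$ is Cauchy and converges to ${\bar x}$.

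Finally I would propagate convergence from the $x$-block to the full triple. Once $x^{k}\to{\bar x}$, the multiplier update \eqref{GenericAdap:MultiStep} together with $\sum\norm{x^{k+1}-x^{k}}<\infty$ and uniform regularity of $F$ gives $\sum\norm{y^{k+1}-y^{k}}<\infty$, so $y^{k}\to{\bar y}$; and then $u^{k}=F(x^{k})-\rho_{k-1}^{-1}(y^{k}-y^{k-1})$ converges to ${\bar u}:=F({\bar x})$. By Theorem \ref{T:SubConv}, ${\bar x}\in\crit f$, which completes the proof. The main obstacle, and the genuinely novel technical point, is step (a): because condition {\bf C1} only provides a descent estimate controlled by the $x$-increment (not by the $u$- or $y$-increments), the standard symmetric bookkeeping breaks, and one must verify that the \emph{entire} subgradient of $\EE_{\beta_{\ast}}$ — including the $u$- and $y$-components — is nonetheless dominated by a constant times consecutive $x$-increments; this is precisely where the information zone (bounding $\gamma$, $L(F)$, $L(f_{0})$ uniformly) and the specific choice $\beta_{k}=b^{2}/(\rho_{k}\gamma)$ enter, and getting the constants to line up so the KL descent inequality closes is the crux of the argument.
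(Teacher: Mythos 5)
Your proposal is correct and follows essentially the same route as the paper: finite stabilization of $\rho_{k}$ (Lemma \ref{L:SufficentDecrease}) giving the sufficient-decrease inequality for $\EE_{\beta}$, the subgradient bound of Lemma \ref{L:SubgradientBound} assembled from {\bf C2}--{\bf C3} and the multiplier update, the uniformized KL property on the compact limit set, and then finite length of $\seq{x}{k}$ propagated to $\seq{y}{k}$ via \eqref{L:DualSequence:5} and to $\seq{u}{k}$ via the multiplier step. The only details you elide (the stationary case when the limiting value of $\EE_{\beta_{\ksta}}$ is attained at a finite index, and the verification that $\EE_{\beta_{\ksta}}$ is constant on $\omega(z^{0})$) are exactly the routine steps the paper also treats as standard.
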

	\begin{remark} \label{r:rate}
		\begin{itemize}
			\item[$\rm{(i)}$] Standard arguments show that convergence rates of the sequence $\Seq{z}{k}$ of
				the type $\displaystyle O\left(k^{-s}\right)$ could be established with $s > 0$. We refer to
				the technique in \cite{AB2009}.
			\item[$\rm{(ii)}$] The essential tools for convergence are elementary stability questions and
				the nonsmooth Kurdyka-\L ojasiewicz inequality, and thus semi-algebraicity can be replaced
				by definability in a o-minimal structure on $\R,+,\times$.
		\end{itemize}
	\end{remark}
	
\medskip
	
	The next section develops our analytically framework. We present the main ideas underlying the proposed
	algorithm, the main obstacles that need to be addressed, and the key tools necessary for developing the
	convergence analysis of {\bf ALBUM}.

\section{A key lemma: penalty parameter stabilization.} \label{Sec:KeyL}
	In this section, we establish a central result which is essential in our approach. It asserts that the
	sequence of penalty parameters $\seq{\rho}{k}$ becomes stationary and that the information zone $\Zone$
	is reached within finitely many steps. To establish this result, we provide in a preliminary subsection
	some simple but yet fundamental properties.

\subsection{Fundamental properties of Lagrangian sequences.}
	The first elementary result identifies when an iterate enters the information zone $\Zone$.
	\begin{lemma}[Information lemma] \label{L:Stab}
		Let $\Zone$ be a given information zone. Let $\Seq{z}{k}$ be a Lagrangian sequence and assume that
		the multiplier sequence $\Seq{y}{k}$ is bounded. Then, there exists an index $\kinfo \in \nn$, such
		that $x^{k} \in \Zone$ for all $k \geq \kinfo$.
	\end{lemma}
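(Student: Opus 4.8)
The plan is to argue by contradiction: suppose the iterates enter $\Zone$ only finitely often, equivalently $x^{k}\notin\Zone$ for infinitely many $k$. Whenever $x^{k+1}\notin\Zone$, Step 3.3 of \PM forces $\rho_{k+1}=\rho_{k}+\delta$, so along that infinite subset of indices the penalty parameter increases by $\delta$ each time; hence $\rho_{k}\to+\infty$. I would then exploit the definition of the information zone, \eqref{zone}: since $x^{k+1}\notin\Zone$ we have $\dist\bigl(F(x^{k+1}),\dom h\bigr)>{\bar d}$, and because $u^{k+1}\in\dom h$ (as $h(u^{k+1})<\infty$, which follows from \ca applied at step $k$ together with well-posedness \eqref{WellPosed}), this yields the lower bound
\begin{equation*}
	\norm{F(x^{k+1})-u^{k+1}} \;\geq\; \dist\bigl(F(x^{k+1}),\dom h\bigr) \;>\; {\bar d}.
\end{equation*}
So on this subsequence the infeasibility residual stays bounded away from zero.

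Next I would derive a contradiction with boundedness of $\Seq{y}{k}$. From the multiplier update \eqref{GenericAdap:MultiStep}, $y^{k+1}-y^{k}=\rho_{k}\bigl(F(x^{k+1})-u^{k+1}\bigr)$, hence
\begin{equation*}
	\norm{y^{k+1}-y^{k}} \;=\; \rho_{k}\,\norm{F(x^{k+1})-u^{k+1}} \;>\; \rho_{k}\,{\bar d}
\end{equation*}
for each of the infinitely many indices $k$ with $x^{k+1}\notin\Zone$. Since $\rho_{k}\to+\infty$ along these indices, the increments $\norm{y^{k+1}-y^{k}}$ are unbounded, which contradicts the assumed boundedness of $\Seq{y}{k}$. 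Therefore $x^{k}\notin\Zone$ can happen only for finitely many $k$, and setting $\kinfo$ to be one more than the largest such index gives $x^{k}\in\Zone$ for all $k\geq\kinfo$.

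The main obstacle — and the step needing the most care — is establishing cleanly that $u^{k+1}\in\dom h$ so that $\dist(F(x^{k+1}),\dom h)\le\norm{F(x^{k+1})-u^{k+1}}$ is legitimate; this is where \ca (partial descent, which keeps $\Laug_{\rho_{k}}(x^{k+1},u^{k+1},y^{k})$ finite and hence $h(u^{k+1})<\infty$) and the blanket well-posedness assumption \eqref{WellPosed} enter. A secondary point is that the argument only uses the branch of Step 3.3 triggered by $x^{k+1}\notin\Zone$; the other trigger \eqref{GenericAdap:AdapStep} can only add \emph{more} increases to $\rho_{k}$, so the monotonicity $\rho_{k+1}\ge\rho_{k}$ from \eqref{parcb} suffices and no separate treatment of that clause is required here. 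One should also note that $\rho_{k}\to+\infty$ needs only that the triggering happens infinitely often, not at consecutive indices, since $\seq{\rho}{k}$ is nondecreasing.
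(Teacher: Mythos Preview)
Your proof is correct and follows essentially the same approach as the paper's: argue by contradiction, use Step 3.3 to get $\rho_{k}\to+\infty$, combine the multiplier update \eqref{GenericAdap:MultiStep} with $u^{k+1}\in\dom h$ and the defining inequality \eqref{zone} of the information zone, and reach a contradiction with the boundedness of $\Seq{y}{k}$. The only cosmetic difference is the direction in which the contradiction is phrased: the paper bounds $\dist(F(x^{k}),\dom h)\le \norm{y^{k}-y^{k-1}}/\rho_{k-1}\to 0$ and contradicts $>\bar d$, whereas you bound $\norm{y^{k+1}-y^{k}}>\rho_{k}\bar d\to+\infty$ and contradict boundedness---these are two readings of the same identity.
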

	\begin{proof}
		We argue by contradiction and assume that $x^{k} \notin \Zone$ for $k \in I$ where $I$ is an
		infinite set. On one hand, by the definition of the information zone $\Zone$, we have for all $k \in
		I$ that
		\begin{equation} \label{L:Stab:1}
			\dist\left(F\left(x^{k}\right) , \dom{h}\right) > \bar{d}.
		\end{equation}
		On the other hand, for all $k \in \nn$ we have
		\begin{align*}
			\dist\left(F\left(x^{k}\right) , \dom{h}\right) & = \inf_{u \in \dom{h}} \norm{u - F\left(x^{k}
			\right)} \\
			& \leq \norm{u^{k} - F\left(x^{k}\right)} \hspace{1.2in} \left[u^{k} \in \dom{h}\right] \\
			& = \frac{1}{\rho_{k - 1}}\norm{y^{k} - y^{k - 1}} \hspace{1.03in}
			\left[\eqref{GenericAdap:MultiStep}\right] \\
			& \leq \frac{M}{\rho_{k - 1}}. \hspace{1.78in}\left[\Seq{y}{k} \;\text{is assumed bounded}
			\right]
		\end{align*}
		By Step 3.3 of the algorithm and the fact that $I$ is an infinite set, it follows that $\rho_{k}
		\rightarrow \infty$ as $k \rightarrow \infty$, thus there exists $\kinfo \in \nn$ such that
		\begin{equation*}
			\dist\left(F(x^{k}), \dom{h}\right) \leq \frac{M}{\rho_{k}} \leq \bar{d}, \quad \forall \,\, k
			\geq \kinfo,
		\end{equation*}
		which obviously contradicts \eqref{L:Stab:1}.
	\end{proof}
	The next result provides an important relation on the sequences $\Seq{x}{k}$ and $\Seq{y}{k}$ produced
	by {\bf ALBUM} and reflects the min-max dynamics at the root of these methods.
	\begin{lemma} \label{L:DescentProperty}
        Let $\Seq{z}{k}$ be a Lagrangian sequence. The following inequality holds true for any $k \geq 0$
        \begin{equation*}
	         \Laug_{\rho_{k}}\left(x^{k + 1} , u^{k + 1} , y^{k+1}\right) - \Laug_{\rho_{k}}\left(x^{k} ,
	         u^{k} , y^{k}\right) \leq \frac{1}{\rho_{k}}\norm{y^{k + 1} - y^{k}}^{2} - \frac{a}{2}
	         \norm{x^{k + 1} - x^{k}}^{2}.
        \end{equation*}
    \end{lemma}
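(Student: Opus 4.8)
The plan is to split the left-hand side difference into a \emph{primal part} (changing $(x,u)$ at fixed $y=y^{k}$) and a \emph{dual part} (changing $y$ at fixed $(x^{k+1},u^{k+1})$), and to control each piece separately. Write
\begin{align*}
	\Laug_{\rho_{k}}\left(x^{k+1},u^{k+1},y^{k+1}\right) - \Laug_{\rho_{k}}\left(x^{k},u^{k},y^{k}\right)
	&= \Big[\Laug_{\rho_{k}}\left(x^{k+1},u^{k+1},y^{k+1}\right) - \Laug_{\rho_{k}}\left(x^{k+1},u^{k+1},y^{k}\right)\Big] \\
	&\quad + \Big[\Laug_{\rho_{k}}\left(x^{k+1},u^{k+1},y^{k}\right) - \Laug_{\rho_{k}}\left(x^{k},u^{k},y^{k}\right)\Big].
\end{align*}
The second bracket is bounded above by $-\frac{a}{2}\norm{x^{k+1}-x^{k}}^{2}$ directly from condition \ca (equivalently Definition \ref{D:LagAlg}(i)), since that inequality holds with $\rho=\rho_{k}$ and the same $y^{k}$. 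So it remains to show the first bracket equals exactly $\frac{1}{\rho_{k}}\norm{y^{k+1}-y^{k}}^{2}$.

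For the dual part, I would use the explicit form of $\Laug_{\rho}$ in \eqref{D:AugLAc}. The only $y$-dependence is through the linear term $\act{y,F(x)-u}$, so at the fixed point $(x^{k+1},u^{k+1})$,
\begin{equation*}
	\Laug_{\rho_{k}}\left(x^{k+1},u^{k+1},y^{k+1}\right) - \Laug_{\rho_{k}}\left(x^{k+1},u^{k+1},y^{k}\right)
	= \act{y^{k+1}-y^{k},\, F\left(x^{k+1}\right)-u^{k+1}}.
\end{equation*}
Now invoke the multiplier update \eqref{GenericAdap:MultiStep}, which gives $F(x^{k+1})-u^{k+1} = \frac{1}{\rho_{k}}\left(y^{k+1}-y^{k}\right)$. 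Substituting yields $\act{y^{k+1}-y^{k},\,\frac{1}{\rho_{k}}(y^{k+1}-y^{k})} = \frac{1}{\rho_{k}}\norm{y^{k+1}-y^{k}}^{2}$, which is precisely the first term on the right-hand side of the claimed inequality. Adding the two bounds completes the proof.

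There is essentially no hard part here: the lemma is a bookkeeping identity combining the defining descent property \ca with the algebraic form of the multiplier step. The only thing to be careful about is that the penalty parameter appearing throughout is $\rho_{k}$ (the value used at iteration $k$), matching exactly the subscript in both \eqref{GenericAdap:MultiStep} and the statement of \ca for that iteration; no stabilization of $\seq{\rho}{k}$ is needed yet, so the argument is valid for every $k\ge 0$ unconditionally. The point worth emphasizing is conceptual rather than technical: the $+\frac{1}{\rho_{k}}\norm{y^{k+1}-y^{k}}^{2}$ term is the signature of the min-max dynamics — the augmented Lagrangian can \emph{increase} across a full primal-dual step — and this is exactly the obstacle that the later Lyapunov analysis with $\EE_{\beta}$ is designed to absorb.
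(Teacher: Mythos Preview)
Your proof is correct and follows exactly the same approach as the paper: split the difference into a primal part (controlled by condition \ca) and a dual part (computed explicitly from the linear $y$-dependence of $\Laug_{\rho}$ together with the multiplier update \eqref{GenericAdap:MultiStep}), then add. The paper's proof is essentially identical, down to the order of the two pieces and the use of $F(x^{k+1})-u^{k+1}=\rho_{k}^{-1}(y^{k+1}-y^{k})$.
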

    \begin{proof}
 		From condition {\bf C1},
        \begin{equation} \label{L:DescentProperty:1}
        		\Laug_{\rho_{k}}\left(x^{k + 1} , u^{k + 1} , y^{k}\right) - \Laug_{\rho_{k}}\left(x^{k} , u^{k}
        		, y^{k}\right) \leq -\frac{a}{2}\norm{x^{k + 1} - x^{k}}^{2}.
		\end{equation}
        Using the definition of $\Laug_{\rho}$ (\cf \eqref{D:AugLAc}) we have from
        \eqref{GenericAdap:MultiStep} that
        \begin{align*}
	        	\Laug_{\rho_{k}}\left(x^{k + 1} , u^{k + 1} , y^{k+1}\right) - \Laug_{\rho_{k}}\left(x^{k + 1} ,
	        	u^{k + 1} , y^{k}\right) & = \act{y^{k + 1} - y^{k} , F\left(x^{k + 1}\right) - u^{k + 1}} \\
	        	& = \frac{1}{\rho_{k}}\norm{y^{k + 1} - y^{k}}^{2}.
        \end{align*}
        Adding the latter to \eqref{L:DescentProperty:1} yields the desired result.
    \end{proof}
	The next result relates the evolution of the multiplier sequence $\Seq{y}{k}$ with that of the primal
	sequence $\Seq{x}{k}$.
	\begin{lemma} \label{L:DualSequence}
        Let $\Seq{z}{k}$ be a Lagrangian sequence. Assume that the multiplier sequence $\Seq{y}{k}$ is
        bounded by some $\Lambda > 0$. Then, the following inequality holds true for any $k \geq \kinfo$,
		\begin{equation} \label{L:DualSequence:00}
        		\norm{y^{k + 1} - y^{k}}^{2} \leq d_{1}\norm{x^{k + 1} - x^{k}}^{2} + d_{2}\norm{x^{k} -
        		x^{k - 1}}^{2},
		\end{equation}
		where
		\begin{equation} \label{L:DualSequence:0}
        		d_{1} = \frac{2}{\gamma^2}\left(L(f_{0}) + L(F)\Lambda + b\right)^{2} \quad \text{and} \quad
        		d_{2} = \frac{2b^{2}}{\gamma^{2}}.
		\end{equation}
    \end{lemma}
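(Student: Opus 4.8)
The plan is to reduce everything to a bound on the quantity $\nabla F(x^{k+1})^{T}(y^{k+1}-y^{k})$ and then to convert such a bound into the desired estimate on $\norm{y^{k+1}-y^{k}}$ by invoking the uniform regularity of $F$ on $\Zone$. The point of departure is purely algebraic: substituting the multiplier step \eqref{GenericAdap:MultiStep}, namely $y^{k+1}=y^{k}+\rho_{k}(F(x^{k+1})-u^{k+1})$, into the expression \eqref{critAc1} for $\nabla_{x}\Laug_{\rho}$ gives
\[
\nabla_{x}\Laug_{\rho_{k}}\left(x^{k+1},u^{k+1},y^{k}\right)=\nabla f_{0}\left(x^{k+1}\right)+\nabla F\left(x^{k+1}\right)^{T}y^{k+1},
\]
and, reading the same identity one step earlier,
\[
\nabla_{x}\Laug_{\rho_{k-1}}\left(x^{k},u^{k},y^{k-1}\right)=\nabla f_{0}\left(x^{k}\right)+\nabla F\left(x^{k}\right)^{T}y^{k}.
\]

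Subtracting these two identities and inserting the telescoping term $\nabla F(x^{k+1})^{T}y^{k}$ yields
\[
\nabla F\left(x^{k+1}\right)^{T}\left(y^{k+1}-y^{k}\right)=\nabla_{x}\Laug_{\rho_{k}}\left(x^{k+1},u^{k+1},y^{k}\right)-\nabla_{x}\Laug_{\rho_{k-1}}\left(x^{k},u^{k},y^{k-1}\right)-\left(\nabla f_{0}\left(x^{k+1}\right)-\nabla f_{0}\left(x^{k}\right)\right)-\left(\nabla F\left(x^{k+1}\right)-\nabla F\left(x^{k}\right)\right)^{T}y^{k}.
\]
I would then bound the four terms on the right-hand side separately: the first two by condition {\bf C2} applied at iterations $k$ and $k-1$, which contribute $b\norm{x^{k+1}-x^{k}}$ and $b\norm{x^{k}-x^{k-1}}$; the third by the $L(f_{0})$-Lipschitz continuity of $\nabla f_{0}$ on $\Zone$ (Assumption \ref{AssumptionB}(iii)); and the fourth by the $L(F)$-Lipschitz continuity of $\nabla F$ on $\Zone$ (Assumption \ref{AssumptionB}(ii)) together with $\norm{y^{k}}\leq\Lambda$. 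This is exactly where Lemma \ref{L:Stab} enters: for $k\geq\kinfo$ both $x^{k}$ and $x^{k+1}$ lie in $\Zone$, so the regularity constants of Assumption \ref{AssumptionB} are legitimately available. Collecting the four estimates gives $\norm{\nabla F(x^{k+1})^{T}(y^{k+1}-y^{k})}\leq\left(L(f_{0})+L(F)\Lambda+b\right)\norm{x^{k+1}-x^{k}}+b\norm{x^{k}-x^{k-1}}$.

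To finish, apply uniform regularity of $F$ on $\Zone$ with constant $\gamma$ (Assumption \ref{AssumptionB}(i)) with the vector $v=y^{k+1}-y^{k}$ and the point $x=x^{k+1}\in\Zone$, which gives $\gamma\norm{y^{k+1}-y^{k}}\leq\norm{\nabla F(x^{k+1})^{T}(y^{k+1}-y^{k})}$. Squaring the combined inequality and using $(\alpha+\beta)^{2}\leq 2\alpha^{2}+2\beta^{2}$ produces \eqref{L:DualSequence:00} with precisely the constants $d_{1}$ and $d_{2}$ displayed in \eqref{L:DualSequence:0}.

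The computation itself is elementary bookkeeping; the only genuinely delicate point — and the reason the statement is restricted to indices $k\geq\kinfo$ — is that the uniform regularity modulus $\gamma$ and the Lipschitz constants $L(f_{0}),L(F)$ are only assumed to hold on $\Zone$, so one must first know the iterates have entered $\Zone$, which is supplied by the Information Lemma \ref{L:Stab}. The other idea worth isolating is that one should not try to estimate $y^{k+1}-y^{k}$ directly from the multiplier update \eqref{GenericAdap:MultiStep}, but rather combine two consecutive instances of the stationarity bound {\bf C2} through the algebraic identity above — this is what brings in the second term $\norm{x^{k}-x^{k-1}}^{2}$ and makes the constants fall out cleanly.
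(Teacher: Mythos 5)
Your proof is correct and follows essentially the same route as the paper: the paper introduces $\Delta_{k}=\nabla F(x^{k+1})^{T}y^{k+1}-\nabla F(x^{k})^{T}y^{k}$ and bounds it from below (via uniform regularity and the Lipschitz bound on $\nabla F$) and from above (via condition {\bf C2} at iterations $k$ and $k-1$ together with the Lipschitz bound on $\nabla f_{0}$), which is exactly your telescoping identity read as two separate inequalities. The constants, the role of Lemma \ref{L:Stab} in legitimizing the constants on $\Zone$, and the final squaring step all match.
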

    \begin{proof}
	    	For convenience, we define
        \begin{equation*}
        		\Delta_{k} := \nabla F\left(x^{k + 1}\right)^{T}y^{k + 1} - \nabla F\left(x^{k}\right)^{T}
        		y^{k}.
        \end{equation*}
       	Then, by Lemma \ref{L:Stab} and Assumption \ref{AssumptionB}(i) and (ii) which warrants that $F$ is
       	uniform regular on $\Zone$ with constant $\gamma$ and $\nabla F$ is Lipschitz continuous on $\Zone$,
       	respectively, it follows for all $k \geq \kinfo$ that
	    	\begin{align}
    			\norm{\Delta_{k}} & = \norm{\nabla F\left(x^{k + 1}\right)^{T}\left(y^{k + 1} - y^{k}\right) +
    			\left(\nabla F\left(x^{k + 1}\right) - \nabla F\left(x^{k}\right)\right)^{T}y^{k}} \nonumber \\
	        & \geq \gamma\norm{y^{k + 1} - y^{k}} - L(F)\Lambda\norm{x^{k + 1} - x^{k}}.
	        \label{L:DualSequence:2}
        \end{align}
    		On the other hand, from the definition of $\Laug_{\rho}$ (see \eqref{D:AugLAc}), we have that
    		\begin{align*}
    			\nabla_{x} \Laug_{\rho_{k}}\left(x^{k + 1} , u^{k + 1} , y^{k}\right) & = \nabla f_{0}\left(x^{k
    			+ 1}\right) + \nabla F\left(x^{k + 1}\right)^{T}\left(y^{k} + \rho_{k}\left(F\left(x^{k + 1}
    			\right) - u^{k + 1}\right)\right) \nonumber \\
    			& = \nabla f_{0}\left(x^{k + 1} \right) + \nabla F\left(x^{k + 1}\right)^{T}y^{k + 1},
    		\end{align*}
    		where the second equality uses the the multiplier update given in \eqref{GenericAdap:MultiStep}.
    		Thus, using the latter, thanks to condition {\bf C2} we obtain for all $k \geq 0$ that there exists
    		$b > 0$ such that
    		\begin{equation}\label{lagb}
    			\norm{\nabla f_{0}\left(x^{k + 1}\right) + \nabla F\left(x^{k + 1}\right)^{T}y^{k + 1}} \leq
    			b\norm{x^{k + 1} - x^{k}}.
    		\end{equation}
    		Therefore, we obtain for all $k \geq \kinfo$,
    		\begin{align}
			\norm{\Delta_{k}} & = \norm{\nabla F\left(x^{k + 1}\right)^{T}y^{k + 1} - \nabla F\left(x^{k}
			\right)^{T}y^{k}} \nonumber \\
            	& = \norm{\nabla F\left(x^{k + 1}\right)^{T}y^{k + 1} + \nabla f_{0}\left(x^{k + 1}\right) -
            	\nabla F\left(x^{k}\right)^{T}y^{k} - \nabla f_{0}\left(x^{k}\right) + \nabla f_{0}\left(x^{k}
            	\right) - \nabla f_{0}\left(x^{k + 1}\right)} \nonumber \\
         	& \leq \norm{\nabla F\left(x^{k + 1}\right)^{T}y^{k + 1} + \nabla f_{0}\left(x^{k + 1}\right)} +
         	\norm{ \nabla F\left(x^{k}\right)^{T}y^{k} + \nabla f_{0}\left(x^{k}\right)} \nonumber \\
	 	 	& + \norm{\nabla f_{0}\left(x^{k + 1}\right) - \nabla f_{0}\left(x^{k}\right)} \nonumber \\
            	& \leq \left(L(f_{0}) + b\right)\norm{x^{k + 1} - x^{k}} + b\norm{x^{k} - x^{k - 1}},
            	\label{L:DualSequence:4}
       	\end{align}
        where the last inequality uses \eqref{lagb}, and the Lipschitz continuity of $\nabla f_{0}$ over
        $\Zone$ (see Assumption \ref{AssumptionB}(iii)). Combining \eqref{L:DualSequence:2} and
        \eqref{L:DualSequence:4}, we thus obtain for any $k \geq \kinfo$
        \begin{equation} \label{L:DualSequence:5}
	        	\gamma\norm{y^{k + 1} - y^{k}} \leq \left(L(f_{0}) + L(F)\Lambda + b\right)\norm{x^{k + 1} -
	        	x^{k}} + b\norm{x^{k} - x^{k - 1}}.
        \end{equation}
        Therefore, squaring the last inequality and using the fact that $\left(r + s\right)^{2} \leq 2r^{2}
        + 2s^{2}$ for all $r , s \in \rr$, the claimed assertion follows.
    \end{proof}

\subsection{Finite stabilization of the penalty sequence $\seq{\rho}{k}$.}
	We are now ready to establish the promised key lemma which asserts that the sequence of penalizing
	parameters $\seq{\rho}{k}$ becomes stationary from a certain iteration-index $\ksta$. A ``Lyapunov
	zone" for $\EE_{\beta}$ is thus reached within finitely many steps.
	\begin{lemma}[Finite stabilization of the sequence $\seq{\rho}{k}$] \label{L:SufficentDecrease}
		Let $\Seq{z}{k}$ be a Lagrangian sequence. Assume that the multiplier sequence $\Seq{y}{k}$ is
		bounded. Then, there exists an index $\ksta \in \nn$ such that
		\begin{equation*}
			\rho_{k} = \rho_{\ksta}, \quad \forall \,\, k \geq \ksta.
		\end{equation*}
		Moreover, for all $k \geq \ksta$ we have $x^{k} \in \Zone$, and there exists $\tau > 0$ such that
		\begin{equation} \label{L:SufficentDecrease:0}
 			\tau\norm{x^{k + 1} - x^{k}}^{2} \leq \EE_{\beta_{\ksta}} \left(x^{k} , u^{k} , y^{k} ,
 			x^{k -1}\right) - \EE_{\beta_{\ksta}}\left(x^{k + 1} , u^{k + 1} , y^{k + 1} , x^{k}
 			\right).
		\end{equation}
	\end{lemma}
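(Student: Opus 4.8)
The plan is to show that $\rho_k$ cannot increase infinitely often, and then to read off the remaining conclusions from the algorithm's own stopping test. The starting point is Lemma \ref{L:DescentProperty}, which gives, for every $k$,
\[
	\Laug_{\rho_{k}}\left(x^{k+1},u^{k+1},y^{k+1}\right) - \Laug_{\rho_{k}}\left(x^{k},u^{k},y^{k}\right) \leq \frac{1}{\rho_{k}}\norm{y^{k+1}-y^{k}}^{2} - \frac{a}{2}\norm{x^{k+1}-x^{k}}^{2}.
\]
The idea is to absorb the troublesome $\rho_k^{-1}\norm{y^{k+1}-y^{k}}^{2}$ term by adding the proximal term $\beta_k\norm{x-w}^{2}$ that defines $\EE_{\beta_k}$, i.e. to pass from $\Laug$ to the Lyapunov function $\EE$. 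Concretely, I would write out $\EE_{\beta_{k}}(x^{k},u^{k},y^{k},x^{k-1}) - \EE_{\beta_{k}}(x^{k+1},u^{k+1},y^{k+1},x^{k})$, substitute the Lemma \ref{L:DescentProperty} bound for the $\Laug$ difference, substitute the Lemma \ref{L:DualSequence} bound $\norm{y^{k+1}-y^{k}}^{2}\leq d_{1}\norm{x^{k+1}-x^{k}}^{2}+d_{2}\norm{x^{k}-x^{k-1}}^{2}$ for the multiplier increments, and use the telescoping structure of the $\beta_k\norm{x^{k+1}-x^k}^2$ terms. With $\beta_k=b^2/(\rho_k\gamma)$ and $d_2=2b^2/\gamma^2$ chosen precisely so that the $\norm{x^{k}-x^{k-1}}^2$ coefficients cancel against $\beta_k$, this should yield a bound of the form
\[
	\EE_{\beta_{k}}\left(x^{k},u^{k},y^{k},x^{k-1}\right) - \EE_{\beta_{k}}\left(x^{k+1},u^{k+1},y^{k+1},x^{k}\right) \geq \left(\frac{a}{2} - \frac{d_{1}}{\rho_{k}} - \beta_{k}\right)\norm{x^{k+1}-x^{k}}^{2}.
\]

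Since $\rho_k\geq\rho_0$ and $\rho_k\to\infty$ whenever it increases infinitely often, and since $\beta_k=b^2/(\rho_k\gamma)\to 0$ and $d_1/\rho_k\to 0$ in that scenario, the coefficient $\frac{a}{2}-\frac{d_1}{\rho_k}-\beta_k$ eventually exceeds the fixed $\tau\in(0,a/2)$ chosen in Step 3.3. Thus for $k$ large enough the adaptive test \eqref{GenericAdap:AdapStep} must fail (its inequality is violated), and, combined with Lemma \ref{L:Stab} which puts $x^{k}\in\Zone$ for $k\geq\kinfo$, the algorithm sets $\rho_{k+1}=\rho_k$ — contradicting the assumption that $\rho_k$ increases infinitely often. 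Hence there is an index $\ksta$ past which $\rho_k\equiv\rho_{\ksta}$ and $x^k\in\Zone$; and for $k\geq\ksta$ the \emph{negation} of the Step 3.3 test is exactly the asserted inequality \eqref{L:SufficentDecrease:0} with $\beta_{\ksta}=b^2/(\rho_{\ksta}\gamma)$.

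The main obstacle I anticipate is the bookkeeping in the telescoping step: the proximal terms in $\EE_{\beta_k}$ at consecutive iterations involve $\norm{x^k-x^{k-1}}^2$ and $\norm{x^{k+1}-x^k}^2$ with \emph{different} coefficients $\beta_k$ (unless $\rho_k$ has already stabilized), so the cancellation against $d_2$ from Lemma \ref{L:DualSequence} is only exact once $\rho$ is constant, and must be handled as an inequality (using $\beta_{k+1}\leq\beta_k$, i.e. $\rho$ nondecreasing) before stabilization. One must check that this one-sided slack points in the favorable direction, so that the lower bound on the $\EE$-decrease still holds with the right sign during the transient phase. A second, minor point is that Lemma \ref{L:DualSequence} is only valid for $k\geq\kinfo$, so the whole argument runs for $k\geq\kinfo$; this is harmless since we only need the eventual behavior. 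Once these are in place, the conclusion is immediate: choose $\tau$ and observe that the coefficient crosses $\tau$ after finitely many steps, forcing stabilization.
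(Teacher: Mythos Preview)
Your proposal is correct and follows exactly the paper's approach: combine Lemmas~\ref{L:DescentProperty} and~\ref{L:DualSequence}, expand the $\EE_{\beta_k}$-difference, use the choice $\beta_k = d_2/\rho_k$ to cancel the $\norm{x^{k}-x^{k-1}}^{2}$ term, and argue by contradiction that $\rho_k$ cannot increase indefinitely. Your anticipated obstacle is a non-issue: the adaptive test \eqref{GenericAdap:AdapStep} compares $\EE_{\beta_k}$ at \emph{both} points with the \emph{same} $\beta_k$, so the difference reads $V_k = \Laug_{\rho_k}(z^k) - \Laug_{\rho_k}(z^{k+1}) + \beta_k\norm{x^{k}-x^{k-1}}^{2} - \beta_k\norm{x^{k+1}-x^{k}}^{2}$ with a single coefficient throughout, and the cancellation against $d_2/\rho_k$ is exact for every $k\geq\kinfo$ with no need for any monotonicity argument on $\beta_k$.
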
	
	\begin{proof}
		Lemma \ref{L:Stab} warrants that $x^{k} \in \Zone$ for all $k \geq \kinfo$ and by applying Lemma
		\ref{L:DescentProperty},  we obtain for all $k \geq 0$ that
		\begin{equation} \label{L:SufficentDecrease:1}
			\Laug_{\rho_{k}}\left(z^{k}\right) - \Laug_{\rho_{k}}\left(z^{k + 1}\right) \geq \frac{a}{2}
			\norm{x^{k + 1} - x^{k}}^{2} - \frac{1}{\rho_{k}}\norm{y^{k + 1} - y^{k}}^{2}.
		\end{equation}
		Using Lemma \ref{L:DualSequence}, we get for all $k \geq \kinfo$,
		\begin{equation} \label{L:SufficentDecrease:2}
			\norm{y^{k + 1} - y^{k}}^{2} \leq d_{1}\norm{x^{k + 1} - x^{k}}^{2} + d_{2}\norm{x^{k} - x^{k -
			1}}^{2},
		\end{equation}
		where $d_{1}$ and $d_{2}$ are given in \eqref{L:DualSequence:0}. Hence, by combining
		\eqref{L:SufficentDecrease:1} and \eqref{L:SufficentDecrease:2}, it follows for all $k \geq \kinfo$,
		that
		\begin{equation} \label{L:SufficentDecrease:3}
			\Laug_{\rho_{k}}\left(z^{k}\right) - \Laug_{\rho_{k}}\left(z^{k + 1}\right) \geq \left(\frac{a}
			{2} - \frac{d_{1}}{\rho_{k}}\right)\norm{x^{k + 1} - x^{k}}^{2} - \frac{d_{2}}{\rho_{k}}
			\norm{x^{k} - x^{k - 1}}^{2}.
		\end{equation}
		Using the definition of $\EE_{\beta}$ (see \eqref{D:Lyap}) and setting $\beta := \beta_{k}$ for all
		$k \geq 0$, we get
		\begin{align}
			V_{k} & := \EE_{\beta_{k}}\left(x^{k} , u^{k} , y^{k} , x^{k - 1}\right) - \EE_{\beta_{k}}
			\left(x^{k + 1} , u^{k + 1} , y^{k + 1} , x^{k}\right) \nonumber \\
    			& = \Laug_{\rho_{k}}\left(z^{k}\right) - \Laug_{\rho_{k}}\left(z^{k + 1}\right) + \beta_{k}
    			\norm{x^{k} - x^{k - 1}}^{2} - \beta_{k}\norm{x^{k + 1} - x^{k}}^{2}.
		\end{align}
		Therefore, with \eqref{L:SufficentDecrease:3}, we deduce that for all $k \geq \kinfo$
		\begin{align}
        		V_{k} & \geq \left(\frac{a}{2} - \frac{d_{1}}{\rho_{k}} - \beta_{k}\right)\norm{x^{k + 1} -
        		x^{k}}^{2} - \left(\frac{d_{2}}{\rho_{k}}  - \beta_{k}\right)\norm{x^{k - 1} - x^{k}}^{2}
        		\nonumber \\
        		& = \left(\frac{a}{2} - \frac{d_{1}}{\rho_{k}} - \beta_{k}\right)\norm{x^{k + 1} - x^{k}}^{2},
        		\label{L:SufficentDecrease:4}
		\end{align}
		where the equality follows from the definition of $\beta_{k}$ given in Step 3.3 of {\bf ALBUM}.
		Hence, using \eqref{L:DualSequence}, we get that
		\begin{equation*}
			\beta_{k} = \frac{d_{2}}{\rho_{k}}= \frac{2b^{2}}{\rho_{k}\gamma^{2}}.
		\end{equation*}
	 	In addition, one has for all $k \geq \kinfo$ that
		\begin{equation}
			\frac{a}{2} - \frac{d_{1}}{\rho_{k}} - \beta_{k} = \frac{a}{2} - \frac{d_{1} + d_{2}}{\rho_{k}}.
		\end{equation}
		Thus \eqref{L:SufficentDecrease:4} rewrites
		\begin{equation} \label{L:SufficentDecrease:5}
        		V_{k} \geq \left(\frac{a}{2} - \frac{d_{1} + d_{2}}{\rho_{k}}\right)\norm{x^{k + 1} - x^{k}}
        		^{2}.
		\end{equation}		
		The sequence $\seq{\rho}{k}$ cannot increase indefinitely else we would get from
		\eqref{L:SufficentDecrease:5} that
		\begin{equation*}
			\EE_{\beta_{k}}\left(x^{k} , u^{k} , y^{k} , x^{k -1}\right) - \EE_{\beta_{k}}\left(x^{k +
			1} , u^{k + 1} , y^{k + 1} , x^{k}\right) \geq \tau \norm{x^{k + 1} - x^{k}}^{2},
		\end{equation*}
		for all $k$ sufficiently large, where $\tau > 0$ is the parameter given in the {\bf ALBUM} scheme.
		Thus we obtain the existence of an iteration-index $\ksta \geq \kinfo$ such that $\rho_{k} =
		\rho_{\ksta}$ for all $k \geq \ksta$, and the desired result follows.
	\end{proof}
	\begin{remark}[Adaptive process and the dynamics of $\seq{\rho}{k}$] \label{liap}
		Lemma \ref{L:SufficentDecrease} establishes that {\bf ALBUM}, within Step 3.3, relies on two
		fundamental tests:
		\begin{itemize}
			\item[--] a weak\footnote{Weak because we do not ask for actual feasibility.} feasibility test,
				\ie $x^{k} \in \Zone$,
			\item[--] a surrogate\footnote{Surrogate because we do not ask for the augmented Lagrangian 	
				function $\Laug_{\rho}$ to be Lyapunov, but rather that the auxiliary function $\EE_{\beta}$
				is Lyapunov.} descent test for $\EE_{\beta}$ which implicitly tunes the algorithm to match
				the natural step-sizes attached to $f_{0}$ and $F$.
		\end{itemize}
		Lemma \ref{L:SufficentDecrease} tells us that $\rho_{k}$ can be automatically tuned to an acceptable
		value $\rho_{\ksta}$ in finitely many steps. As a consequence, and it is a fundamental fact, we have
		the descent property:		
		\begin{equation*}
			\EE_{\beta_{\ksta}}\left(x^{k} , u^{k} , y^{k} , x^{k -1}\right) - \EE_{\beta_{\ksta}}\left(x^{k
			+ 1} , u^{k + 1} , y^{k + 1} , x^{k}\right) \geq \tau \norm{x^{k + 1} - x^{k}}^{2}, \quad
			\forall \,\, k \geq \ksta.
		\end{equation*}	
		In short and to conclude, one could say that the adaptive protocol leads to the finite
		identification of the information zone and to a sufficient descent property.
	\end{remark}
	\begin{remark}
		One observes from the proof, that the descent property on $\EE_{\beta}$ is ensured once we know that
		\begin{equation} \label{descond}
			\frac{a}{2} - \frac{d_{1} + d_{2}}{\rho_{k}} > \tau, \quad \forall \,\, k \geq \kinfo.
		\end{equation}
		In order to shunt the surrogate descent test, it is thus tempting to fix a value $\rho_{0}$ a priori
		(before running the method), so that the above holds directly. Yet it is important to understand
		that this cannot be done in general, since $d_{1}$ (\cf \eqref{L:DualSequence:0}) is a constant that
		{\em depends} on a bound $\Lambda$ of the sequence $\Seq{y}{k}$ which by itself depends on
		$\seq{\rho}{k}$!
	\end{remark}
\medskip

	\begin{remark}[Special case with $F$ assumed to be linear] \label{r:lin}
		\begin{itemize}
			\item[(i)] In that case the dependence of $d_{1}$ with $\Lambda$ given in Lemma
				\ref{L:DualSequence} {\em disappears}. This allows for a more direct and simplified
				approach. Indeed, exploiting the linearity of $F$, the inequality \eqref{L:DualSequence:2}
				reduces to $\norm{\Delta_{k}} \geq \gamma\norm{y^{k + 1} - y^{k}}$ for all $k \geq 0$, where
				here $\gamma \equiv \sqrt{\lambda_{\min}(FF^T)} > 0$, \cf Remark \ref{r:info}. Therefore,
				the boundedness of $\Seq{y}{k}$ is not needed, and it immediately follows that the proof of
				inequality \eqref{L:DualSequence:00} holds true in Lemma \ref{L:DualSequence} for all $k
				\geq 0$, with
				\begin{equation} \label{dlin}
					d_{1} = \frac{2}{\lambda_{\min}(FF^T)}\left(L(f_{0})  + b\right)^{2} \quad \text{and}
					\quad d_{2} = \frac{2b^{2}}{\lambda_{\min}(FF^T)}.
				\end{equation}
				Secondly, as mentioned before (\cf Remark \ref{r:info}) the information zone can be taken as
				the whole space \ie $\Zone \equiv \rr^{n}$, and in that case the adaptive regime is not
				anymore necessary. Thus we set $\rho_{k} \equiv \rho > 0$ for all $k \in \nn$, and Step 3.3
				of {\bf ALBUM} is simply removed (see also Remark \ref{r:param2}). Therefore, in order to
				guarantee sufficient descent of the Lyapunov $\EE_{\beta}$, all we need is that
				\eqref{descond} holds true, that is (with $\tau = 0$), it reduces to
    				\begin{equation} \label{rholin}
					\rho > {\bar \rho} := \frac{2\left(d_{1} + d_{2}\right)}{a},
				\end{equation}
		 		where $d_{1}$ and $d_{2}$ are given in \eqref{dlin}. Therefore, in the special linear case,
		 		this allows for determining explicitly the threshold value ${\bar \rho}$, for a chosen
		 		Lagrangian algorithmic map $ {\cal A_\rho}$ which provides the constants $a$ and $b$ and to 
		 		obtain the corresponding convergence results via a straightforward application of Theorems 
		 		\ref{T:SubConv} and \ref{T:GlobConv}.
			\item[(ii)] Interestingly, this also provides a positive answer to a question posed in
				\cite[Remark 4(3) p. 2451]{LP2015}, where the authors pointed out that it would be
				interesting to see if global convergence of a proximal ADM could be derived; see also 
				Section \ref{sec:variants} for more results.
		\end{itemize}
	\end{remark}

\section{Proof of the main convergence results.} \label{Sec:proofs}
	Equipped with the results we have established, we can now apply our methodology to prove the main
	convergence results of {\bf ALBUM} announced in Section \ref{mainconv}.
	
\subsection{Subgradient bound for the Lyapunov function $\EE_{\beta}$.}
	As mentioned previously, we work with the function $\EE_{\beta}$ to overcome the descent obstacle and to 
	detect hidden descent mechanisms. Now the third condition {\bf C3} of our methodology comes into a play. 
	We derive below an upper bound on a subgradient of the Lyapunov function $\EE_{\beta}$.
    	\begin{lemma} \label{L:SubgradientBound}
		Let $\Seq{z}{k}$ be a bounded Lagrangian sequence. Then, for each $k \in \nn$, there exist positive
		constants $\s_{1}$ and $\s_{2}$ together with $q^{k + 1} \in \partial \EE_{\beta_{k}}\left(x^{k + 1}
		, u^{k + 1} , y^{k + 1} , x^{k}\right)$, such that for all $k \geq \kinfo$
		\begin{equation} \label{L:SubgradientBound:00}
			\norm{q^{k + 1}} \leq \s_{1}\norm{x^{k + 1} - x^{k}} + \s_{2}\norm{x^{k} - x^{k - 1}}.
		\end{equation}
    	\end{lemma}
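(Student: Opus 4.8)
The plan is to compute a subgradient of $\EE_{\beta_k}$ at the point $(x^{k+1}, u^{k+1}, y^{k+1}, x^k)$ block by block, using the definition $\EE_{\beta}(x,u,y,w) = \Laug_{\rho}(x,u,y) + \beta\norm{x-w}^2$ together with the structure of $\Laug_{\rho}$ in \eqref{D:AugLAc}. The four blocks of a generic element of $\partial\EE_{\beta_k}$ at $(x^{k+1},u^{k+1},y^{k+1},x^k)$ are:
\begin{align*}
q_x^{k+1} &= \nabla_x \Laug_{\rho_k}(x^{k+1}, u^{k+1}, y^{k+1}) + 2\beta_k(x^{k+1} - x^k), \\
q_u^{k+1} &\in \partial_u \Laug_{\rho_k}(x^{k+1}, u^{k+1}, y^{k+1}), \\
q_y^{k+1} &= \nabla_y \Laug_{\rho_k}(x^{k+1}, u^{k+1}, y^{k+1}) = F(x^{k+1}) - u^{k+1}, \\
q_w^{k+1} &= 2\beta_k(x^k - x^{k+1}).
\end{align*}
First I would bound $q_w^{k+1}$: since $\beta_k \leq \beta_0$ by \eqref{parcb}, we get $\norm{q_w^{k+1}} \leq 2\beta_0\norm{x^{k+1}-x^k}$, which is already of the desired form. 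The same bound handles the $2\beta_k(x^{k+1}-x^k)$ term appearing in $q_x^{k+1}$.

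Next I would bound $q_y^{k+1} = F(x^{k+1}) - u^{k+1}$. By the multiplier update \eqref{GenericAdap:MultiStep}, this equals $\rho_k^{-1}(y^{k+1} - y^k)$, and since $\rho_k \geq \rho_0$ by \eqref{parcb}, we have $\norm{q_y^{k+1}} \leq \rho_0^{-1}\norm{y^{k+1}-y^k}$; invoking Lemma \ref{L:DualSequence} then gives a bound of the form $\norm{q_y^{k+1}} \leq \tilde\s_1 \norm{x^{k+1}-x^k} + \tilde\s_2\norm{x^k - x^{k-1}}$ for $k \geq \kinfo$. For $q_x^{k+1}$, I would use the computation already carried out in the proof of Lemma \ref{L:DualSequence}: with the multiplier update, $\nabla_x \Laug_{\rho_k}(x^{k+1},u^{k+1},y^k) = \nabla f_0(x^{k+1}) + \nabla F(x^{k+1})^T y^{k+1}$, which is exactly $\nabla_x\Laug_{\rho_k}(x^{k+1},u^{k+1},y^{k+1})$ since the $x$-gradient of $\Laug_\rho$ depends on $y$ only through the combination $y + \rho(F(x)-u)$. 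Hence by condition {\bf C2} (inequality \eqref{lagb}), $\norm{\nabla_x \Laug_{\rho_k}(x^{k+1},u^{k+1},y^{k+1})} \leq b\norm{x^{k+1}-x^k}$, and combining with the $q_w$-type term yields a bound on $\norm{q_x^{k+1}}$ of the required form. Finally, $q_u^{k+1}$: here I would choose $q_u^{k+1}$ to be $v^{k+1} + (y^k - y^{k+1})$ where $v^{k+1}$ is the element furnished by condition {\bf C3}, because $\partial_u\Laug_{\rho_k}(x^{k+1},u^{k+1},y^{k+1}) = \partial_u\Laug_{\rho_k}(x^{k+1},u^{k+1},y^k) + (y^k - y^{k+1})$ (the only $y$-dependence in the $u$-part of $\Laug_\rho$ being the linear term $-\act{y,u}$); then {\bf C3} gives $\norm{v^{k+1}} \leq c\norm{x^{k+1}-x^k}$ and Lemma \ref{L:DualSequence} controls $\norm{y^{k+1}-y^k}$, again producing the desired form.

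Assembling the four blocks, $\norm{q^{k+1}} \leq \norm{q_x^{k+1}} + \norm{q_u^{k+1}} + \norm{q_y^{k+1}} + \norm{q_w^{k+1}}$, and collecting the coefficients of $\norm{x^{k+1}-x^k}$ and $\norm{x^k-x^{k-1}}$ gives the constants $\s_1$ and $\s_2$. I expect the only delicate point to be bookkeeping the $y$-dependence of the various partial (sub)gradients of $\Laug_\rho$ correctly — in particular recognizing that $\nabla_x\Laug_\rho$ evaluated at $y^{k+1}$ coincides with the quantity bounded by {\bf C2}, and that the $u$-subdifferential at $y^{k+1}$ differs from that at $y^k$ by the harmless additive constant $y^k - y^{k+1}$; the boundedness of $\Seq{z}{k}$ is used only to guarantee (via Lemma \ref{L:DualSequence}, whose hypothesis is a bounded multiplier sequence) that $\norm{y^{k+1}-y^k}$ is controlled by consecutive $x$-differences, and there is no genuine analytic obstacle beyond careful use of the chain rule and \eqref{parcb}.
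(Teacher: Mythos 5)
Your block decomposition of $\partial\EE_{\beta_k}$ and your treatment of the $u$-, $y$- and $w$-blocks match the paper's proof, but your handling of the $x$-block contains a genuine error, and it sits exactly at the spot you single out as the ``only delicate point.'' You assert that $\nabla_x\Laug_{\rho_k}(x^{k+1},u^{k+1},y^{k+1})$ coincides with $\nabla_x\Laug_{\rho_k}(x^{k+1},u^{k+1},y^{k})$, i.e.\ with the quantity bounded by condition {\bf C2}. This is false. Since
\begin{equation*}
\nabla_x\Laug_{\rho}\left(x,u,y\right)=\nabla f_0\left(x\right)+\nabla F\left(x\right)^T\left(y+\rho\left(F\left(x\right)-u\right)\right),
\end{equation*}
replacing $y^k$ by $y^{k+1}$ while keeping $(x^{k+1},u^{k+1},\rho_k)$ fixed shifts the argument of $\nabla F(x^{k+1})^T$ by $y^{k+1}-y^k$: the multiplier update gives $\nabla_x\Laug_{\rho_k}(x^{k+1},u^{k+1},y^{k})=\nabla f_0(x^{k+1})+\nabla F(x^{k+1})^Ty^{k+1}$, whereas $\nabla_x\Laug_{\rho_k}(x^{k+1},u^{k+1},y^{k+1})=\nabla f_0(x^{k+1})+\nabla F(x^{k+1})^T(2y^{k+1}-y^k)$. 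The correct identity, which the paper uses in \eqref{L:SubgradientBound:1}, is
\begin{equation*}
\nabla_x\Laug_{\rho_k}\left(x^{k+1},u^{k+1},y^{k+1}\right)=\nabla_x\Laug_{\rho_k}\left(x^{k+1},u^{k+1},y^{k}\right)+\nabla F\left(x^{k+1}\right)^T\left(y^{k+1}-y^k\right),
\end{equation*}
so your $q_x^{k+1}$ is missing the term $\nabla F(x^{k+1})^T(y^{k+1}-y^k)$. (Contrast this with the $u$-block, which you do handle correctly: there the $y$-dependence is the purely additive linear term $-\act{y,u}$, so the shift really is just the constant $y^k-y^{k+1}$.)

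The gap is repairable and does not change the form of the final estimate: the missing term is bounded by $B\norm{y^{k+1}-y^k}$, where $B:=\sup_{k\ge\kinfo}\norm{\nabla F(x^k)}<\infty$ by the boundedness of $\Seq{x}{k}$ and the continuity of $\nabla F$, and then \eqref{L:DualSequence:5} converts $\norm{y^{k+1}-y^k}$ into consecutive $x$-differences exactly as in your treatment of the $y$-block. This also corrects your closing remark: the boundedness of the sequence is used not only through Lemma \ref{L:DualSequence}, but also to obtain the uniform bound $B$ on $\norm{\nabla F(x^k)}$ needed for precisely the term you dropped.
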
	
    	\begin{proof}
    		Consider the quadruplet $q^{k + 1} = \left(q_{1}^{k + 1} , q_{2}^{k + 1} , q_{3}^{k + 1} , q_{4}^{k
    		+ 1}\right) \in \partial \EE_{\beta_{k}}\left(x^{k + 1} , u^{k + 1} , y^{k + 1} , x^{k}\right)$.
    		Using the definition of $\EE_{\beta}$ (see \eqref{D:Lyap}), subdifferential calculus rules, and
    		recalling the multiplier update rule \eqref{GenericAdap:MultiStep}, a direct computation shows that:
	    	\begin{align}
    			q_{1}^{k + 1} & = \nabla_{x} \Laug_{\rho_{k}}\left(x^{k + 1} , u^{k + 1} , y^{k + 1}\right) +
    			2\beta_{k}\left(x^{k + 1} - x^{k}\right) \nonumber \\
    			& = \nabla_{x} \Laug_{\rho_{k}}\left(x^{k + 1} , u^{k + 1} , y^{k}\right) + \nabla F\left(x^{k +
    			1}\right)^{T}\left(y^{k + 1} - y^{k}\right) + 2\beta_{k}\left(x^{k + 1} - x^{k}\right),
    			\label{L:SubgradientBound:1} \\
    			q_{2}^{k + 1} & \in \partial_{u} \Laug_{\rho_{k}}\left(x^{k + 1} , u^{k + 1} , y^{k + 1}\right)
    			= \partial_{u} \Laug_{\rho_{k}}\left(x^{k + 1} , u^{k + 1} , y^{k}\right) - \left(y^{k + 1} -
    			y^{k}\right), \label{L:SubgradientBound:2} \\
    			q_{3}^{k + 1} & = \nabla_{y} \Laug_{\rho_{k}}\left(x^{k + 1} , u^{k + 1} , y^{k + 1}\right) =
    			F\left(x^{k + 1}\right) - u^{k + 1} = \rho_{k}^{-1}\left(y^{k + 1} - y^{k}\right),
    			\label{L:SubgradientBound:3} \\
    			q_{4}^{k + 1} & = 2\beta_{k}\left(x^{k} - x^{k + 1}\right). \label{L:SubgradientBound:4}
    		\end{align}   		
       	Since $\Seq{x}{k}$ is assumed bounded and $\nabla F$ is continuous (see Assumption
       	\ref{AssumptionB}(ii)) it follows that there exists $B > 0$ such that
     	\begin{equation} \label{L:SubgradientBound:0}
     		\sup_{k \geq \kinfo} \norm{\nabla F\left(x^{k}\right)} \leq B.
     	\end{equation}
     	Moreover, recall that from \eqref{parcb}, we have $\rho_{k} \geq \rho_{0}$ and $\beta_{k} \leq
     	\beta_{0}$ for all $k \in \nn$. Therefore, using condition {\bf C2} and the expressions for
     	$q_{j}^{k + 1}$, $j = 1 , 2 , 3 , 4$ derived above, we get the following estimates:
     	\begin{align*}
     		\norm {q_{1}^{k + 1}} & \leq \norm {\nabla_{x} \Laug_{\rho_{k}}\left(x^{k + 1} , u^{k + 1} ,
     		y^{k}\right)} + B\norm{y^{k + 1} - y^{k}} + 2\beta_{0}\norm {x^{k + 1} - x^{k}} \\
     		& \leq b\norm{x^{k + 1} - x^{k}} +  B\norm{y^{k + 1} - y^{k}} + 2\beta_{0}\norm {x^{k + 1} -
     		x^{k}} \\
     		& = B\norm{y^{k + 1} - y^{k}} + \left(b + 2\beta_{0}\right)\norm {x^{k + 1} - x^{k}}.
   	  	\end{align*}
     	Likewise, thanks to condition {\bf C3} we have with $v^{k + 1} \in \partial_{u} \Laug_{\rho_{k}}
     	\left(x^{k + 1} , u^{k + 1} , y^{k}\right)$ that $\norm{v^{k + 1}} \leq d\norm{x^{k + 1} - x^{k}}$,
     	and hence by defining $q_{2}^{k + 1} = v^{k + 1} - \left(y^{k + 1} - y^{k}\right)$, it immediately
     	follows that $q_{2}^{k + 1} \in \partial_{u} \Laug_{\rho_{k}}\left(x^{k + 1} , u^{k + 1} , y^{k + 1}
     	\right)$, and from \eqref{L:SubgradientBound:2}
		\begin{equation*}
			\norm{q_{2}^{k + 1}} \leq \norm{v^{k + 1}} + \norm{y^{k + 1} - y^{k}} \leq d\norm{x^{k + 1} -
			x^{k}} + \norm{y^{k + 1} - y^{k}}.
		\end{equation*}	
    		Finally, from \eqref{L:SubgradientBound:3} and \eqref{L:SubgradientBound:4} we immediately obtain
    		(recall \eqref{parcb})
    		\begin{equation*}
    			\norm{q_{3}^{k + 1}} \leq \frac{1}{\rho_{0}}\norm{y^{k + 1} - y^{k}} \quad \text{and} \quad
    			\norm{q_{4}^{k + 1}} \leq 2\beta_{0}\norm{x^{k + 1} - x^{k}}.
    		\end{equation*}
     	Therefore, summing these inequalities, we obtain for all $k \geq \ksta$
     	\begin{equation*}
    			\norm{q^{k + 1}} \leq \sum_{j = 1}^{4} \norm{q_{j}^{k + 1}} \leq \left(B + 1 + \frac{1}    	
    			{\rho_{0}}\right)\norm{y^{k + 1} - y^{k}} + \left(4\beta_{0} + b + d\right)\norm{x^{k + 1} -
    			x^{k}}.
    		\end{equation*}
    		Using the proof of Lemma \ref{L:DualSequence}, for all $k \geq \ksta$, we know from
    		\eqref{L:DualSequence:5} that
    		\begin{equation}
	        	\gamma\norm{y^{k + 1} - y^{k}} \leq \left(L(f_{0}) + L(F)\Lambda + b\right)\norm{x^{k + 1} -
	        	x^{k}} + b\norm{x^{k} - x^{k - 1}}.
        \end{equation}
        Combining this with the above inequality yields the desired estimation \eqref{L:SubgradientBound:00}
        by choosing
    		\begin{equation*}
    			\s_{1} = \frac{1}{\gamma}\left(B + 1 + \frac{1}{\rho_{0}}\right)\left(L(f_{0}) + L(F)\Lambda +
    			b\right) + 4\beta_{0} + b + d \quad \text{and} \quad \s_{2} = \frac{b}{\gamma}\left(B + 1 +
    			\frac{1}{\rho_{0}}\right).
    		\end{equation*}    		
    		This completes the proof.
    \end{proof}
	Equipped with Lemma \ref{L:SufficentDecrease} we immediately obtain the following result.
	\begin{proposition} \label{P:WeakConv}
		Let $\Seq{z}{k}$ be a Lagrangian sequence. Assume that the multiplier sequence $\Seq{y}{k}$ is
		bounded. Then
		\begin{equation*}
			\sum_{k = 1}^{\infty} \norm{x^{k + 1} - x^{k}}^{2} < \infty \quad \text{and} \quad \sum_{k = 1}
			^{\infty} \norm{y^{k + 1} - y^{k}}^{2} < \infty.
		\end{equation*}
	\end{proposition}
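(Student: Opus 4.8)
The plan is to telescope the surrogate descent inequality of Lemma \ref{L:SufficentDecrease} and turn it into a finite sum by bounding the Lyapunov function from below along the iterates; the dual summability will then follow almost for free from Lemma \ref{L:DualSequence}. Since $\Seq{y}{k}$ is bounded, Lemma \ref{L:SufficentDecrease} applies and produces an index $\ksta \geq \kinfo$ with $\rho_{k} \equiv \rho_{\ksta}$, $x^{k} \in \Zone$ for all $k \geq \ksta$, and
\begin{equation*}
	\tau\norm{x^{k+1}-x^{k}}^{2} \leq \EE_{\beta_{\ksta}}\left(x^{k},u^{k},y^{k},x^{k-1}\right) - \EE_{\beta_{\ksta}}\left(x^{k+1},u^{k+1},y^{k+1},x^{k}\right), \qquad \forall \, k \geq \ksta.
\end{equation*}
Summing this over $k = \ksta, \ldots, N$ and telescoping, the task reduces to showing that $\EE_{\beta_{\ksta}}(x^{N+1},u^{N+1},y^{N+1},x^{N})$ stays bounded below, uniformly in $N$.

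For that I would argue as follows. From \eqref{D:Lyap}, $\EE_{\beta_{\ksta}}(x,u,y,w) \geq \Laug_{\rho_{\ksta}}(x,u,y) \geq g(y)$, where $g(y) := \inf_{x,u}\Laug_{\rho_{\ksta}}(x,u,y)$. By \eqref{D:AugLAc}, for each fixed $(x,u)$ the map $y \mapsto \Laug_{\rho_{\ksta}}(x,u,y)$ is affine, so $g$ is concave; by the well-posedness assumption \eqref{WellPosed} it is finite-valued on $\rr^{m}$, and a finite-valued concave function on $\rr^{m}$ is continuous, hence bounded below on compact sets. Since $\Seq{y}{k}$ is bounded, it is contained in a compact ball $K$, so $g(y^{k}) \geq \mu := \min_{y\in K} g(y) > -\infty$ for all $k$, and therefore $\EE_{\beta_{\ksta}}(x^{N+1},u^{N+1},y^{N+1},x^{N}) \geq \mu$ for every $N$. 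Combining this with the telescoped inequality gives $\sum_{k=\ksta}^{\infty}\norm{x^{k+1}-x^{k}}^{2} < \infty$, and adding the finitely many (finite) terms with $1 \leq k < \ksta$ yields $\sum_{k=1}^{\infty}\norm{x^{k+1}-x^{k}}^{2} < \infty$.

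The dual summability then follows immediately: since $\Seq{y}{k}$ is bounded, Lemma \ref{L:DualSequence} gives \eqref{L:DualSequence:00} for all $k \geq \kinfo$, so summing over $k$ and using the first part,
\begin{equation*}
	\sum_{k=\kinfo}^{\infty}\norm{y^{k+1}-y^{k}}^{2} \leq d_{1}\sum_{k=\kinfo}^{\infty}\norm{x^{k+1}-x^{k}}^{2} + d_{2}\sum_{k=\kinfo}^{\infty}\norm{x^{k}-x^{k-1}}^{2} < \infty,
\end{equation*}
and incorporating the remaining finitely many terms proves $\sum_{k=1}^{\infty}\norm{y^{k+1}-y^{k}}^{2} < \infty$. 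I expect the one genuinely delicate step to be the lower bound on $\EE_{\beta_{\ksta}}$ along the sequence: assumption \eqref{WellPosed} controls $\inf_{x,u}\Laug_{\rho}$ only for a \emph{fixed} multiplier, whereas here the dual iterate $y^{k}$ moves, so one must upgrade \eqref{WellPosed} to a bound uniform over the compact range of $\Seq{y}{k}$ — which is exactly what concavity in $y$ of the augmented Lagrangian delivers.
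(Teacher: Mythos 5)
Your proof is correct and follows essentially the same route as the paper's: telescope the surrogate descent inequality from Lemma \ref{L:SufficentDecrease} to get summability of $\norm{x^{k+1}-x^{k}}^{2}$, then feed it into Lemma \ref{L:DualSequence} for the dual increments. In fact your handling of the lower bound on $\EE_{\beta_{\ksta}}$ along the iterates --- upgrading \eqref{WellPosed} to a bound uniform over the compact range of $\Seq{y}{k}$ via concavity and finiteness of $y \mapsto \inf_{x,u}\Laug_{\rho_{\ksta}}(x,u,y)$ --- is more careful than the paper's own justification, which simply cites \eqref{WellPosed} even though the multiplier $y^{p+1}$ in the tail term varies with $p$.
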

	\begin{proof}
		Invoking Lemma \ref{L:SufficentDecrease} which holds true under the stated assumptions, we have that
		\begin{equation} \label{P:WeakConv:1}
 			\tau\norm{x^{k + 1} - x^{k}}^{2} \leq \EE_{\beta_{\ksta}} \left(x^{k} , u^{k} , y^{k} , x^{k -
 			1}\right) - \EE_{\beta_{\ksta}}\left(x^{k + 1} , u^{k + 1} , y^{k + 1} , x^{k}\right),
		\end{equation}
		for all $k \geq \ksta$. Summing \eqref{P:WeakConv:1} over $k = \ksta , \ksta + 1 , \ldots , \ksta +
		p$ we obtain
		\begin{align*}
    	    		\tau\sum_{k = \ksta}^{\ksta + p} \norm{x^{k + 1} - x^{k}}^{2} & \leq \EE_{\beta_{\ksta}}
    	    		\left(x^{1} , u^{1} , y^{1} , x^{0}\right) - \EE_{\beta_{\ksta}}\left(x^{p + 1} , u^{p + 1} ,
    	    		y^{p + 1} , x^{p}\right) \\
   	 		& \leq \EE_{\beta_{\ksta}}\left(x^{1} , u^{1} , y^{1} , x^{0}\right),
		\end{align*}
		where the last inequality follows from the fact that $\inf_{(x , u)} \EE_{\beta_{\ksta}} > - \infty$
		(thanks to \eqref{WellPosed} since $\EE_{\beta_{\ksta}}\left(\cdot\right) \geq \Laug_{\rho_{\ksta}}
		\left(\cdot\right)$). Letting $p \rightarrow \infty$ yields
		\begin{equation*}
			\sum_{k = 1}^{\infty} \norm{x^{k + 1} - x^{k}}^{2} < \infty.
		\end{equation*}
		Therefore, from Lemma \ref{L:DualSequence}, it also follows that $\sum_{k = 1}^{\infty} \norm{y^{k +
		1} - y^{k}}^{2} < \infty$, as required.
	\end{proof}
	We are now ready to prove our first convergence result for the generic scheme {\bf ALBUM}.

\subsection{Proof of Theorem \ref{T:SubConv} -- subsequence convergence.}
	The sequence $\Seq{z}{k}$ is bounded and therefore there exists a subsequence $\left\{ z^{m_{k}}
	\right\}_{k \in \nn}$ which converges to ${\bar z} = \left({\bar x} , {\bar u} , {\bar y}\right)$. We
	first prove that $\left({\bar x} , {\bar u} , {\bar y} , {\bar x}\right)$ is a critical point of
	$\EE_{\beta_{\ksta}}$, that is,
	\begin{equation*}
		\left(0 , 0 , 0 , 0\right) \in \partial \EE_{\beta_{\ksta}}\left({\bar x} , {\bar u} , {\bar y} ,
		{\bar x}\right).
	\end{equation*}
	Since $h$ is lower semi-continuous we have
	that
    \begin{equation*}
		\liminf_{k \rightarrow \infty} h\left(u^{m_{k}}\right) \geq h\left({\bar u}\right),
   	\end{equation*}
  	which combined with condition {\bf C4} yields that $h\left(u^{m_{k}}\right)$ converges to $h\left({\bar
  	u}\right)$ as $k \rightarrow \infty$. Therefore, from Proposition \ref{P:WeakConv} and the continuity of
  	$f_{0}$ and $F$ (see Assumption \ref{AssumptionB}(ii) and (iii)), we obtain that
   	\begin{align*}
		\lim_{k \rightarrow \infty} \EE_{\beta_{\ksta}}\left(z^{m_{k}} , x^{m_{k} - 1}\right) & = \lim_{k
		\rightarrow \infty} \left[\Laug_{\rho_{\ksta}}\left(x^{m_{k}} , u^{m_{k}} , y^{m_{k}}\right) +
		\beta_{\ksta}\norm{x^{m_{k}} - x^{m_{k} - 1}}^{2}\right] \\
       	& = \Laug_{\rho_{\ksta}}\left({\bar x} , {\bar u} , {\bar y}\right) \\
        & = \EE_{\beta_{\ksta}}\left({\bar z} , {\bar x}\right).
	\end{align*}
   	We know from Lemma \ref{L:SubgradientBound} that there exist $\s_{1} , \s_{2} > 0$ and $q^{k + 1} \in
   	\partial \EE_{\beta_{\ksta}}\left(z^{k + 1} , x^{k}\right)$ for which
	\begin{equation*}
		\norm{q^{k + 1}} \leq \s_{1}\norm{x^{k + 1} - x^{k}} + \s_{2}\norm{x^{k} - x^{k - 1}}.
	\end{equation*}
	On the other hand, from Proposition \ref{P:WeakConv} it follows that
    	\begin{equation*}
		\lim_{k \rightarrow \infty} \norm{x^{k + 1} - x^{k}} = 0.
	\end{equation*}
    	Thus $q^{k + 1} \rightarrow 0$ as $k \rightarrow \infty$. Using the closedness property of the graph
    	of the subdifferential $\partial \EE_{\beta}$, we obtain that $\left(0 , 0 , 0 , 0\right) \in \partial
    	\EE_{\beta_{\ksta}}\left({\bar x} , {\bar u} , {\bar y} , {\bar x}\right)$. This shows that $\left({\bar
    	x} , {\bar u} , {\bar y} , {\bar x}\right)$ is a critical point of $\EE_{\beta_{\ksta}}$. Proposition
    	\ref{P:Crit} now implies that ${\bar x}$ is a critical point of the objective function $f$ of model
    	(CM), and the proof is completed.
\medskip

	Next, in order to prove the second main global convergence result of our algorithm {\bf ALBUM}, we need
	to introduce adequate and necessary material on the nonsmooth KL property \cite{BDL2006}.
\medskip

    Let $\eta \in \left(0 , +\infty\right]$. We denote by $\Phi_{\eta}$ the class of all concave and
    continuous functions $\varphi : \left[0 , \eta\right) \rightarrow \rr_{+}$ which satisfy the following
    conditions
    \begin{itemize}
        \item[$\rm{(i)}$] $\varphi\left(0\right) = 0$;
        \item[$\rm{(ii)}$] $\varphi$ is $C^{1}$ on $\left(0 , \eta\right)$ and continuous at $0$;
        \item[$\rm{(iii)}$] for all $s \in \left(0 , \eta\right)$: $\varphi'\left(s\right) > 0$.
    \end{itemize}
    The next result plays a crucial role, see \cite[Lemma 6]{BST2014}.
 	\begin{lemma}[Uniformized KL property] \label{L:KLProperty}
        Let $\Omega$ be a compact set and let $\s : \rr^{d} \rightarrow \left(-\infty , \infty\right]$ be a
        proper and lower semicontinuous function. Assume that $\s$ is constant on $\Omega$ and satisfies the
        KL property at each point of $\Omega$. Then, there exist $\varepsilon > 0$, $\eta > 0$ and $\varphi
        \in \Phi_{\eta}$ such that for all $\overline{u}$ in $\Omega$ and all $u$ in the following
        intersection
        \begin{equation} \label{L:KLProperty:1}
            \left\{ u \in \rr^{d} : \, \dist\left(u , \Omega\right) < \varepsilon \right\} \cap \left[
            \s\left(\overline{u}\right) < \s\left(u\right) < \s\left(\overline{u}\right) + \eta\right],
        \end{equation}
        one has,
        \begin{equation} \label{L:KLProperty:2}
            \varphi'\left(\s\left(u\right) - \s\left(\overline{u}\right)\right)\dist\left(0 , \partial
            \s\left(u\right)\right) \geq 1.
        \end{equation}
    \end{lemma}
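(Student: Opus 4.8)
The statement is the usual ``uniformization'' of the Kurdyka--{\L}ojasiewicz inequality over a compact set, and I would establish it by a straightforward compactness argument. First I would normalize the situation: since $\s$ is constant on $\Omega$, call $c$ its common value there, and observe that replacing $\s$ by $\s - c$ affects neither $\partial \s$ nor the quantity $\varphi'\!\left(\s(u) - \s(\overline u)\right)$ appearing in \eqref{L:KLProperty:2} (which only depends on the difference $\s(u)-\s(\overline u)$). Hence it is harmless to assume $\s \equiv 0$ on $\Omega$, so that the admissible set \eqref{L:KLProperty:1} becomes $\left\{ u : \dist(u,\Omega) < \varepsilon \right\} \cap \left[ 0 < \s(u) < \eta \right]$. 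Next I would invoke the pointwise KL property: for every $\overline u \in \Omega$ there are $\varepsilon_{\overline u} > 0$, $\eta_{\overline u} > 0$ and $\varphi_{\overline u} \in \Phi_{\eta_{\overline u}}$ such that $\varphi_{\overline u}'(\s(u))\,\dist(0 , \partial \s(u)) \geq 1$ whenever $\norm{u - \overline u} < \varepsilon_{\overline u}$ and $0 < \s(u) < \eta_{\overline u}$.

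The open balls $B(\overline u , \varepsilon_{\overline u}/2)$, $\overline u \in \Omega$, cover the compact set $\Omega$, so finitely many suffice; let $u_{1} , \ldots , u_{p} \in \Omega$ be their centers, and write $\varepsilon_{i} := \varepsilon_{u_i}$, $\eta_{i} := \eta_{u_i}$, $\varphi_{i} := \varphi_{u_i}$. I would then set
\[
	\varepsilon := \tfrac{1}{2}\min_{1 \leq i \leq p}\varepsilon_{i}, \qquad \eta := \min_{1 \leq i \leq p}\eta_{i}, \qquad \varphi := \sum_{i = 1}^{p} \varphi_{i}.
\]
Each $\varphi_{i}$, restricted to $[0 , \eta)$ (legitimate since $\eta \leq \eta_{i}$), lies in $\Phi_{\eta}$; and $\Phi_{\eta}$ is closed under finite sums, since concavity, the normalization $\varphi(0) = 0$, the $C^{1}$ property on $(0,\eta)$ together with continuity at $0$, and the strict positivity of the derivative are all preserved under addition. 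Hence $\varphi \in \Phi_{\eta}$.

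To conclude, fix $u$ with $\dist(u , \Omega) < \varepsilon$ and $0 < \s(u) < \eta$. Choose $\overline u \in \Omega$ with $\norm{u - \overline u} < \varepsilon$ and an index $i$ with $\overline u \in B(u_{i} , \varepsilon_{i}/2)$; then $\norm{u - u_{i}} \leq \norm{u - \overline u} + \norm{\overline u - u_{i}} < \varepsilon + \varepsilon_{i}/2 \leq \varepsilon_{i}$, and $0 < \s(u) < \eta \leq \eta_{i}$, so $u$ lies in the region on which the $i$-th pointwise KL inequality holds; thus $\varphi_{i}'(\s(u))\,\dist(0 , \partial \s(u)) \geq 1$. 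Since every $\varphi_{j}'$ is positive on $(0,\eta)$, we have $\varphi'(\s(u)) = \sum_{j} \varphi_{j}'(\s(u)) \geq \varphi_{i}'(\s(u))$, whence $\varphi'(\s(u))\,\dist(0 , \partial \s(u)) \geq 1$, which is exactly \eqref{L:KLProperty:2}.

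The proof is essentially bookkeeping, and the two points deserving care are: (a) the shrink-by-one-half trick in the cover, which is precisely what converts the hypothesis $\dist(u,\Omega) < \varepsilon$ into membership of $u$ in one of the \emph{original} balls $B(u_{i} , \varepsilon_{i})$ rather than merely a half-ball; and (b) verifying that $\varphi := \sum_{i} \varphi_{i}$ genuinely belongs to $\Phi_{\eta}$, in particular that it stays concave with a strictly positive, continuous derivative on $(0,\eta)$. If one prefers to avoid summing, an equivalent route is to bring all the $\varphi_{i}$ to the common domain $[0,\eta)$ and note that the conclusion only requires a single $\varphi \in \Phi_{\eta}$ with $\varphi' \geq \varphi_{i}'$ pointwise for each $i$.
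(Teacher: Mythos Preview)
Your proof is correct and is precisely the standard compactness argument for uniformizing the KL inequality. Note, however, that the paper does not actually prove this lemma: it is quoted verbatim from \cite[Lemma~6]{BST2014} (``The next result plays a crucial role, see \cite[Lemma~6]{BST2014}''), so there is no in-paper proof to compare against. Your argument coincides with the one given in that reference: normalize $\s$ to vanish on $\Omega$, extract a finite subcover by half-radius balls, set $\varepsilon$, $\eta$ to be the minimum of the local data, take $\varphi = \sum_i \varphi_i$, and use the triangle inequality to land any admissible $u$ inside one of the original KL neighborhoods. The two care points you flag---the half-radius trick and the closure of $\Phi_\eta$ under finite sums---are exactly the ones that matter, and both are handled correctly.
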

 	Equipped with these results we proceed with the proof of the second main theorem, \ie convergence of the
 	whole sequence $\Seq{z}{k}$ to a critical point of problem (CM) with semi-algebraic data $f_{0}$, $h$
 	and $F$. Note that the technique used below is patterned after the recent work \cite{BST2014}. However,
 	as explained previously, we cannot apply directly these results to {\bf ALBUM}, since the descent
 	requirements stated there clearly do not hold in our framework.
		
\subsection{Proof of Theorem \ref{T:GlobConv} -- global convergence.}
	Since $\Seq{z}{k}$ is bounded there exists a subsequence $\left\{ z^{m_{k}} \right\}_{k \in \nn}$ such
	that $z^{m_{k}} \rightarrow {\bar z}$ as $k \rightarrow \infty$. In a similar way as in Theorem
	\ref{T:SubConv} we get that
   	\begin{equation} \label{T:FiniteLength:1}
		\lim_{k \rightarrow \infty} \EE_{\beta_{\ksta}}\left(z^{k} , x^{k - 1}\right) = \EE_{\beta_{\ksta}}
		\left({\bar z} , {\bar x}\right).
	\end{equation}
	If there exists an integer $\bar{k} \geq \ksta$ for which $\EE_{\beta_{\ksta}}\left(z^{\bar{k}} ,
	x^{\bar{k} - 1}\right) = \EE_{\beta_{\ksta}}\left({\bar z} , {\bar x}\right)$ then the decreasing
	property obtained in Lemma \ref{L:SufficentDecrease} would imply that $z^{\bar{k} + 1} = z^{\bar{k}}$. A
	trivial induction show then that the sequence $\Seq{z}{k}$ is stationary and the announced result is
	obvious.
\medskip

	Since $\left\{ \EE_{\beta_{\ksta}}\left(z^{k} , x^{k - 1}\right) \right\}_{k \in \nn}$ is a
	nonincreasing sequence, it is clear from \eqref{T:FiniteLength:1} that
	\begin{equation*}
		\EE_{\beta_{\ksta}}\left({\bar z} , {\bar x}\right) < \EE_{\beta_{\ksta}}\left(z^{k} , x^{k - 1}
		\right)\; \text{for all} \; k \geq \ksta.
	\end{equation*}
	Again from \eqref{T:FiniteLength:1}, for any $\eta > 0$ there exists $k_{0} \geq \ksta$ such that
   	\begin{equation*}
		\EE_{\beta_{\ksta}}\left(z^{k} , x^{k - 1}\right) < \EE_{\beta_{\ksta}}\left({\bar z} , {\bar x}
		\right) + \eta, \, \forall \, k > k_{0}.
	\end{equation*}
	From Theorem \ref{T:SubConv} we know that $\lim_{k \rightarrow \infty} \dist\left(z^{k} , \omega
	\left(z^{0}\right)\right) = 0$. This means that for any $\varepsilon > 0$ there exists a positive
	integer $k_{1} \geq \ksta$ such that $\dist\left(z^{k} , \omega\left(z^{0}\right)\right) < \varepsilon$
	for all $k > k_{1}$. Summing up all these facts, we get that $z^{k}$ belongs to the intersection in
	\eqref{L:KLProperty:1} for all $k > l := \max\left\{ k_{0} , k_{1} \right\} \geq \ksta$.
\medskip

	We denote by $\omega\left(z^{0}\right)$ the set of all limit points. By Theorem \ref{T:SubConv}, $\omega
	\left(z^{0}\right)$ is nonempty and  compact (since by definition, it can viewed as an intersection of
	compact sets). Now, we show that  $\EE_{\beta_{\ksta}}$ is finite and constant on $\omega\left(z^{0}
	\right)$. Indeed, by our standing assumption (see \eqref{WellPosed}) we know that $\Laug_{\rho}
	\left(z^{k} \right) > -\infty$ for all $k \in \nn$, therefore from the definitions of $\Laug_{\rho}$ and
	$\EE_{\beta}$ (see \eqref{D:AugLAc} and \eqref{D:Lyap}, respectively) we have that $\left\{
	\EE_{\beta_{k}}\left(z^{k} , x^{k - 1}\right) \right\}_{k \in \nn}$ is bounded from below. Lemma
	\ref{L:SufficentDecrease} now guarantees that $\left\{ \EE_{\beta_{\ksta}}\left(z^{k} , x^{k - 1}\right)
	\right\}_{k \in \nn}$ converges to a finite limit, say $l$. From \eqref{T:FiniteLength:1} it follows
	that $l = \EE_{\beta_{\ksta}}\left({\bar z} , {\bar x}\right)$, which proves that $\EE_{\beta_{\ksta}}$
	is finite and constant on $\omega\left(z^{0}\right)$.
\medskip

	Thus, since $\EE_{\beta_{\ksta}}$ is a KL function, we can  apply the Uniformization Lemma
	\ref{L:KLProperty} with $\Omega = \omega\left(z^{0}\right)$. Therefore, for any $k > l$, we have
   	\begin{equation} \label{T:FiniteLength:2}
		\varphi'\left(\EE_{\beta_{\ksta}}\left(z^{k} , x^{k - 1}\right) - \EE_{\beta_{\ksta}}\left({\bar z}
		, {\bar x}\right)\right) \cdot \dist\left(0 , \partial \EE_{\beta_{\ksta}}\left(z^{k} , x^{k - 1}
		\right)\right) \geq 1.
	\end{equation}
   	This makes sense since we know that $\EE_{\beta_{\ksta}}\left(z^{k} , x^{k - 1}\right) >
   	\EE_{\beta_{\ksta}}\left({\bar z} , {\bar x}\right)$ for any $k > l \geq \ksta$. Using Lemma
	\ref{L:SubgradientBound} (recalling that $\ksta \geq \kinfo$), we get that
	\begin{align}
		\varphi'\left(\EE_{\beta_{\ksta}}\left(z^{k} , x^{k - 1}\right) - \EE_{\beta_{\ksta}}\left({\bar z}
		, {\bar x}\right)\right) & \geq \frac{1}{\dist\left(0 , \partial \EE_{\beta_{\ksta}}\left(z^{k} ,
		x^{k - 1}\right)\right)} \nonumber \\
      	& \geq \left(\s\norm{x^{k} - x^{k - 1}} + \s\norm{x^{k - 1} - x^{k - 2}}\right)^{-1},
      	\label{T:FiniteLength:3}
	\end{align}
	where $\s = \max \left\{ \s_{1} , \s_{2} \right\}$ while $\s_{1}$ and $\s_{2}$ given in Lemma
	\ref{L:SubgradientBound}. On the other hand, from the concavity of $\varphi$ we get that
	\begin{align}
		\varphi\left(\EE_{\beta_{\ksta}}\left(z^{k} , x^{k - 1}\right) - \EE_{\beta_{\ksta}}\left({\bar z} ,
		{\bar x}\right)\right) - \varphi\left(\EE_{\beta_{\ksta}}\left(z^{k + 1} , x^{k}\right) -
		\EE_{\beta_{\ksta}}\left({\bar z} , {\bar x}\right)\right) \geq & \nonumber \\
		& \hspace{-5.5in} \varphi'\left(\EE_{\beta_{\ksta}}\left(z^{k} , x^{k - 1}\right) -
		\EE_{\beta_{\ksta}}\left({\bar z} , {\bar x}\right)\right)\left(\EE_{\beta_{\ksta}}\left(z^{k} ,
		x^{k - 1}\right) - \EE_{\beta_{\ksta}}\left(z^{k + 1} , x^{k}\right)\right).
		\label{T:FiniteLength:4}
	\end{align}
	For convenience, we define for all $p , q \in \nn$ and ${\bar z}$ the following quantities
	\begin{equation*}
		\Delta_{p , q} : = \varphi\left(\EE_{\beta_{\ksta}}\left(z^{p} , x^{p - 1}\right) -
		\EE_{\beta_{\ksta}}\left({\bar z} , {\bar x}\right)\right) - \varphi\left(\EE_{\beta_{\ksta}}
		\left(z^{q} , x^{q - 1}\right) - \EE_{\beta_{\ksta}}\left({\bar z} , {\bar x}\right)\right).
	\end{equation*}
	Combining \eqref{T:FiniteLength:3} and \eqref{T:FiniteLength:4} and using Lemma
	\ref{L:SufficentDecrease} yields for any $k > l$ that
	\begin{equation} \label{T:FiniteLength:5}
		\Delta_{k , k + 1} \geq \frac{\tau\norm{x^{k + 1} - x^{k}}^{2}}{\psi\left(\norm{x^{k} - x^{k - 1}} +
		\norm{x^{k - 1} - x^{k - 2}}\right)},
	\end{equation}
	and hence
	\begin{equation*}
		\norm{x^{k + 1} - x^{k}}^{2} \leq \rho\Delta_{k , k + 1}\left(\norm{x^{k} - x^{k - 1}} + \norm{x^{k
		- 1} - x^{k - 2}}\right),
	\end{equation*}
	where $\rho = \s/\tau$. Using the fact that $2\sqrt{\alpha\beta} \leq \alpha + \beta$ for all $\alpha ,
	\beta \geq 0$, we infer
	\begin{equation} \label{T:FiniteLength:6}
		4\norm{x^{k + 1} - x^{k}} \leq \norm{x^{k} - x^{k - 1}} + \norm{x^{k - 1} - x^{k - 2}} + 4\rho
		\Delta_{k , k + 1}.
	\end{equation}
	Let us now prove that for any $k > l$ the following inequality holds
	\begin{equation*}
		2\sum_{i = l + 1}^{k} \norm{x^{i + 1} - x^{i}} \leq 2\norm{x^{l + 1} - x^{l}} + \norm{x^{l} - x^{l -
		1}} + \rho\Delta_{l + 1 , k + 1}.
	\end{equation*}
	Summing up \eqref{T:FiniteLength:6} for $i = l + 1 , \ldots , k$ yields
	\begin{align*}
		4\sum_{i = l + 1}^{k} \norm{x^{i + 1} - x^{i}} & \leq \sum_{i = l + 1}^{k} \norm{x^{i} - x^{i - 1}}
		+ \sum_{i = l + 1}^{k} \norm{x^{i - 1} - x^{i - 2}} + 4\rho\sum_{i = l + 1}^{k} \Delta_{i , i + 1}
		\\
		& \leq \sum_{i = l + 1}^{k} \norm{x^{i + 1} - x^{i}} + \norm{x^{l + 1} - x^{l}} + 4\rho\sum_{i = l +
		1}^{k} \Delta_{i , i + 1} \\
		& + \sum_{i = l + 1}^{k} \norm{x^{i + 1} - x^{i}} + \norm{x^{l + 1} - x^{l}} + \norm{x^{l} - x^{l -
		1}} \\
		& = 2\sum_{i = l + 1}^{k} \norm{x^{i + 1} - x^{i}} + 2\norm{x^{l + 1} - x^{l}} + \norm{x^{l} - x^{l
		- 1}} + 4\rho\Delta_{l + 1 , k + 1},
	\end{align*}
	where the last inequality follows from the fact that $\Delta_{p , q} + \Delta_{q , r} = \Delta_{p , r}$
	for all $p , q , r \in \nn$. Since $\varphi \geq 0$, we thus have for any $k > l$ that
	\begin{equation*}
		2\sum_{i = l + 1}^{k} \norm{x^{i + 1} - x^{i}} \leq 2\norm{x^{l + 1} - x^{l}} + \norm{x^{l} - x^{l -
		1}} + \gamma\varphi\left(\EE_{\beta_{\ksta}}\left(z^{l} , x^{l - 1}\right) - \EE_{\beta_{\ksta}}
		\left({\bar z} , {\bar x}\right)\right).
	\end{equation*}
	Since the right hand-side of the inequality above does not depend on $k$ at all, it is easily shows that
	the sequence $\Seq{x}{k}$ has finite length, that is,
	\begin{equation} \label{T:FiniteLength-Item1:6}
		\sum_{k = 1}^{\infty} \norm{x^{k + 1} - x^{k}} < \infty.
	\end{equation}
	This means that it is a Cauchy sequence and hence a convergent sequence. In addition, from
	\eqref{L:DualSequence:5} we also have
	\begin{equation*}
        	\sum_{k = 1}^{\infty} \norm{y^{k + 1} - y^{k}} < \infty,
	\end{equation*}
	and thus $\Seq{y}{k}$ has also finite length and therefore a convergent sequence. Now, the multiplier
	Step 3.2 yields, for any $k \geq \ksta$, that
	\begin{equation*}
		u^{k + 1} = F\left(x^{k + 1}\right) + \frac{1}{\rho_{\ksta}}\left(y^{k} - y^{k + 1}\right).
	\end{equation*}
	Since $F$ is continuous, $\Seq{x}{k}$ a convergent sequence and thanks to Proposition \ref{P:WeakConv}
	it follows that $\Seq{u}{k}$ is also a convergent sequence. From Theorem \ref{T:SubConv} it is clear
	that $\left\{\left(z^{k} , x^{k - 1}\right) \right\}_{k \in \nn}$ converges to a critical point
	$\left({\bar x} , {\bar u} , {\bar y} , {\bar x}\right)$ of $\EE_{\beta_{\ksta}}$. We finally conclude
	from Proposition \ref{P:Crit} that ${\bar x}$ is a critical point of $f$.

\section{Applications: specific schemes from {\bf ALBUM}.} \label{sec:variants}
	The generic scheme {\bf ALBUM} encompasses interesting Lagrangian based methods. First recall that in
	any Lagrangian based method, the multiplier update is always given by an {\em explicit} formula (see
	\eqref{GenericAdap:MultiStep}):
	\begin{equation*}
		y^{k + 1} = y^{k} + \rho_{k}\left(F\left(x^{k + 1}\right) - u^{k + 1}\right).
  	\end{equation*}
	Thus, the main computational and algorithmic issues which emerge from {\bf ALBUM} depend on the way we
	define the Lagrangian algorithmic map $\AAA_{\rho}$ to compute the primal step. In general, any
	minimization algorithm can be used at this stage. We focus on the description of two fundamental types
	of maps $\AAA_{\rho}$, yet we note that other variants can also be conceived depending on the problem's
	data information and the structure at hand. This point will be  further developed below in Section
	\ref{other}.

\subsection{Two fundamental instances of $\AAA_{\rho}$ and the corresponding {\bf ALBUM}.}
	Given a triple $\left(x^{k} , u^{k} , y^{k}\right)$ we compute the next primal variables $x^{k + 1}$ and
	$u^{k + 1}$ in {\bf ALBUM} via the algorithmic map $\AAA_{\rho}$ given by either one of the following
	minimization schemes:
	\begin{itemize}
		\item {\bf ALBUM 1}  -- {\em Joint Minimization $\equiv$ Proximal Multipliers Method \cite{R1976}}
			\begin{equation} \label{AAA-PMM}
				\left(x^{k + 1} , u^{k + 1}\right) \in \argmin_{(x , u)} \left\{ \Laug_{\rho_{k}}\left(x , u
				, y^{k}\right) + \frac{\mu}{2}\norm{x - x^{k}}^{2} \right\}, \quad (\mu > 0).
			\end{equation}
			This simple idea consists in minimizing a proximal counterpart of the augmented Lagrangian
			$\Laug_{\rho}$, jointly with respect to both primal variables $x$ and $u$, is nothing else but
			the classical dynamic of Proximal Method of Multipliers (PMM) of Rockafellar \cite{R1976}.
		\item {\bf ALBUM 2 } -- {\em Alternating Minimization (aka Gauss-Seidel) $\equiv$ Proximal ADM
			\cite{GLT1989}}
		
			Update the variables $x$ and $u$ in an alternating fashion as follows:
			\begin{align}
				u^{k + 1} & \in \argmin_{u} \Laug_{\rho_{k}}\left(x^{k} , u , y^{k}\right),
				\label{AAA-ADPMM:Step1} \\
				x^{k + 1} & \in \argmin_{x} \left\{ \Laug_{\rho_{k}}\left(x , u^{k + 1} , y^{k}\right) +
				\frac{\mu}{2}\norm{x - x^{k}}^{2} \right\}, \quad (\mu > 0). \label{AAA-ADPMM:Step2}
			\end{align}
	\end{itemize}
	\begin{remark} \label{albums}
		\begin{itemize}
			\item[(i)] Note that in the above two schemes the proximal regularization term was added only
				for the primal variable $x$ of the augmented Lagrangian, since by the construction of
				$\Laug_{\rho}$ (see \eqref{D:AugLAc}), we note that the primal variable $u$ already admit a
				built-in proximal term.
			\item[(ii)] Also, note that the flexibility of {\bf ALBUM} provides potential for further
				studies within other strategies or variants that could be conceived and further developed in
				future work, \eg adding a proximal regularization term for $u$ around $u^{k}$ and performing
				a subgradient step for determining the next point $u^{k + 1}$; or dropping one of the
				proximal regularization term in exchange of other assumptions on the problem's data, see 
				section \ref{other} for the latter  situation.
		\end{itemize}
	\end{remark}
\medskip

	\begin{remark}[Tractability of the subproblems] \label{tract}
		Although the practical aspects involving implementation are beyond the scope of this work, it is 
		important to discuss  some of these issues. In this regard we comment the general practicability of 
		the steps of {\bf ALBUM 2} whose alternating structure is often more favorable toward 
		implementation. Recall that {\bf ALBUM 2} features a simple dual step and two primal steps \`a la 
		Gauss-Seidel, one with respect to $u$ and one with respect to $x$, we discuss them below:
\medskip

		\begin{itemize}
			\item[(i)] As already mentioned  the $u$-step, defined through \eqref{AAA-ADPMM:Step1}, reduces 
				to the computation of the proximal mapping of the function $h$. Thus, this step can be 
				efficiently computed when the proximal map of $h$ is accessible, \ie via an and explicit 
				formula or via simple computations, see for instance, \cite{LT14,BST2014,BH2016} for 
				interesting examples.
    			\item[(ii)] The second subproblem, namely the $x$-step, is more involved. Let us discuss two 
    				protocols for solving this step approximately. For simplicity, suppose that $f_{0} \equiv 
    				0$. Then, the step \eqref{AAA-ADPMM:Step2} reduces to solve an {\em unconstrained Nonlinear 
    				Least Squares problem}, NLS for short. Therefore, the proposed Lagrangian methodology which 
    				allows to reduce the very general constrained nonlinear optimization model (CM) to solving 
    				sequentially unconstrained NLS subproblems, provides interesting future research avenues, 
    				whereby fundamental methods of NLS could be considered and exploited to analyze inexact 
    				variants. Indeed, NLS problems are  central in scientific computation, and even though these 
    				are nonconvex problems, there exist two well-known fundamental methods: Gauss-Newton and 
    				Levenberg-Marquardt, including many of their variants, which address this key computational 
    				problem within a very large body of literature, see \eg \cite{B1996-B,C2009-B}; see also the 
    				interesting work \cite{NLS}, where SDP relaxations are shown to find global solutions of 
    				some unconstrained NLS of polynomial type. Another approach to tackle the $x$-step is to 
    				approximate it through convex subproblems, which can then be efficiently solved. For this, 
    				we refer the reader to Section \ref{other} where we give further insights into this 
    				question, and we also introduce a new and easily implementable version of {\bf ALBUM 2} for 
    				(CM-L) problems.
    		\end{itemize}
	\end{remark}
	
\subsection{Convergence results for {\bf ALBUM 1} and {\bf ALBUM 2}.}
	To apply our main results (\cf Section \ref{Sec:ALBUM}), as previously explained, we first need to
	verify that \emph{joint minimization} and \emph{alternating minimization} satisfy the two conditions of
	Definition \ref{D:LagAlg}, \ie they are Lagrangian algorithmic maps. Recall that following our
	notations, for a given point $\xi := \xi^{k}$ at iteration $k$, the next point $\xi^{+}$ stands for
	$\xi^{k + 1}$.
	\begin{itemize}
		\item {\bf ALBUM 1} -- {\em Joint Minimization}
		
			From the choice of $\AAA_{\rho}$ (see (\ref{AAA-PMM})) we immediately get
			\begin{equation*}
    			\Laug_{\rho}\left(x^{+} , u^{+} , y\right) + \frac{\mu}{2}\norm{x^{+} - x}^{2} \leq \Laug_{\rho}
    			\left(x , u , y\right),
			\end{equation*}
			showing that Definition \ref{D:LagAlg}(i) holds true with $a = \mu$. Moreover, we also obtain
			\begin{equation}\label{optl}
        			\left(0 , 0\right) \in \left(\nabla_{x} \Laug_{\rho}\left(x^{+} , u^{+} , y\right) + \mu
        			\left(x^{+} - x\right) , \partial_{u} \Laug_{\rho}\left(x^{+} , u^{+} , y\right)\right),
			\end{equation}
			hence it follows that Definition \ref{D:LagAlg}(ii) immediately holds true with $b = \mu$.	
		\item {\bf ALBUM 2} -- {\em Alternating Minimization}
		
			Thanks to the choice of $\AAA_{\rho}$, we get from \eqref{AAA-ADPMM:Step1} that
			$\Laug_{\rho}\left(x , u^{+} , y\right) \leq \Laug_{\rho}\left(x , u , y\right)$ and from
			\eqref{AAA-ADPMM:Step2} we get that $\Laug_{\rho}\left(x^{+} , u^{+} , y\right) + \frac{\mu}{2}
			\norm{x^{+} - x}^{2} \leq \Laug_{\rho}\left(x , u^+ , y\right)$. Combining both inequalities
			shows that Definition \ref{D:LagAlg}(i) holds true with $a = \mu$. Moreover, as before it also
			follows immediately that Definition \ref{D:LagAlg}(ii) holds true with $b = \mu$.	
	\end{itemize}
\medskip
		
	We will now show that both {\bf ALBUM 1} and {\bf ALBUM 2} generate Lagrangian sequences $\Seq{z}{k}$.
	To this end we have to verify that conditions {\bf C3} and {\bf C4} hold true for both schemes.
\medskip

	First, for {\bf ALBUM 1} we obtain from \eqref{AAA-PMM} (\cf \eqref{optl}) that $0 =: v^{k + 1} \in
	\partial_{u} \Laug_{\rho_{k}}\left(x^{k + 1} , u^{k + 1} , y^{k}\right)$, and hence condition {\bf C3}
	holds true with any $c >0$. The next result shows that condition {\bf C3} also holds true for {\bf
	ALBUM 2}.
	\begin{proposition} \label{L:Cond3ADPMM}
		Let $\Seq{z}{k}$ be a sequence generated by {\bf ALBUM 2} which is assumed to be bounded. Then, for
		each $k \in \nn$, there exist a positive constant $c$ and $v^{k + 1} \in \partial_{u}
		\Laug_{\rho_{k}}\left(x^{k + 1} , u^{k + 1} , y^{k}\right)$, such that for all $k \geq \ksta$ we
		have
		\begin{equation*}
			\norm{v^{k + 1}} \leq c\norm{x^{k + 1} - x^{k}}.
		\end{equation*}
    \end{proposition}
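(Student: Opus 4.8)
The plan is to exploit the optimality condition of the $x$-subproblem \eqref{AAA-ADPMM:Step2} in {\bf ALBUM 2} and to compare it with the target subgradient of $\partial_{u}\Laug_{\rho_{k}}$ evaluated at the same triple $\left(x^{k+1},u^{k+1},y^{k}\right)$. Writing the optimality condition for \eqref{AAA-ADPMM:Step2} gives
\begin{equation*}
	0 = \nabla_{x}\Laug_{\rho_{k}}\left(x^{k+1},u^{k+1},y^{k}\right) + \mu\left(x^{k+1}-x^{k}\right),
\end{equation*}
which controls the $x$-gradient (this is essentially condition {\bf C2} with $b=\mu$, already established). The issue is that condition {\bf C3} asks for control of a $u$-subgradient, not the $x$-gradient; the $u$-subproblem \eqref{AAA-ADPMM:Step1} was solved at the \emph{old} point $x^{k}$, so its optimality condition reads $0 \in \partial_{u}\Laug_{\rho_{k}}\left(x^{k},u^{k+1},y^{k}\right)$, i.e. $y^{k}+\rho_{k}\left(F\left(x^{k}\right)-u^{k+1}\right) \in \partial h\left(u^{k+1}\right)$. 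The key observation is then that the \emph{difference} between $\partial_{u}\Laug_{\rho_{k}}$ at $x^{k}$ and at $x^{k+1}$ involves only the smooth quadratic coupling term, whose $u$-gradient is $\rho_{k}\left(u - F\left(x\right)\right) - y$; so moving $x^{k}\to x^{k+1}$ changes this by exactly $\rho_{k}\left(F\left(x^{k}\right)-F\left(x^{k+1}\right)\right)$.

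Concretely, I would set
\begin{equation*}
	v^{k+1} := \rho_{k}\left(F\left(x^{k}\right)-F\left(x^{k+1}\right)\right),
\end{equation*}
and verify that $v^{k+1}\in\partial_{u}\Laug_{\rho_{k}}\left(x^{k+1},u^{k+1},y^{k}\right)$. Indeed, from \eqref{critAc2}, $\partial_{u}\Laug_{\rho_{k}}\left(x^{k+1},u^{k+1},y^{k}\right) = \partial h\left(u^{k+1}\right) + \rho_{k}\left(u^{k+1}-F\left(x^{k+1}\right)\right) - y^{k}$; using the $u$-subproblem's optimality condition $-\rho_{k}\left(u^{k+1}-F\left(x^{k}\right)\right)+y^{k}\in\partial h\left(u^{k+1}\right)$ and adding, the $\partial h$ term and the $u^{k+1}$ terms cancel, leaving precisely $\rho_{k}\left(F\left(x^{k}\right)-F\left(x^{k+1}\right)\right)=v^{k+1}$. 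Then I would bound $\norm{v^{k+1}}$ using Lipschitz continuity of $F$ on $\Zone$: by Lemma \ref{L:Stab}, $x^{k},x^{k+1}\in\Zone$ for $k\geq\kinfo$ (hence for $k\geq\ksta$), and since $\nabla F$ is $L(F)$-Lipschitz on $\Zone$ (Assumption \ref{AssumptionB}(ii)) with $\Seq{x}{k}$ bounded, $F$ is itself Lipschitz on the relevant bounded portion of $\Zone$ with some constant $\ell$; thus $\norm{v^{k+1}}=\rho_{k}\norm{F\left(x^{k}\right)-F\left(x^{k+1}\right)}\leq \rho_{k}\,\ell\,\norm{x^{k+1}-x^{k}}$, and since $\rho_{k}=\rho_{\ksta}$ is constant for $k\geq\ksta$ (Lemma \ref{L:SufficentDecrease}), taking $c := \rho_{\ksta}\,\ell$ gives the claim.

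The main obstacle, such as it is, is purely bookkeeping: one must be careful that $F$ is only assumed $C^{1}$ with $\nabla F$ Lipschitz on $\Zone$, not globally Lipschitz, so the Lipschitz bound on $F$ itself must be derived on a bounded set intersected with $\Zone$ — which is legitimate because $\Seq{x}{k}$ is bounded and the iterates lie in $\Zone$ from index $\ksta$ on. A second small point is to confirm that $v^{k+1}$ as defined genuinely lies in the \emph{limiting} subdifferential $\partial_{u}$ (not merely the regular one); this is immediate since $\partial h$ appears directly and the added term is smooth, so the sum rule of \cite{RW1998-B} applies with equality. Everything else is a one-line substitution.
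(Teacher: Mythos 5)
Your proof is correct and follows essentially the same route as the paper: you take $v^{k+1}=\rho_{k}\left(F\left(x^{k}\right)-F\left(x^{k+1}\right)\right)$, verify membership in $\partial_{u}\Laug_{\rho_{k}}\left(x^{k+1},u^{k+1},y^{k}\right)$ via the optimality condition of the $u$-step and the constant-shift identity for the smooth coupling term, and bound its norm by the Mean Value Theorem using boundedness of $\nabla F$ along the (bounded) iterates in $\Zone$ together with the stabilization $\rho_{k}=\rho_{\ksta}$. The paper's proof is the same argument with $c=\rho_{\ksta}B$ where $B=\sup_{k\geq\kinfo}\norm{\nabla F\left(x^{k}\right)}$.
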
		
	\begin{proof}
		Since $\Seq{x}{k}$ is bounded, and for each $k \geq \kinfo$, we have that $\nabla F$ is Lipschitz
		continuous on $\Zone$ (by Assumption \ref{AssumptionB}(ii)), it follows that there exists $B > 0$
		such that
     	\begin{equation*}
     		\sup_{k \geq \kinfo} \norm{\nabla F\left(x^{k}\right)} \leq B.
     	\end{equation*}
		From \eqref{AAA-ADPMM:Step1} we get that
		\begin{equation*}
        		0 \in \partial_{u} \Laug_{\rho_{k}}\left(x^{k} , u^{k + 1} , y^{k}\right).
		\end{equation*}
	 	Using the definition of $\Laug_{\rho}$ (see \eqref{D:AugLAc}) we obtain that
		\begin{equation*}
        		\partial_{u} \Laug_{\rho_{k}}\left(x^{k + 1} , u^{k + 1} , y^{k}\right) = \partial_{u}
        		\Laug_{\rho_{k}}\left(x^{k} , u^{k + 1} , y^{k}\right) + \rho_{k}\left(F\left(x^{k}\right) - F
        		\left(x^{k + 1}\right)\right).
		\end{equation*}	
		Therefore, using the inclusion just above, we obtain for all $k \in \nn$ that
		\begin{equation*}
        		v^{k + 1} \equiv \rho_{k}\left(F\left(x^{k}\right) - F\left(x^{k + 1}\right)\right) \in
        		\partial_{u} \Laug_{\rho_{k}}\left(x^{k + 1} , u^{k + 1} , y^{k}\right),
		\end{equation*}		
		and
		\begin{equation*}
       		\norm{v^{k + 1}} = \rho_{k}\norm{F\left(x^{k + 1}\right) - F\left(x^{k}\right)} \leq
       		\rho_{\ksta}B\norm{x^{k + 1} - x^{k}},
		\end{equation*}	
		where the last inequality follows from the Mean Value Theorem\footnote{Recall that $\norm{F\left(u
		\right) - F\left(v\right)} \leq \sup_{\theta \in [0 , 1]} \norm{\nabla F\left(v + \theta\left(u - v
		\right)\right)}\norm{u - v}$, \cite[p. 69]{OR1970-B}} and the fact that $\rho_{k} \leq \rho_{\ksta}$
		for all $k \geq \ksta$ (see Lemma \ref{L:SufficentDecrease}). This proves that condition {\bf C3}
		holds true with $c = \rho_{\ksta}B$. 
	\end{proof}
\medskip

	Having established that the three conditions {\bf C1}, {\bf C2} and {\bf C3} of the basic methodology
	hold, to apply our main convergence results to {\bf ALBUM 1} and {\bf ALBUM 2}, it remains to verify the
	validity of the condition {\bf C4} for $h$. This is done next.
	\begin{proposition} \label{P:ALBUM12C4}
		Let $\Seq{z}{k}$ be a sequence generated by either {\bf ALBUM 1} or {\bf ALBUM 2}, which is assumed
		to be bounded. Let ${\bar z}$ be a limit point of a subsequence $\left\{ z^{k} \right\}_{k \in{\cal
		K}}$ of $\Seq{z}{k}$, then we have that $\limsup_{k \in {\cal K}\subset \nn} h\left(u^{k}\right)
		\leq h\left({\bar u}\right)$.
	\end{proposition}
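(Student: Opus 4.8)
The plan is to exploit that, in both {\bf ALBUM 1} and {\bf ALBUM 2}, the iterate $u^{k}$ is a minimizer of the $u$-block of the augmented Lagrangian: for {\bf ALBUM 1} this follows by freezing the joint minimizer in \eqref{AAA-PMM} at its last $x$-coordinate, so that the proximal term in $x$ cancels, while for {\bf ALBUM 2} it is exactly the update \eqref{AAA-ADPMM:Step1}. In either case there is $\xi_{k}\in\{x^{k-1},x^{k}\}$ — namely $\xi_{k}=x^{k}$ for {\bf ALBUM 1} and $\xi_{k}=x^{k-1}$ for {\bf ALBUM 2} — such that $u^{k}$ minimizes $u\mapsto h(u)+\langle y^{k-1},F(\xi_{k})-u\rangle+\frac{\rho_{k-1}}{2}\norm{F(\xi_{k})-u}^{2}$. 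First I would test this optimality against the competitor $u=\bar u$; after cancelling $f_{0}(\xi_{k})$ and rearranging, this gives, for every $k\ge 1$,
\[
h(u^{k})\ \le\ h(\bar u)+\langle y^{k-1},\,u^{k}-\bar u\rangle+\frac{\rho_{k-1}}{2}\bigl(\norm{F(\xi_{k})-\bar u}^{2}-\norm{F(\xi_{k})-u^{k}}^{2}\bigr),
\]
and I would rewrite the last parenthesis as $\langle u^{k}-\bar u,\,2F(\xi_{k})-\bar u-u^{k}\rangle$, so that the factor $u^{k}-\bar u$, which tends to $0$ along $\mathcal K$, is exposed.

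Next I would pass to the limit along $\mathcal K$. Since $\Seq{z}{k}$ is bounded, $\Seq{y}{k}$ is bounded and, by continuity of $F$, so is $\{F(x^{k})\}$; together with $u^{k}\to\bar u$ along $\mathcal K$ this forces both $\langle y^{k-1},u^{k}-\bar u\rangle\to 0$ and $\langle u^{k}-\bar u,2F(\xi_{k})-\bar u-u^{k}\rangle\to 0$. It remains only to keep the factor $\rho_{k-1}$ under control. Conditions {\bf C1}, {\bf C2}, {\bf C3} have already been verified above for {\bf ALBUM 1} and {\bf ALBUM 2}, and the finite-stabilization Lemma \ref{L:SufficentDecrease} may be invoked here because its proof relies only on {\bf C1}, {\bf C2} (through Lemmas \ref{L:Stab}, \ref{L:DescentProperty} and \ref{L:DualSequence}) and on the boundedness of $\Seq{y}{k}$, and not on {\bf C4}. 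Hence $\rho_{k}=\rho_{\ksta}$ for all $k\ge\ksta$, so that $\rho_{k-1}\le\rho_{\ksta}$ for all $k$, and taking $\limsup_{k\in\mathcal K}$ in the displayed inequality gives $\limsup_{k\in\mathcal K}h(u^{k})\le h(\bar u)$, which is precisely {\bf C4}.

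The hard part is not a computation but the mild circularity that must be navigated with care: {\bf C4} is one of the conditions defining a Lagrangian sequence, yet the tool used to bound $\rho_{k}$ is phrased for Lagrangian sequences. The point to check is that the stabilization machinery of Section \ref{Sec:KeyL} genuinely depends only on {\bf C1}--{\bf C2} and on boundedness, hence is legitimately available before {\bf C4} is established. Apart from that the argument is routine; in particular, lower semicontinuity of $h$, or any other regularity of $h$, plays no role in this one-sided inequality.
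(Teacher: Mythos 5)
Your proof is correct and follows essentially the same route as the paper's: test the $u$-minimization property of the iterate against the competitor $\bar u$, cancel the common terms, and pass to the limit along $\mathcal K$ using boundedness of $\Seq{y}{k}$, of $\{F(x^k)\}$, and of the stabilized penalty sequence. The only substantive difference is for \textbf{ALBUM 1}: the paper compares the joint minimizer with $(\bar x,\bar u)$ and must then invoke Proposition \ref{P:WeakConv} and the continuity of $f_0$ and $F$ to dispose of the leftover $f_0$- and proximal terms, whereas you compare with $(x^{k},\bar u)$ so those terms cancel identically --- a mild but genuine simplification. Your explicit check that Lemma \ref{L:SufficentDecrease} rests only on \textbf{C1}--\textbf{C2} and the boundedness of the multipliers, so that invoking it before \textbf{C4} is established involves no circularity, is accurate and is a point the paper uses tacitly (it writes $\rho_{\ksta}$ in this proof without comment).
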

	\begin{proof}
		The sequence $\Seq{z}{k}$ is bounded and therefore there exists a subsequence $\left\{ z^{m_{k}}
		\right\}_{k \in \nn}$ which converges to ${\bar z} = \left({\bar x} , {\bar u} , {\bar y}\right)$.
\medskip

		For {\bf ALBUM 1}: from the $x$-step  we have for all $k \geq \ksta$ that
    		\begin{equation*}
    			\Laug_{\rho_{\ksta}}\left(x^{k + 1} , u^{k + 1} , y^{k}\right) + \frac{\mu}{2}\norm{x^{k + 1} -
    			x^{k}}^{2} \leq \Laug_{\rho_{\ksta}}\left({\bar x} , {\bar u} , y^{k}\right) + \frac{\mu}{2}
    			\norm{{\bar x} - x^{k}}^{2}.
    		\end{equation*}
    		We now substitute $k$ by $m_{k} - 1$ and obtain from the definition of $\Laug_{\rho}$ (see
    		\eqref{D:AugLAc}) that	
    		\begin{align}
    			f_{0}\left(x^{m_{k}}\right) + h\left(u^{m_{k}}\right) & + \act{y^{m_{k} - 1} , F\left(x^{m_{k}}
    			\right) - F\left({\bar x}\right)} + \act{y^{m_{k} - 1} , {\bar u} - u^{m_{k}}} \nonumber \\
    			& + \frac{\rho_{\ksta}}{2}\norm{F\left(x^{m_{k}}\right) - u^{m_{k}}}^{2}\leq f_{0}\left({\bar
    			x}\right) + h\left({\bar u}\right) + \frac{\rho_{\ksta}}{2}\norm{F\left({\bar x}\right) -
    			{\bar u}}^{2} \notag \\
    			& + \frac{\mu}{2}\norm{{\bar x} - x^{m_{k} - 1}}^{2}. \label{h1}
    		\end{align}
  		Likewise, for {\bf ALBUM 2}, from the $u$-step (see \eqref{AAA-ADPMM:Step1}), we have for all $k
  		\geq \ksta$ that
    		\begin{equation*}
    			\Laug_{\rho_{\ksta}}\left(x^{k} , u^{k + 1} , y^{k}\right)\leq \Laug_{\rho_{\ksta}}\left(x^{k} ,
    			{\bar u} , y^{k}\right).
    		\end{equation*}
    		We now substitute $k$ by $m_{k} - 1$ and obtain from the definition of $\Laug_{\rho}$ (see
    		\eqref{D:AugLAc}) that
    		\begin{equation}\label{h2}
    			h\left(u^{m_{k}}\right) + \act{y^{m_{k} - 1} , u^{m_{k}} - {\bar u}} + \frac{\rho_{\ksta}}{2}
    			\norm{F\left(x^{m_{k} - 1}\right) - u^{m_{k}}}^{2} \leq h\left({\bar u}\right) +
    			\frac{\rho_{\ksta}}{2}\norm{F\left(x^{m_{k} - 1}\right) - {\bar u}}^{2}.
    		\end{equation}
    		For each of the just derived inequalities \eqref{h1} and \eqref{h2}, letting $k$ goes to $\infty$
    		and using the continuity of $f_{0}$ and $F$ (see Assumption \ref{AssumptionB}(ii) and (iii)),
    		together with Proposition \ref{P:WeakConv} (for the case of \eqref{h1}) yields in both cases that
    		\begin{equation*}
    			\limsup_{k \rightarrow \infty} h\left(u^{m_{k}}\right) \leq h\left({\bar u}\right),
    		\end{equation*}
		and the proof is completed.
    \end{proof}
\medskip

	To summarize at this point, we have therefore shown that the two main schemes {\bf ALBUM 1} and {\bf 
	ALBUM 2} produce \emph{Lagrangian sequences} and hence our convergence results Theorems \ref{T:SubConv} 
	and \ref{T:GlobConv} are applicable. Observe that we do not only prove that these well-known methods 
	converge in the absence of convexity for the general nonlinear composite model (CM), we also show how to 
	apply them under weak assumptions through the use of a new adaptive regime.

\subsection{Towards implementable variants of {\bf ALBUM}.} \label{other}
	To further illustrate the potential benefits and generality of our approach we now consider further 
	specific instances and variants of {\bf ALBUM} under other relevant assumptions on data information 
	which occur in many interesting applications. This allows us to extend some recent results in the 
	literature and even to propose a new scheme.
\medskip

	{\bf The classical method of alternating direction of multipliers (ADM).} Consider the limiting case of 
	{\bf ALBUM 2} obtained with $\mu \equiv 0$. We recover the classical Alternating Direction of 
	Multipliers (ADM) \cite{GLT1989}. Under the additional assumption that the augmented Lagrangian $x 
	\rightarrow \Laug_{\rho}\left(x , u , y\right)$ is $\sigma$-strongly convex, for any fixed $u , y \in 
	\rr^{m}$, we can obtain global convergence of the ADM to critical points of the {\em nonlinear} 
	nonconvex composite model (CM). Indeed, in this case {\bf ALBUM 2} yields (recall 
	\eqref{AAA-ADPMM:Step1} and \eqref{AAA-ADPMM:Step2}) that
	\begin{equation} \label{ALBUM2SC:Step1}
		\Laug_{\rho}\left(x , u^{+} , y\right) \leq \Laug_{\rho}\left(x , u , y\right),
	\end{equation}
	and
	\begin{equation} \label{ALBUM2SC:Step2}
		\nabla_{x} \Laug_{\rho}\left(x^{+} , u^{+} , y\right) = 0 .
	\end{equation}
	Now, by the $\sigma$-strong convexity of $x \rightarrow \Laug_{\rho}\left(x , u^{+} , y\right)$ together 
	with \eqref{ALBUM2SC:Step2} we have that
	\begin{equation*}
    		\Laug_{\rho}\left(x^{+} , u^{+} , y\right) + \frac{\sigma}{2}\norm{x^{+} - x}^{2} \leq \Laug_{\rho}
    		\left(x , u^{+} , y\right),
	\end{equation*}
	and hence from \eqref{ALBUM2SC:Step1} it follows that Definition \ref{D:LagAlg}(i) holds true with $a =
	\sigma$. Moreover, we also get that $\norm{\nabla_{x} \Laug_{\rho}\left(x^{+} , u^{+} , y\right)} = 0
	\leq b\norm{x^{+} - x}$, showing that Definition \ref{D:LagAlg}(ii) immediately holds true with any $b > 
	0$. Now it is trivial to see that the proofs of conditions {\bf C3} and {\bf C4} as done for the case $
	\mu > 0$ for {\bf ALBUM 2} remain valid for the case $\mu = 0$. Thus our convergence results apply, and 
	extend the recent result \cite[Theorem 4]{LP2015}, which uses the same assumption on the Lagrangian, but 
	was valid only for the linear case (\ie $F\left(x\right) \equiv Fx$). Furthermore, for the  linear case  
	with a matrix $F$  full row rank, we have $\gamma =\sqrt{\lambda_{\min}(FF^{T})}>0$, and since $a = 
	\sigma$ and $b$ can be any positive number, (\eg we can set $b = 1$), we immediately obtain the 
	threshold value for $\rho$ (see \eqref{rholin} in Remark \ref{r:lin}) that warrant our convergence 
	results:
	\begin{equation*}
		\rho > \frac{4 ((L(f_{0}) + 1)^{2} +1)}{\sigma\lambda_{\min}(FF^{T})}.
	\end{equation*}
\medskip

	{\bf Tractable convex subproblems for ALBUM 2.} In relation to Remark \ref{tract}, we focus on the 
	tractability of the $x$-step (as already mentioned, the $u$-step is easier for any proximable $h$). We 
	illustrate here a specific but fundamental aspect of our family of methods through the important case of 
	{\bf ALBUM 2}. In addition to the standing assumptions, we assume that $f_{0}$ is $C^{2}$ with Lipschitz 
	continuous gradient (for simplicity) and $F$ is linear (so that the information zone is the whole space, 
	\cf Remark \ref{r:info}). The constant $\rho > 0$ can thus be determined. We observe that for fixed 
	couple $\left(u , y\right)$, the function $\Laug_{\rho}\left(\cdot , u , y\right)$ is $C^{2}$ whenever 
	$u$ is in $\dom h$ and that its Hessian matrix is given by $x \rightarrow \nabla^{2} f_{0}\left(x\right) 
	+ \rho F^{T}F$. As a consequence of the Lipschitz continuity assumption of $f_{0}$ we have that:
	\begin{equation}
		\sup _{(x , u , y) \in \rr^{n} \times \rr^{m} \times \dom{h}} \norm{\nabla_{x}^{2} \Laug_{\rho}
		\left(x , u , y\right)} \leq L(f_{0}) + \rho\lambda_{\max}(FF^{T}).
	\end{equation}
	Thus, with $\mu = L(f_{0}) + \rho\lambda_{\max}(FF^{T})$,  the $x$-step in {\bf ALBUM 2} consists in 
	minimizing a {\em convex function} $x \rightarrow \Laug_{\rho}\left(x , u^{k + 1} , y^{k}\right) + 
	\left(\mu/2\right)\norm{x - x^{k}}^{2}$ with {\em known Lipschitz continuous gradient}.
\medskip

	{\bf Solving general semi-algebraic feasibility problems with ALBUM 2.} The specialization of {\bf ALBUM 
	2} to the general feasibility problem described in Example \ref{feas} provides a new parallel projection  
	method; the details of the easy derivation of the corresponding steps in this case are left to the 
	reader. In view of our general results, the penalty parameter $\rho > 0$ can be determined and no other 
	assumption than semi-algebraicity of the subsets $S_{i}$, $i = 1 , 2, \ldots , p$, is necessary to 
	obtain global convergence of the methods (under our classical boundedness assumptions)
\medskip

	{\bf A simple explicit algorithm: Proximal Linearized Alternating Minimization.} We consider here a 
	proximal linearized instance of {\bf ALBUM 2} with proven global convergence results which seems to be 
	new in the literature for the nonconvex composite model. Our setting here is confined to the particular, 
	yet interesting and important case, where in the model (CM):
\medskip

	\begin{itemize}
		\item The function $f_{0}$ has an $L(f_{0})$-Lipschitz continuous gradient on $\rr^{n}$.
		\item The mapping $F$ is linear, namely  $F\left(x\right) \equiv Fx$ for all $x \in \rr^{n}$, for 
			some matrix $F \in \rr^{n \times m}$ with full row rank.
  	\end{itemize}

  	Furthermore, we additionally assume that $\kappa(FF^{T}) < 2$, where $\kappa(A)$ denotes the condition 
  	number of a square matrix $A$, namely the ratio $\lambda_{\max}(A) / \lambda_{\min}(A)$. 
\medskip

  	Note that this assumption  always holds true whenever $FF^{T}$ or $F^{T}F$ is the identity matrix, which 
  	often occurs in applications, \eg in some problems in signal recovery \cite{BBC2011}.
\medskip

 	Recall (cf. Remark \ref{r:info}) that under the above hypothesis on the problem's data, Assumption 
 	\ref{AssumptionB} holds with ${\cal Z} \equiv \rr^{n}$, and we also have that $\gamma = 
 	\sqrt{\lambda_{\min}(FF^T)} > 0$. The augmented Lagrangian in this case reads (\cf \eqref{D:AugLAc}), 
 	for $\rho > 0$, as follows
	\begin{equation*}
		\Laug_{\rho}\left (x , u , y\right) := f_{0}\left(x\right) + h\left(u\right) + \act{y , Fx
		 - u} + \frac{\rho}{2}\norm{Fx - u}^{2}.
	\end{equation*}
	We then consider approximating the $x$-step in {\bf ALBUM 2} (leaving the $u$-step untouched) through 
	the following scheme:
\medskip

	\begin{itemize}
		\item {\bf ALBUM 3} -- {\em Proximal Linearized Alternating Minimization}
	\end{itemize}
	\begin{align}
		u^{k + 1} & \in \argmin_{u} \Laug_{\rho_{k}}\left(x^{k} , u , y^{k}\right), \label{ALBUM3:1} \\
		x^{k + 1} & \in \argmin_{x} \left\{ \act{x - x^{k} , \nabla_{x} \Laug_{\rho_{k}}\left(x^{k} , u^{k +
		1} , y^{k}\right)} + \frac{\mu}{2}\norm{x - x^{k}}^{2} \right\}, \quad (\mu > 0). \label{ALBUM3:2}
	\end{align}
	Thus, the $x$-step consists of first linearizing  the augmented Lagrangian around a given point and 
	adding a proximal term, which is a common strategy to generate a simpler approximate step (see \eg 
	\cite{BST2014}), and hence \eqref{ALBUM3:2} is nothing else but {\em one  shot} of an explicit gradient 
	step for minimizing $\Laug_{\rho_{k}}\left (x , u^{k+1} , y^{k}\right)$, with an easy explicit formula.
\medskip

	To apply the convergence results of Section \ref{Sec:ALBUM}, we first need to verify that the 
	corresponding algorithmic map ${\cal A}_{\rho}$ of {\bf ALBUM 3} satisfies the two conditions of 
	Definition \ref{D:LagAlg} ,\ie is a Lagrangian algorithmic map. For that purpose, first note that given 
	couple $\left(u , y\right)$, the gradient of $\Laug_{\rho}\left(x , u , y\right)$ with respect to $x$, 
	is the mapping $x \rightarrow \nabla f_{0}\left(x\right) + F^{T}y + \rho F^{T}\left(Fx - u\right)$, 
	which is a $L$-Lipschitz continuous mapping, with $L:= L(f_{0}) + \rho\norm{F}^{2}$. Invoking the well 
	known Descent Lemma, it follows that condition {\bf C1} holds with $a = \mu - L/2$.  However, observe 
	that contrary to ALBUM 1 and 2, the constant $a$ depends on $\rho$ through $L$, and $a>0$ will be 
	warranted thanks to Lemma \ref{L:ALBUM3Rho} given below.
\medskip

	Next, using the steps of the corresponding algorithmic map $\AAA_{\rho}$, together with the fact that 
	$f_{0}$ admits an $L(f_{0})$-Lipschitz continuous gradient, one easily verifies that for any $k \geq 0$,
	\begin{align} \label{L:C2ALBUM3}
        	\norm{\nabla_{x} \Laug_{\rho}\left(x^{+} , u^{+} , y\right)} & \leq \norm{\nabla_{x} \Laug_{\rho}
        	\left(x^{+} , u^{+} , y\right) - \nabla_{x} \Laug_{\rho}\left(x , u^{+} , y\right)} + 
        	\norm{\nabla_{x} \Laug_{\rho}\left(x , u^{+} , y\right)} \nonumber \\
        	& \leq \left(L(f_{0}) + \rho\norm{F}^{2} + \mu\right)\norm{x^{+} - x}.
  	\end{align}
	This shows that condition {\bf C2} holds true with $b = L(f_{0}) + \rho\norm{F}^{2} + \mu$. In addition, 
	condition {\bf C3} is immediate, since here the optimality condition of the $u$-step (see 
	\eqref{ALBUM3:1}) implies that
	\begin{equation*}
		\partial_{u} \Laug_{\rho_{k}}\left(x^{k + 1} , u^{k + 1} , y^{k}\right) \ni v^{k + 1} = \rho F
		\left(x^{k + 1} - x^{k} \right) \; \Rightarrow \; \norm{v^{k + 1}} \leq \rho\norm{F}\norm{x^{k + 1}
		- x^{k}},
	\end{equation*}
 	showing that condition {\bf C3} holds with $c = \rho\norm{F}$.
\medskip

	Finally, since the $u$-step in {\bf ALBUM 3} is identical to the one in {\bf ALBUM 2}, the statement and 
	the proof of Proposition \ref{P:ALBUM12C4} holds in this case with the same proof (see only the part 
	that related to {\bf  ALBUM 2}), and hence condition {\bf C4} holds true in this case too.
\medskip

	Despite the fact that conditions {\bf C1--C4} are satisfied it is important to realize that our general 
	theorem does not apply at this stage because both $a$ and $b$ depend on $\rho$ and may become negative 
	if $\rho$ is too large. In order to circumvent this difficulty and obtain the general convergence of the 
	scheme (as in Theorems \ref{T:SubConv} and \ref{T:GlobConv}), it suffices to guarantee a sufficient 
	descent of the Lyapunov function $\EE_{\beta}$. For this we need that \eqref{descond} holds true (see 
	Remark~\ref{liap}(b)) for a couple of well chosen $\mu$ and $\rho$, that is,
    	\begin{equation} \label{ParaDescGene}
			\frac{a}{2} - \frac{d_{1} + d_{2}}{\rho} > 0.
	\end{equation}
	For that purpose let us first observe that a stronger version of Lemma \ref{L:DualSequence} can be 
	derived. Just follow the same proof  by exploiting the linearity of $F$, and note that the boundedness 
	assumption on the sequence of multipliers $\Seq{y}{k}$ in not anymore needed in that case. We leave the 
	details to the reader, and record this result below.
	\begin{lemma} \label{L:DualSequenceLinALBUM3}
        Let $\Seq{z}{k}$ be a Lagrangian sequence. Then, the following inequality holds
        true for any $k \geq 0$,
		\begin{equation} \label{L:DualSequenceLinALBUM3:0}
        		\norm{y^{k + 1} - y^{k}}^{2} \leq d_{1}\norm{x^{k + 1} - x^{k}}^{2} + d_{2}\norm{x^{k} -x^{k
        		- 1}}^{2},
		\end{equation}
		where
		\begin{equation} \label{L:DualSequenceLinALBUM3:1}
        		d_{1} = \frac{2\norm{{\cal M}}^{2}}{\lambda_{\min}(FF^{T})}, \quad d_{2} =
        		\frac{2\left(L(f_{0}) + \norm{{\cal M}}\right)^{2}}{\lambda_{\min}(FF^{T})},
		\end{equation}
		and ${\cal M} := \mu I_{n} - \rho F^{T}F$.
    \end{lemma}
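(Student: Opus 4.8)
The plan is to mimic the proof of Lemma~\ref{L:DualSequence}, but to exploit two simplifications available in the present setting: $F$ is linear (so $\nabla F$ is constant) and the $x$-step of {\bf ALBUM 3} is an \emph{explicit} gradient step, which yields an exact identity rather than merely the inequality coming from condition {\bf C2}. First I would write down the optimality condition of the $x$-step~\eqref{ALBUM3:2}, namely $\mu\,(x^{k}-x^{k+1}) = \nabla_{x}\Laug_{\rho}(x^{k},u^{k+1},y^{k})$, expand the gradient via $\nabla_{x}\Laug_{\rho}(x,u,y)=\nabla f_{0}(x)+F^{T}\big(y+\rho(Fx-u)\big)$, split $Fx^{k}-u^{k+1}=(Fx^{k+1}-u^{k+1})+F(x^{k}-x^{k+1})$, and substitute the multiplier update $y^{k+1}=y^{k}+\rho(Fx^{k+1}-u^{k+1})$. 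After rearranging, the $\rho F^{T}F(x^{k}-x^{k+1})$ term combines with $\mu(x^{k}-x^{k+1})$ and one obtains the key identity
\begin{equation*}
\nabla f_{0}(x^{k})+F^{T}y^{k+1} = {\cal M}\,(x^{k}-x^{k+1}), \qquad {\cal M}:=\mu I_{n}-\rho F^{T}F ,
\end{equation*}
where $\rho_{k}\equiv\rho$ is legitimate because, $F$ being linear, $\Zone=\rr^{n}$ and the adaptive Step~3.3 is inactive (\cf Remark~\ref{r:lin}).

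With the key identity in hand, I would use the decomposition
\begin{equation*}
F^{T}(y^{k+1}-y^{k}) = \big(\nabla f_{0}(x^{k})+F^{T}y^{k+1}\big) - \big(\nabla f_{0}(x^{k-1})+F^{T}y^{k}\big) + \big(\nabla f_{0}(x^{k-1})-\nabla f_{0}(x^{k})\big),
\end{equation*}
plug the key identity in at indices $k$ and $k-1$, and estimate the right-hand side by the triangle inequality together with $\|{\cal M}v\|\le\|{\cal M}\|\,\|v\|$ and the $L(f_{0})$-Lipschitz continuity of $\nabla f_{0}$ on $\rr^{n}$; this gives $\|F^{T}(y^{k+1}-y^{k})\| \le \|{\cal M}\|\,\|x^{k+1}-x^{k}\| + \big(L(f_{0})+\|{\cal M}\|\big)\,\|x^{k}-x^{k-1}\|$. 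Next I would lower-bound the left-hand side by $\|F^{T}w\|^{2}=w^{T}FF^{T}w\ge\lambda_{\min}(FF^{T})\,\|w\|^{2}$, i.e. $\gamma\,\|y^{k+1}-y^{k}\|\le\|F^{T}(y^{k+1}-y^{k})\|$ with $\gamma=\sqrt{\lambda_{\min}(FF^{T})}$. Squaring, using $(r+s)^{2}\le 2r^{2}+2s^{2}$, and dividing by $\gamma^{2}=\lambda_{\min}(FF^{T})$ then yields exactly~\eqref{L:DualSequenceLinALBUM3:0} with $d_{1}=2\|{\cal M}\|^{2}/\lambda_{\min}(FF^{T})$ and $d_{2}=2\big(L(f_{0})+\|{\cal M}\|\big)^{2}/\lambda_{\min}(FF^{T})$.

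The points deserving care are exactly the two reasons this is a \emph{strengthening} of Lemma~\ref{L:DualSequence}. First, since $\nabla F$ is constant the cross term $(\nabla F(x^{k+1})-\nabla F(x^{k}))^{T}y^{k}$ that appears in~\eqref{L:DualSequence:2} vanishes identically, so the contribution $L(F)\Lambda$ drops out and no a priori bound $\Lambda$ on $\Seq{y}{k}$ is needed. Second, one must keep the operator ${\cal M}=\mu I_{n}-\rho F^{T}F$ intact rather than overestimating it by the coarse constant $b=L(f_{0})+\rho\|F\|^{2}+\mu$ coming from {\bf C2}: it is precisely the finer quantity $\|{\cal M}\|$, which can be made small by a suitable choice of $(\mu,\rho)$, that later makes the descent requirement~\eqref{ParaDescGene} satisfiable under the hypothesis $\kappa(FF^{T})<2$. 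The remaining manipulations are routine and identical to those in Lemma~\ref{L:DualSequence}.
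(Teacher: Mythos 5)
Your proof is correct and follows the route the paper intends (the paper only sketches it, telling the reader to redo Lemma~\ref{L:DualSequence} using linearity); in particular, you correctly supply the one non-obvious ingredient, namely replacing the coarse bound of condition {\bf C2} by the exact optimality identity $\nabla f_{0}(x^{k})+F^{T}y^{k+1}={\cal M}\,(x^{k}-x^{k+1})$ coming from the explicit gradient step, which is precisely what is needed to obtain the stated constants in terms of $\norm{{\cal M}}$ rather than $b=L(f_{0})+\rho\norm{F}^{2}+\mu$. The remaining steps (three-term decomposition, the uniform regularity bound $\norm{F^{T}w}\geq\gamma\norm{w}$ with $\gamma^{2}=\lambda_{\min}(FF^{T})$, and squaring via $(r+s)^{2}\leq 2r^{2}+2s^{2}$) mirror the proof of Lemma~\ref{L:DualSequence} and reproduce \eqref{L:DualSequenceLinALBUM3:1} exactly, including the placement of $L(f_{0})$ in $d_{2}$.
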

	Equipped with this result, we now show that we can find positive constants $\rho$ and $\mu$ in terms of 
	the problem's data so that \eqref{ParaDescGene} holds, and hence our convergence results for {\bf ALBUM 
	3}: Theorems \ref{T:SubConv} and \ref{T:GlobConv} with semi-algebraic data, apply. 
	\begin{lemma}[Determining threshold value for $\rho$] \label{L:ALBUM3Rho}
		Let $F : \rr^{n} \rightarrow \rr^{m}$ be a linear mapping for which $\kappa(FF^{T}) < 2$. Let
		$\Seq{z}{k}$ be a sequence generated by {\bf ALBUM 3}. Then, there exists a constant ${\bar
		\rho}$ such that \eqref{ParaDescGene} holds for any $\rho > {\bar \rho}$, and with $\mu \in
		\left(\mu_{1} , \mu_{2}\right)$ for some $\mu_{1} , \mu_{2} >0$, where both ${\bar \rho}$, $\mu_{1}$
		and $\mu_{2}$ are given in terms of the problem's data $L(f_{0})$ and $\gamma$.
	\end{lemma}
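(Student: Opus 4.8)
The plan is to convert \eqref{ParaDescGene} into one explicit scalar inequality in the two free parameters $\mu$ and $\rho$ and then produce a solution. First I would collect the pieces already established for \textbf{ALBUM 3}: condition \textbf{C1} holds with $a = \mu - L/2 = \mu - \tfrac12\bigl(L(f_{0}) + \norm{F}^{2}\rho\bigr)$, and Lemma \ref{L:DualSequenceLinALBUM3} gives $d_{1} + d_{2} = \tfrac{2}{\gamma^{2}}\bigl(\norm{\mathcal{M}}^{2} + (L(f_{0}) + \norm{\mathcal{M}})^{2}\bigr)$ with $\mathcal{M} = \mu I_{n} - \rho F^{T}F$. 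I would then record the elementary spectral facts $\norm{F}^{2} = \lambda_{\max}(FF^{T})$, $\gamma^{2} = \lambda_{\min}(FF^{T})$, and that (with $F^{T}F \succ 0$ in this regime) the operator norm $\norm{\mu I_{n} - \rho F^{T}F}$ equals $\max_{\lambda}\left| \mu - \rho\lambda \right|$ over the eigenvalues $\lambda$ of $F^{T}F$, which all lie in $\bigl[\lambda_{\min}(FF^{T}), \lambda_{\max}(FF^{T})\bigr]$; finally, the hypothesis $\kappa(FF^{T}) < 2$ says exactly $\lambda_{\max}(FF^{T}) - \lambda_{\min}(FF^{T}) < \gamma^{2}$. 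Plugging everything into \eqref{ParaDescGene} reduces the lemma to exhibiting $\mu,\rho$ for which
\[
\tfrac12\Bigl(\mu - \tfrac12\bigl(L(f_{0}) + \lambda_{\max}(FF^{T})\rho\bigr)\Bigr) \;>\; \tfrac{2}{\gamma^{2}\rho}\Bigl(\norm{\mathcal{M}}^{2} + (L(f_{0}) + \norm{\mathcal{M}})^{2}\Bigr) \;>\; 0 .
\]

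The crux is the choice of $\mu$, and the key idea is to slave $\mu$ to $\rho$. There is a genuine tension: forcing the middle term positive (that is $a>0$) drives $\mu$ up to order $\tfrac12\lambda_{\max}(FF^{T})\rho$, while the right-hand side grows like $\norm{\mathcal{M}}^{2}$, so if $\mu$ were kept bounded — or merely $o(\rho)$ — then $\norm{\mathcal{M}} = \norm{\mu I_{n} - \rho F^{T}F}$ would be of order $\lambda_{\max}(FF^{T})\rho$ and $(d_{1} + d_{2})/\rho$ would diverge. So I would set $\mu = \rho\bar\lambda + \nu$ with $\bar\lambda := \tfrac12\bigl(\lambda_{\min}(FF^{T}) + \lambda_{\max}(FF^{T})\bigr)$, the midpoint of the spectrum of $F^{T}F$, and $\nu$ confined to a bounded window. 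This centering is precisely what minimizes $\max_{\lambda}\left| \mu - \rho\lambda \right|$, so $\norm{\mathcal{M}} \le \tfrac{\rho}{2}\bigl(\lambda_{\max}(FF^{T}) - \lambda_{\min}(FF^{T})\bigr) + \left| \nu \right|$. For this choice $a = \tfrac12\gamma^{2}\rho + \nu - \tfrac12 L(f_{0})$, which is positive once $\rho$ is large, while the right-hand side of the display is of order $\rho\bigl(\lambda_{\max}(FF^{T}) - \lambda_{\min}(FF^{T})\bigr)^{2}/\gamma^{2}$. The hypothesis $\kappa(FF^{T}) < 2$, i.e. $\lambda_{\max}(FF^{T}) - \lambda_{\min}(FF^{T}) < \gamma^{2}$, enters here precisely to keep this quantity small relative to $\gamma^{2}\rho$, which is what makes the decisive leading coefficient positive (the precise threshold falling out of the calculation).

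Concretely I would then clear the factor $1/\rho$, so that \eqref{ParaDescGene} becomes a quadratic inequality $c_{2}\rho^{2} + c_{1}\rho + c_{0} > 0$ whose coefficients are explicit in $L(f_{0})$, $\gamma$ and $\lambda_{\max}(FF^{T})$ (the latter being $< 2\gamma^{2}$), with $c_{2} > 0$ thanks to $\kappa(FF^{T}) < 2$; take $\bar\rho$ to be the larger root of this quadratic, so that the inequality — and hence \eqref{descond} with $\tau = 0$ — holds for every $\rho > \bar\rho$. Because the hypothesis leaves a strictly positive margin in $c_{2}$, the set of admissible offsets $\nu$ (equivalently of values of $\mu$) for which both $a > 0$ and the descent inequality survive is a nonempty open interval, which I would call $(\mu_{1}, \mu_{2})$; all three of $\bar\rho$, $\mu_{1}$, $\mu_{2}$ are then read off in terms of the data $L(f_{0})$ and $\gamma$. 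With \eqref{ParaDescGene} in hand, Lemma \ref{L:SufficentDecrease} applies to \textbf{ALBUM 3} and Theorems \ref{T:SubConv} and \ref{T:GlobConv} follow.

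The main obstacle is careful bookkeeping rather than a new idea: one must track the lower-order $L(f_{0})$ terms dropped in the heuristic above, confirm that the window $(\mu_{1}, \mu_{2})$ is genuinely nonempty and that $a$ stays positive throughout it, and check that no circularity sneaks in. The last point is exactly why Lemma \ref{L:DualSequenceLinALBUM3} — and not Lemma \ref{L:DualSequence} — is used: in the linear case the constants $d_{1}, d_{2}$ depend only on $\mu$, $\rho$ and the data and not on any a priori bound $\Lambda$ of the multiplier sequence, so $\bar\rho$, $\mu_{1}$, $\mu_{2}$ are genuinely data-driven and can be fixed before the method is launched, in the spirit of Remark \ref{r:lin}.
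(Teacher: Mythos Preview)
Your overall strategy---slave $\mu$ to $\rho$, substitute the pieces $a=\mu-\tfrac12(L(f_0)+\rho\norm{F}^2)$ and the $d_1,d_2$ of Lemma~\ref{L:DualSequenceLinALBUM3} into \eqref{ParaDescGene}, clear $\rho$, and show the resulting quadratic has a positive leading coefficient---is exactly the paper's plan. But your concrete choice of anchor, the spectral midpoint $\bar\lambda=\tfrac12(\lambda_{\min}+\lambda_{\max})$, does \emph{not} deliver the full range $\kappa(FF^T)<2$; it only yields $\kappa<3/2$. Indeed, with $\mu=\rho\bar\lambda$ one has $a=\tfrac12\gamma^2\rho+O(1)$ and $\norm{\mathcal{M}}\le\tfrac12\rho\delta$ with $\delta:=\lambda_{\max}-\lambda_{\min}$, so $(d_1+d_2)/\rho\sim \rho\delta^2/\gamma^2$. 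The leading-order comparison $\tfrac14\gamma^2\rho>\rho\delta^2/\gamma^2$ forces $2\delta<\gamma^2$, i.e.\ $\lambda_{\max}<\tfrac32\lambda_{\min}$, which is strictly stronger than the hypothesis. Your sentence ``$\kappa(FF^T)<2$ \ldots\ is what makes the decisive leading coefficient positive'' is therefore off by a factor of $2$ in $\delta$, and as stated the argument does not close for $3/2\le\kappa<2$.

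The paper avoids this by a different substitution: it sets $t:=\mu-\rho\gamma^{2}$ and uses $\norm{\mathcal{M}}=t$ (taking $\lambda_{\min}(F^TF)=\gamma^2$), so that $\norm{\mathcal{M}}$ becomes the \emph{free variable} rather than something bounded by $\rho\delta/2$. Plugging in, \eqref{ParaDescGene} becomes a quadratic $\psi(t)<0$ in $t$ with $\rho$ as a parameter; its discriminant is $\Delta_\psi=\rho^2\gamma^2\eta-32\rho\gamma^2\ell-64\ell^2$ with $\eta:=33\gamma^2-16\norm{F}^2$, and $\kappa<2$ (i.e.\ $\norm{F}^2<2\gamma^2$) is precisely what makes $\eta>0$, hence $\Delta_\psi>0$ for all $\rho>\bar\rho$, producing the explicit interval $(t_1,t_2)$ and thus $(\mu_1,\mu_2)$. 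If you want to keep your midpoint idea, you would need to strengthen the hypothesis to $\kappa<3/2$; to recover the paper's threshold you should instead anchor $\mu$ at $\rho\gamma^2$ and let the offset $t$ itself scale with $\rho$.
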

	\begin{proof}
		For convenience we denote $\ell := L(f_{0})$. Using Lemma \ref{L:DualSequenceLinALBUM3} and the fact
		that $a = \mu - \left(\ell + \rho\norm{F}^{2}\right)/2$, in order to satisfy \eqref{ParaDescGene},
		we need to find $\rho > 0$ and $\mu > 0$ such that
   		\begin{equation} \label{L:ALBUM3Rho:1}
			\frac{\mu - \frac{\ell + \rho\norm{F}^{2}}{2}}{2} - \frac{2\norm{{\cal M}}^{2} + 2\left(\ell + 
			\norm{{\cal M}}\right)^{2}}{\rho\gamma^{2}} > 0.
		\end{equation}
		Rewriting this inequality yields the following equivalent one
		\begin{equation*}
			16\norm{{\cal M}}^{2} + \rho\gamma^{2}\left(\ell + \rho\norm{F}^{2} - 2\mu\right) + 8\ell^{2} + 
			16\ell\norm{{\cal M}} < 0.
		\end{equation*}
		Since ${\cal M} = \mu I - \rho F^{T}F$, and symmetric we have
		\begin{equation*}
			\norm{{\cal M}} = \lambda_{\max}({\cal M}) = \lambda_{\max}(\mu I - \rho F^{T}F) =
			\lambda_{\max}(\mu I) - \rho\lambda_{\min}(F^{T}F) = \mu - \rho\gamma^{2},
		\end{equation*}
		where the last equality uses the fact $\lambda_{\min}(F^{T}F)=\lambda_{\min}(FF^{T}) = \gamma^2$.

		Therefore, defining $t: = \mu - \rho\gamma^{2}\equiv \norm{{\cal M}}$, and rearranging terms, the 
		above inequality reduces to show that
		\begin{equation} \label{psi}
			\psi\left(t\right):= 16t^{2} - 2\left(\rho\gamma^{2} - 8\ell\right)t + \rho\gamma^{2}\left(\ell 
			+ \rho\norm{F}^{2} - 2\rho\gamma^{2}\right) + 8\ell^{2} 	< 0.
		\end{equation}	
		Computing the (reduced) discriminant $\Delta_{\psi}$ of the above quadratic function $\psi
		\left(\cdot\right)$ yields
		\begin{equation*}
			\Delta_{\psi} := \left(\rho\gamma^{2} - 8\ell\right)^{2} - 16\left(\rho\gamma^{2}\left(\ell + 
			\rho\norm{F}^{2} - 2\rho\gamma^{2}\right) + 8\ell^{2}\right)  = \rho^{2}\gamma^{2}\eta - 32\rho
			\gamma^{2}\ell - 64\ell^{2},
		\end{equation*}
		where thanks to our assumption  $\kappa(FF^{T}) < 2$, we have $\eta := \left(33\gamma^{2} - 
		16\norm{F}^{2}\right)>0$. Therefore, \eqref{psi} holds (and hence so does \eqref{L:ALBUM3Rho:1}), if 
		$\Delta_{\psi} > 0$ and $t_{1} < t < t_{2}$ where $t_{1}$ and $t_{2}$ are the zeroes of $\psi\left(t
		\right)$. Some algebra then shows that the latter is satisfied with
		\begin{equation*}
			\rho > {\bar \rho}:= \frac{8\ell}{\eta\gamma}\left(2\gamma + \sqrt{4\gamma^{2} + \eta}\right),
		\end{equation*}
		and
		\begin{equation}  \label{L:ALBUM3Rho:2}
			t_{1} \equiv \frac{\left(\rho\gamma^{2} - 8\ell\right) - \sqrt{\Delta_{\psi}}}{16} < t <  
			\frac{\left(\rho\gamma^{2} - 8\ell\right) + \sqrt{\Delta_{\psi}}}{16} \equiv t_{2}.
		\end{equation}
		Moreover, since $\norm{{\cal M}} = \mu - \rho\gamma^{2} = t$, we must have $t \geq 0$, and indeed it 
		is easy to check that $t_{1} >0$. Using the relation $\mu = t + \rho\gamma^{2}$, we can rewrite  
		\eqref{L:ALBUM3Rho:2} as follows
		\begin{equation*}
			\mu_{1} \equiv \frac{\left(17\rho\gamma^{2} - 8\ell\right) - \sqrt{\Delta_{\psi}}}{16} < \mu <  
			\frac{\left(17\rho\gamma^{2} - 8\ell\right) + \sqrt{\Delta_{\psi}}}{16} \equiv \mu_{2}.
		\end{equation*}		
		and the proof is completed. 
	\end{proof}
	
\section*{Acknowledgments.}
	The research of J\'{e}r\^{o}me Bolte is sponsored by the Air Force Office of Scientific Research, Air
	Force Material Command, USAF, under grant number FA9550-15-1-0500 \& the FMJH Program Gaspard Monge in
	optimization and operations research. The research of Shoham Sabach was partially supported by the
	German Israel Foundation, GIF Grant G-1243-304.6/2014. The research of Marc Teboulle was partially
	supported by the Israel Science Foundation, ISF Grants 998/12 and 1844/16, and the German Israel
	Foundation, GIF Grant G-1243-304.6/2014.

\bibliographystyle{informs2014}	

\end{document}